\numberwithin{equation}{section}
\theoremstyle{plain}
\newtheorem{theorem}{Theorem}
\newtheorem{conjecture}[theorem]{Conjecture}
\newtheorem{lemma}[theorem]{Lemma}
\newtheorem{corollary}[theorem]{Corollary}
\newtheorem{proposition}[theorem]{Proposition}
\newtheorem*{theorem*}{Theorem}
\newtheorem*{conjecture*}{Conjecture}
\theoremstyle{definition}
\newtheorem{remark}[theorem]{Remark}
\newtheorem{example}[theorem]{Example}
\newtheorem*{definition}{Definition}
\newtheorem{problem}[theorem]{Problem}
\newcommand{\CC}{{\mathbb C}}
\newcommand{\FF}{{\mathbb F}}
\newcommand{\RR}{{\mathbb R}}
\newcommand{\ZZ}{{\mathbb Z}}
\newcommand{\NN}{{\mathbb N}}
\newcommand{\calO}{{\mathcal O}}
\newcommand{\calP}{{\mathcal P}}
\newcommand{\calB}{{\mathcal B}}
\newcommand{\calX}{{\mathcal X}}
\newcommand{\calY}{{\mathcal Y}}
\newcommand{\calD}{{\mathcal D}}
\newcommand{\calS}{{\mathcal S}}
\newcommand{\eps}{\varepsilon}
\newcommand{\funktion}[5]{\begin{array}{rccl} {#1}: & {#2} &
\longrightarrow & {#3}\\ & {#4} & \longmapsto & {#5} \end{array}}
\newcommand{\on}[1]{\operatorname{#1}}
\title[Distinguished bases]{Distinguished bases and monodromy of complex hypersurface singularities}
\author{Wolfgang Ebeling}
\thanks{Partially supported by DFG}
\address{Institut f\"ur Algebraische Geometrie, Leibniz Universit\"at Hannover, Postfach 6009, D-30060 Hannover, Germany}
\email{ebeling@math.uni-hannover.de}
\subjclass[2010]{14D05, 32S50, 58K10}
\date{}
\begin{document}

\maketitle


\begin{abstract} 
We give a survey on some aspects of the topological investigation of isolated singularities of complex hypersurfaces by means of Picard-Lefschetz theory.  We focus on the concept of distinguished bases of vanishing cycles and the concept of monodromy. 
\end{abstract}

\tableofcontents

\section*{Introduction}
The pioneering fibration theorem of J.~Milnor \cite{Mi68} opened the way to study the topology of isolated complex hypersurface singularities. To study the topology of real smooth manifolds one can use Morse theory. The idea of Morse theory is that the topological type of the level set of a real function changes when passing through a critical value. In order to study the topology of the singularity defined by a complex analytic function one can investigate the level sets of this function. The complex analogue of Morse theory is Picard-Lefschetz theory. It is older than Morse theory and goes back to E.~Picard and S.~Simart \cite{PS97} and to S.~Lefschetz \cite{Lef24}. 

Around 1967--1969, the Picard-Lefschetz theory experienced a revival when it was brought into an algebraic form by A.~Grothendieck, P.~Deligne, and N.~Katz in \cite{DK73}. On a more modest scale,
the theory was applied in the late 1960s and early 1970s to the analysis of isolated singularities of complex hypersurfaces. The first fundamental contributions were made by F.~Pham \cite{Ph65}, L\^e D\~ung Tr\'ang \cite{Le73, Le78}, E.~Brieskorn \cite[Appendix]{Br70}, K.~Lamotke \cite{Lam75}, and A.~M.~Gabrielov \cite{Ga73, Ga74a, Ga74b, Ga79}. Gabrielov coined the notion of ``distinguished bases''. Instead of passing through a critical value, the fundamental principle of Picard-Lefschetz theory is going around a critical value in the complex plane. Roughly speaking, to the critical values there corresponds a distinguished basis of vanishing cycles and the change of the topology of the level set is given by the ``monodromy''. This article is a survey of these fundamental concepts and the further developments.

Nowadays, there are good references for this subject. There is a survey article by S.~M.~Gusein-Zade \cite{GZ77} and a later one by Brieskorn  \cite{Br88}. A very good reference is the second volume of the book of V.~I.~Arnold, Gusein-Zade, and A.~Varchenko \cite{AGV88}. The book of E.~Looijenga \cite{Loo84} is devoted to isolated complete intersection singularities, but it also contains relevant information about hypersurface singularities which are a special case. Moreover, there are also textbooks by D.~B\"attig and H.~Kn\"orrer \cite{BK91} (in German) and by the author \cite{Eb07}. The author has already written a survey on the classical monodromy \cite{Eb06}. We keep the intersection with this survey to a minimum. We give almost no proofs, but provide precise references to these books as well as to the original articles for details, including proofs. 

Let me outline the contents of this article. In the first section, we introduce the notion of a distinguished basis of vanishing cycles. More precisely, we define distinguished and weakly distinguished bases. In the second section, we consider the intersection form, the classical monodromy, and the Seifert form and we show how matrices of these invariants with respect to distinguished bases are related to one another. Moreover, we define the concept of Coxeter-Dynkin diagram. In Sect.~\ref{sect:braid}, we consider the change of basis and introduce the action of the braid group on the set of distinguished bases. In Sect.~\ref{sect:comp}, we collect together results about the computation of intersection matrices and Seifert matrices with respect to distinguished bases. In Sect.~\ref{sect:LL}, we discuss the implication of the irreducibility of the discriminant to properties of the invariants and we introduce the Lyashko-Looijenga map. In Sect.~\ref{sect:special}, we review Arnold's classification of singularities and compile explicit results for the simple, unimodal, and bimodal singularities. Sect.~\ref{sect:mono} is devoted to an algebraic description of the monodromy group. Finally, in Sect.~\ref{sect:top}, we consider the question to which extent the invariants determine the topological type of the singularities. We conclude with some open problems.

The notion of distinguished bases can also be generalized to isolated complete intersection singularities,  see \cite{Eb87}. We shall not discuss this case in this survey, we restrict ourselves to isolated complex hypersurface singularities. 

There are many further generalizations and applications of the theory, even outside of singularity theory. We mention some of the results, but mainly indicate references to the corresponding articles. We do not claim to be complete. 

\noindent {\bf Acknowledgements.} The author is grateful to C.~Hertling for drawing his attention to an error in his book \cite{Eb07} (see the proof of Theorem~\ref{thm:SLH}) and for useful discussions.

\section{Distinguished bases of vanishing cycles} \label{sect:dist}

Let $f:(\CC^{n+1},0) \to (\CC,0)$ be the germ of a holomorphic function with an isolated singularity at the
origin. This means that 
\[ {\rm grad} f(a)= \left(\frac{\partial f}{\partial z_0}(a), \ldots , \frac{\partial f}{\partial z_{n}}(a) \right)  \neq 0
\]
for all points $a \neq 0$ in a small neighborhood of the origin, $(z_0, \ldots, z_{n})$ denote the coordinates of $\CC^{n+1}$. For short, we call $f$ a singularity.

One has the famous result of Milnor \cite{Mi68}:
Let $\eps
>0$ be small enough such that the closed ball $B_\eps \subset \CC^{n+1}$ of
radius $\eps$ around the origin in $\CC^{n+1}$ intersects the fiber $f^{-1}(0)$
transversely. Let $0 < \eta \ll \eps$ be such that for $t$ in the closed disc
$\Delta \subset \CC$ of radius $\eta$ around the origin, the fiber $f^{-1}(t)$
intersects the ball $B_\eps$ transversely. Let 
\begin{eqnarray*} 
X_t & := & f^{-1}(t) \cap B_\eps  \mbox{ for } t \in \Delta,\\ 
X & := & f^{-1}(\Delta) \cap B_\eps, \\ 
X^\ast& := & X \setminus X_0,\\ 
\Delta^\ast & := & \Delta \setminus \{ 0\}.
\end{eqnarray*} 
By a result of J.~Milnor \cite{Mi68}, the mapping
$f|_{X^\ast} : X^\ast \to \Delta^\ast$ is the projection of a (locally trivial) ($C^\infty$)-differentiable fiber bundle.
The fiber $X_\eta$ over the point $\eta \in \Delta^\ast$ is a $2n$-dimensional differentiable manifold with
boundary which has the homotopy type of a bouquet of $\mu$ $n$-spheres where $\mu$ is the Milnor number of the
singularity. This differentiable fiber bundle $(X^\ast,f|_{X^\ast},\Delta^\ast,X_\eta)$ is called the {\em
Milnor fibration}\index{Milnor fibration}\index{fibration!Milnor} and the typical fiber $X_\eta$ is called the {\em Milnor fiber}\index{Milnor fiber}\index{fiber!Milnor}. The only non-trivial reduced homology group is the group $\widetilde{H}_n(X_\eta; \ZZ)$. It is equipped with the intersection form $\langle \ , \ \rangle$. This bilinear form is symmetric if $n$ is even and skew-symmetric if $n$ is odd. We shall only consider homology with integral coefficients and we shall write $\widetilde{H}_n(X_\eta)$ for $\widetilde{H}_n(X_\eta; \ZZ)$ in the sequel.

\begin{definition} The group $\widetilde{H}_n(X_\eta)$ together with the intersection form $\langle \ , \ \rangle$ is called the {\em Milnor lattice}\index{Milnor lattice}\index{lattice!Milnor} of $f$ and denoted by $M$.
\end{definition}

The Milnor lattice $M$ is a lattice, i.e., a free $\ZZ$-module of finite rank equipped with a symmetric or skew-symmetric bilinear form $\langle \ , \ \rangle$. The rank of the Milnor lattice is the Milnor number $\mu$.

Let  $\omega$ be the loop
\[ \begin{array}{lccc} \omega : & [0,1] & \to & \CC \\
                                                & t & \mapsto & \eta e^{2 \pi \sqrt{-1} t} .
                                                \end{array}
\]
Then parallel translation along this path induces a diffeomorphism $h=h_\omega: X_\eta \to X_\eta$ which is called the {\em geometric monodromy}\index{geometric monodromy}\index{monodromy!geometric} of the
singularity $f$. 

\begin{definition}
The induced homomorphism $h_\ast :M\to M$ on the Milnor lattice $M$ is called the {\em (classical)
monodromy}\index{classical monodromy}\index{monodromy!classical} (or the {\em (classical)
monodromy operator})\index{classical monodromy operator}\index{operator!classical monodromy} of the singularity $f$.
\end{definition}

Our aim is to study the Milnor fibration, the Milnor lattice $M$, and the monodromy.

For this purpose, we shall consider a morsification of the function $f$. This is defined as follows. An {\em unfolding}\index{unfolding} of $f$ is a holomorphic function germ $F:(\CC^{n+1} \times \CC^k,0) \to (\CC,0)$ with $F(z,0)=f(z)$ (see \cite[1.2]{Gr19}). A {\em morsification}\index{morsification} is a representative $F: V \times U \to \CC$ of an unfolding
\[ \begin{array}{lccc} F : & (\CC^{n+1} \times \CC,0) & \to & (\CC,0) \\
                                      & (z,\lambda) & \mapsto & f_\lambda(z) 
                                      \end{array}
\]   
of $f$ such that for almost all $\lambda \in U \setminus \{ 0\}$ (everywhere except from a Lebesgue null set) the function $f_\lambda : V \to \CC$ is a Morse function, i.e., has only non-degenerate critical points with distinct critical values. The Morse function $f_\lambda$ is itself often called a morsification of $f$. One can show that $f$ has a morsification (see, e.g., \cite[Proposition~3.18]{Eb07}).                                   

Let $\lambda$ be chosen so that $f_\lambda$ is a Morse function. Let $Y:=f_\lambda^{-1}(\Delta) \cap B_\eps$ and $Y_t:=f_\lambda^{-1}(t) \cap B_\eps$
for $t \in \Delta$. Assume that $\lambda \neq 0$ is chosen so small that all the critical points are contained
in the interior of $Y$ and the fiber
$f_\lambda^{-1}(t)$ for $t \in \Delta$ intersects the ball
$B_\eps$ transversely. Denote the critical points by $p_1,\ldots,p_\mu$ and the critical values by
$s_1,\ldots,s_\mu$.  Assume
that $\eta \in \partial\Delta$ is a non-critical value of $f_\lambda$. 
Let $\Delta':=\Delta \setminus \{s_1,\ldots,s_\mu\}$ and $Y':=Y \cap f_\lambda^{-1}(\Delta')$.  Then the mapping
$f_\lambda|_{Y'} : Y' \to \Delta'$ is the 
projection of a differentiable fiber bundle. The fiber $Y_t$ for $t \in \Delta' \cap \Delta^\ast$ is
diffeomorphic to $X_t$. In particular, $Y_\eta$ is diffeomorphic to $X_\eta$. We therefore identify these fibers.

Let $\gamma:[0,1]\to\Delta$ be a piecewise differentiable path which connects the critical value
$s_i$ with $\eta$ and does not pass through any other critical value,
i.e.\ $\gamma(0)=s_i$, $\gamma(1)=\eta$ and $\gamma((0,1])\subset\Delta'$. 
By the complex Morse lemma there exists a neighborhood $B_i$ of the non-degenerate
critical point $p_i$ over $s_i$  and local coordinates $(z_0,\ldots,z_{n})$ centered at the point $p_i$ such that
$f_\lambda$ can be written in $B_i$ in the form
$$
f_\lambda(z_0,\ldots,z_{n})=s_i+z_0^2+\ldots+z_{n}^2
$$
and $B_i$ is a ball of radius
$\eps$ centered at $0$ in these coordinates. For sufficiently small $t>0$ the fiber ${
X}_{\gamma(t)}$ contains an $n$-sphere
$$
S(t):=\sqrt{\gamma(t)-s_i}\, S^n
$$
where
$S^n$ is the $n$-dimensional unit sphere
\[
S^n = \{ (z_0, \ldots  , z_n) \in \CC^{n+1} \, | \, {\rm Im}\, z_i=0, \ \sum z_i^2 =1 \}.
\] 
By parallel translation along $\gamma$ one obtains an $n$-sphere
$S(t)\subset { X}_{\gamma(t)}$ for each $t\in (0,1]$. For $t=0$ the sphere
$S(t)$ shrinks to the critical point $p_i$
(cf.\ Fig.~\ref{abbIII12}).
\begin{figure}
$$
\unitlength1cm
\begin{picture}(6,5.2)
\put(0.5,0.5){\includegraphics{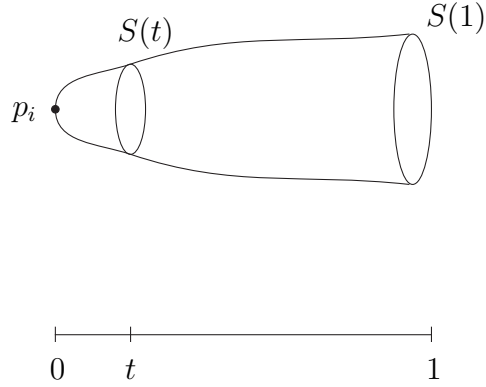}}
\put(0.5,0){$0$} \put(1.5,0){$t$} \put(5.5,0){$1$}
\put(0,3.5){$p_i$} \put(1.4,4.5){$S(t)$} \put(5.5,4.7){$S(1)$}
\end{picture}
$$
\caption{Vanishing cycle}\label{abbIII12}
\end{figure}
We now choose an orientation of $S(1)$. Then $S(1)$ is an $n$-cycle and 
represents a homology class $\delta$ in the Milnor lattice $M=\widetilde{H}_n(X_\eta)$.

\begin{definition}
The homology class $\delta\in M$ is called a {\em vanishing cycle}\index{vanishing cycle}\index{cycle!vanishing} of 
$f_\lambda$ (along $\gamma$).
Denote by  $\Lambda^* \subset M$\index{$\Lambda^*$} the set of vanishing cycles of $f$ (for all possible choices of a morsification,  a critical point, a path $\gamma$, and an orientation).
 \end{definition}

 A vanishing cycle is well defined up to orientation.

For the self-intersection number of the
vanishing cycle $\delta$ in the Milnor fiber $X_\eta$  one has the following result (see also \cite[Lemma~1.4]{AGV88}, \cite[Proposition~5.3]{Eb07}).
\begin{proposition} \label{prop:self}
The vanishing cycle $\delta \in M$ has the self-intersect\-ion number
$$
\langle \delta , \delta \rangle = (-1)^{n(n-1)/2}(1+(-1)^n)= \left\{ \begin{array}{cl} 
0 & \mbox{for $n$ odd,} \\
2 & \mbox{for $n \equiv 0$ (mod $4$),} \\
-2 &  \mbox{for $n \equiv 2$ (mod $4$).} 
\end{array} \right.
$$
\end{proposition}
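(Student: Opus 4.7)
The plan is to distinguish the parity of $n$ and, in the two cases, exploit anti-symmetry of the intersection form and the identification of self-intersection with the Euler number of the normal bundle.

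For $n$ odd I would invoke the general fact that on $H_n$ of an oriented $2n$-manifold the intersection form satisfies $\langle a,b\rangle=(-1)^n\langle b,a\rangle$. For $n$ odd this is skew-symmetric, immediately forcing $\langle\delta,\delta\rangle=0$ and matching $1+(-1)^n=0$.

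For $n$ even I would reduce to a normal-bundle computation in the local model. Since parallel transport along $\gamma$ is a diffeomorphism of fibers, $\langle\delta,\delta\rangle$ in $X_\eta$ equals the self-intersection of $S(t)$ in $X_{\gamma(t)}$ for any $t>0$, and for $t$ small the sphere $S(t)$ lies inside the Morse ball $B_i$. By the complex Morse lemma $X_{\gamma(t)}\cap B_i=\{z_0^2+\cdots+z_n^2=c\}$ with $c=\gamma(t)-s_i$; writing $z_j=u_j+iv_j$ and rescaling by $|c|^{1/2}$, this becomes the real submanifold
\[
\{(u,v)\in\RR^{n+1}\times\RR^{n+1}:|u|^2-|v|^2=1,\ \langle u,v\rangle=0\},
\]
and the map $(u,v)\mapsto(u/|u|,|u|\,v)$ is a diffeomorphism onto the total space of $TS^n$ sending $S(t)$ to the zero section. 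Hence the normal bundle of $\delta$ in the Milnor fiber is, as a real rank-$n$ bundle, isomorphic to $TS^n$. Since the self-intersection of a smoothly embedded oriented closed $n$-manifold in an oriented $2n$-manifold equals the Euler number of its normal bundle, $|\langle\delta,\delta\rangle|=|\chi(S^n)|=2$.

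The subtlest and, in my view, main step is determining the sign. The Milnor fiber is oriented by the complex structure of $\CC^{n+1}$, whereas the Euler number of $TS^n$ is computed using the canonical product orientation on $T_p(TS^n)\cong T_pS^n\oplus T_pS^n$. At a point $p$ of the zero section in the local model, the base direction is spanned by $\partial_{u_1},\ldots,\partial_{u_n}$ and the fibre direction by $\partial_{v_1},\ldots,\partial_{v_n}$, so the comparison reduces to comparing the complex orientation $du_1\wedge dv_1\wedge\cdots\wedge du_n\wedge dv_n$ with the product orientation $du_1\wedge\cdots\wedge du_n\wedge dv_1\wedge\cdots\wedge dv_n$. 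The required shuffle is a product of $\binom{n}{2}$ transpositions and contributes exactly $(-1)^{n(n-1)/2}$. Combining this with $\chi(S^n)=2$ gives the stated formula. I expect this orientation bookkeeping to be the main obstacle; an alternative is to perturb the zero section by a section of $TS^n$ with isolated non-degenerate zeros and count signed intersections directly in the complex model, letting the complex structure assign the local sign at each intersection point.
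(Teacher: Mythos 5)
Your argument is essentially the paper's own proof: both reduce to the local model $\{z_0^2+\cdots+z_n^2=\mathrm{const}\}$, identify it with the total space of $TS^n$ by a diffeomorphism sending the vanishing sphere to the zero section, obtain $1+(-1)^n$ from the Euler characteristic/Euler number of the normal bundle, and account for the factor $(-1)^{n(n-1)/2}$ as the discrepancy between the complex orientation and the base-then-fiber orientation of $TS^n$. Your explicit shuffle count of $\binom{n}{2}$ transpositions correctly justifies the sign that the paper merely asserts, and your separate skew-symmetry argument for odd $n$ is consistent with (though not needed for) the stated formula.
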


\begin{proof}
In order to compute the self-intersection number
$\langle\delta,\delta\rangle$ of the vanishing cycle $\delta$, it suffices to compute the self-intersection number of the sphere $S^n$ in the complex manifold
\[ Z = \{ (z_0, \ldots , z_n)\in \CC^{n+1} \, | \, z_0^2+ \cdots + z_n^2=1 \}.
\]
It is easy to see that the
manifold  $Z$ is diffeomorphic to the total space $TS^n$ of the tangent bundle
of the sphere $S^n$ which can be described as follows:
\[
TS^n= \left\{ u + \sqrt{-1}v \in \CC^{n+1} \, \left| \, \sum u_i^2=1, \sum u_i v_i=0 \right\} \right. .
\]
A diffeomorphism from the manifold
$Z$ to $TS^n$ can be defined by 
\[ z_i = x_i + \sqrt{-1} y_i \mapsto u_i+ \sqrt{-1} v_i = \frac{x_i}{|x|} + \sqrt{-1}  y_i
\]
where  $|x|=\sqrt{\sum x_i^2}$. This diffeomorphism sends
the unit sphere $S^n\subset Z$ to the zero section of
the tangent bundle $TS^n$. The self-intersection
number of the zero section $S^n$ in the total space of the tangent bundle $TS^n$  is equal to the Euler characteristic 
$\chi(S^n)=1+(-1)^n$.
However,  the natural
orientations of the manifolds $Z$ (as a complex analytic
manifold) and $TS^n$ (as the total space of a tangent bundle) differ by the sign $(-1)^{n(n-1)/2}$. 
\end{proof}

The path $\gamma:I\to\Delta$ defines a closed path around the critical value $s_i$ in the following way:  Let $\Delta_i$
be a disc of sufficiently small radius $\eta_i$ around $s_i$ such that $\gamma(I)$ intersects the boundary 
$\partial {\Delta}_i$ of ${\Delta}_i$ exactly once, namely at time
$t=\theta$ at the point $s_i+u_i$. Let $\tau:I\to {\Delta}_i$, $t\mapsto
s_i+u_ie^{2\pi\sqrt{-1}t}$, be the path starting at $s_i+u_i$ which goes once around $s_i$ on the boundary of 
${\Delta}_i$ in counterclockwise direction. Moreover, set
$\widetilde\gamma:=\gamma|_{[\theta,1]}$.
The closed path $\omega={\widetilde\gamma}^{-1}\tau\widetilde\gamma$ with starting and end point
$\eta$ is called the {\em simple loop}\index{simple loop}\index{loop!simple} associated to
$\gamma$ (cf.\ Fig.~\ref{abbIII13}).
\begin{figure}
$$
\unitlength1cm
\begin{picture}(6.6,3)
\put(0,0){\includegraphics{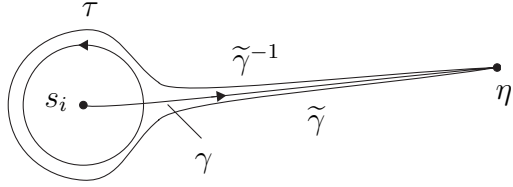}}
\put(0.5,1){$s_i$} \put(1,2.2){$\tau$} \put(2.5,0.2){$\gamma$}
\put(4,0.7){$\widetilde\gamma$} \put(3,1.5){${\widetilde\gamma}^{-1}$}
\put(6.5,1.1){$\eta$}
\end{picture}
$$
\caption{Simple loop associated to $\gamma$}\label{abbIII13}
\end{figure}
The monodromy 
$$
h_\delta:=h_{\omega *}:M\longrightarrow M
$$
corresponding to the simple loop $\omega$ associated to $\gamma$  is called the {\em
Picard-Lefschetz transfor\-ma\-tion}\index{Picard-Lefschetz transformation}\index{transformation!Picard-Lefschetz}
corresponding to the vanishing cycle $\delta$.

The following theorem is the basic result of the Picard-Lefschetz theory. It goes back to Picard and Simart \cite[p.~95ff.]{PS97} and Lefschetz \cite[Th\'eor\`eme fondamental, p.~23 \& p.~92]{Lef24}. For a proof see \cite[\S 5]{Lam75}, \cite[Chapter~3]{Loo84}, and \cite[1.3]{AGV88}. A proof following the proof in Looijenga's book \cite[Chapter~3]{Loo84} is also given in \cite[\S 5.3]{Eb07}. For a modern account of Picard-Lefschetz theory see also the article of Lamotke \cite{Lam81}.

\begin{theorem}[Picard-Lefschetz formula] \label{thm:P-L}
For $\alpha \in M$ we have
$$
h_\delta(\alpha)=\alpha-(-1)^{\frac{n(n-1)}{2}}\langle\alpha,\delta\rangle
\delta.
$$
\end{theorem}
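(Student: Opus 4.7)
The plan is to reduce the global statement about $h_\delta$ on the Milnor lattice $M$ to a local calculation in the quadratic Morse model around the critical point $p_i$. The key geometric input is that the geometric monodromy $h_\omega$ can be chosen to be the identity outside an arbitrarily small tubular neighborhood $U$ of the vanishing sphere $S(1) \subset X_\eta$. Indeed, in the factorization $\omega = \widetilde\gamma^{-1}\tau\widetilde\gamma$, parallel transport along $\widetilde\gamma^{-1}\widetilde\gamma$ can be cancelled up to isotopy, while the loop $\tau$ of small radius $\eta_i$ lies entirely within the range where $f_\lambda$ is given by its quadratic normal form in the Morse chart $B_i$. Hence the computation of $h_\delta$ on $M$ is localized to its behavior on $U$.

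Next I would prove the formula for the quadratic singularity $f(z)=z_0^2+\cdots+z_n^2$: here $\widetilde H_n(X_\eta) = \ZZ \cdot \delta$ and one has to verify that the monodromy acts as $\delta \mapsto (-1)^{n+1}\delta$. Using the diffeomorphism $X_\eta\cong TS^n$ from the proof of Proposition~\ref{prop:self}, one realizes the local monodromy explicitly as the time-$\pi$ map of a geodesic-type flow on $TS^n$ that restricts to the antipodal map on the zero section $S^n$. Since the antipodal map has degree $(-1)^{n+1}$, the local formula follows. Combined with $\langle\delta,\delta\rangle=(-1)^{n(n-1)/2}(1+(-1)^n)$ from Proposition~\ref{prop:self}, one verifies directly $(-1)^{n+1} = 1 - (-1)^{n(n-1)/2}\langle\delta,\delta\rangle$, which is precisely the Picard-Lefschetz formula applied to $\alpha=\delta$.

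To globalize, I would take any $\alpha \in M$ and represent it by a cycle transverse to $S(1)$. Since $h_\delta$ is the identity outside $U$, the class $h_\delta(\alpha)-\alpha$ is represented by a cycle supported in $U$, hence lies in the image of $\widetilde H_n(U)\cong\ZZ\cdot\delta\to M$ and equals $c\,\delta$ for some $c\in\ZZ$. To identify $c$, I would use the long exact sequence of the pair $(X_\eta,X_\eta\setminus S(1))$ together with Poincar\'e--Lefschetz duality: the coefficient is recovered via the variation homomorphism $\mathrm{Var}\colon H_n(X_\eta,\partial X_\eta)\to H_n(X_\eta)$, which in the local model sends the Poincar\'e dual of $\delta$ to $\pm\delta$. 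Pairing with the intersection form then reads off $c = -(-1)^{n(n-1)/2}\langle\alpha,\delta\rangle$, consistently with the case $\alpha=\delta$ treated above.

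The main obstacle will be the careful bookkeeping of orientations: pinning down the global sign $(-1)^{n(n-1)/2}$ requires matching conventions between the Morse chart, the diffeomorphism $Z\cong TS^n$, the complex orientation of the Milnor fiber, and the sign in the intersection pairing, and these enter precisely once through the discrepancy computed in Proposition~\ref{prop:self}. The other delicate technical point is producing a representative of $h_\omega$ strictly supported in a tubular neighborhood of $S(1)$; this is achieved either by a rescaling argument applied to the quadratic normal form or, equivalently, by invoking the symplectic description of $h_\delta$ as a generalized Dehn twist along the Lagrangian sphere $S(1)$.
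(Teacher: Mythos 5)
The paper does not actually prove Theorem~\ref{thm:P-L}: it is a survey statement with pointers to \cite[\S 5]{Lam75}, \cite[Chapter~3]{Loo84}, \cite[1.3]{AGV88}, and \cite[\S 5.3]{Eb07}. Your plan follows exactly the route of those standard proofs: localize the geometric monodromy of the simple loop to a tubular neighborhood of the vanishing sphere, compute the monodromy of the local quadratic model $z_0^2+\cdots+z_n^2$ (antipodal map on the zero section of $TS^n$, degree $(-1)^{n+1}$), and globalize via the variation homomorphism and Lefschetz duality, with the orientation discrepancy $(-1)^{n(n-1)/2}$ entering through the comparison of the complex orientation of $Z$ with the orientation of $TS^n$ as in Proposition~\ref{prop:self}. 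The architecture is correct and is the same as in the cited sources.

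There is, however, one step that does not work as you state it: you propose to pin down the constant $c$ by checking consistency with the case $\alpha=\delta$. This determines the sign only when $n$ is even. For $n$ odd one has $\langle\delta,\delta\rangle=0$ and $h_\delta(\delta)=\delta$, so the identity $(-1)^{n+1}=1-(-1)^{n(n-1)/2}\langle\delta,\delta\rangle$ reduces to $1=1$ and carries no information about the sign of the transvection. Since the formula is invariant under $\delta\mapsto-\delta$, the distinction between $h_\delta(\alpha)=\alpha-\langle\alpha,\delta\rangle\delta$ and $h_\delta(\alpha)=\alpha+\langle\alpha,\delta\rangle\delta$ (i.e.\ between $h_\delta$ and its inverse, which differ for $n$ odd) is genuine content that cannot be recovered from any self-consistency check. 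It must come from an explicit computation of $\mathrm{Var}$ on the generator of $H_n(X_\eta,\partial X_\eta)$ in the local model, with all orientations tracked; this is precisely the core of the classical proof and is the part your sketch defers to ``bookkeeping''. To close the argument you need to carry out that local computation (as in \cite[1.3]{AGV88} or \cite[Chapter~3]{Loo84}) rather than appeal to the case $\alpha=\delta$.
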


When $n$ is even, the intersection form $\langle \ , \ \rangle$ is a symmetric bilinear form and we can combine the formulas from Proposition~\ref{prop:self} and Theorem~\ref{thm:P-L} together as
\[
h_\delta(\alpha) = \alpha - \frac{2 \langle \alpha, \delta \rangle}{\langle \delta, \delta \rangle} \delta.
\]
This means that the operator $h_\delta : M \to M$ is a {\em reflection}\index{reflection} in the hyperplane of $M$ orthogonal to $\delta$. Such a reflection is also denoted by $s_\delta$, so in this case $h_\delta=s_\delta$. When $n$ is odd, the intersection form $\langle \ , \ \rangle$  is skew symmetric and Theorem~\ref{thm:P-L} means that $h_\delta$ is a {\em symplectic transvection}\index{symplectic transvection}\index{transvection!symplectic}.

We now assume that $\eps$ and $\eta$ are chosen so small that all the balls ${ B}_i$ 
and all the discs ${\Delta}_i$ are disjoint. We consider an ordered system
 $(\gamma_1,\ldots,\gamma_\mu)$ of paths
$\gamma_i:I\to\Delta$ with $\gamma_i(0)=s_i$, $\gamma_i(1)=\eta$ and
$\gamma_i((0,1])\subset\Delta'$.

\begin{definition}
The system $(\gamma_1,\ldots,\gamma_\mu)$ of paths is called {\em
 distinguished}\index{distinguished system}\index{system!distinguished} if the following conditions are satisfied:
\begin{itemize}
\item[(i)] The paths $\gamma_i$ are non-selfintersecting.
\item[(ii)] The only common point of $\gamma_i$ and $\gamma_j$ for
$i\neq j$ is $\eta$.
\item[(iii)] The paths are numbered in the order in which they arrive at $\eta$ where one has to count clockwise
from the boundary of the disc (cf.\ Fig.~\ref{abbIII14}).
\end{itemize}

A system $(\delta_1,\ldots,\delta_\mu)$ of vanishing cycles $\delta_i\in \Lambda^*$ is called {\em distinguished}, if there exists a  distinguished system 
$(\gamma_1,\ldots,\gamma_\mu)$ of paths such that
$\delta_i$ is a cycle vanishing along $\gamma_i$.
\end{definition}

\begin{figure}
$$
\unitlength1cm
\begin{picture}(5.5,5.5)
\put(0,0){\includegraphics{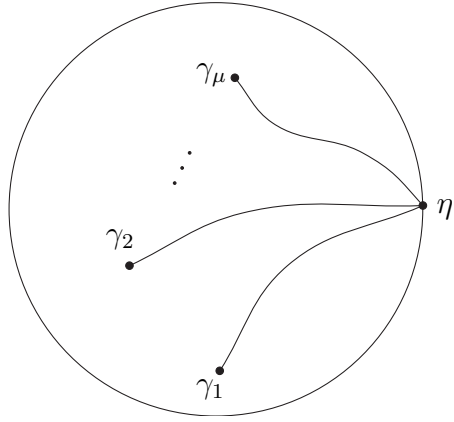}}
\put(5.7,2.7){$\eta$} \put(2.5,4.5){$\gamma_\mu$} \put(1.3,2.3){$\gamma_2$}
\put(2.5,0.3){$\gamma_1$}
\end{picture}
$$
\caption{Distinguished system of paths}\label{abbIII14}
\end{figure}

Since $\Delta'$ is a disc from which $\mu$ points have been deleted, its fundamental group $\pi_1(\Delta',\eta)$
 is the free group on $\mu$ generators.
 If $(\gamma_1,\ldots,\gamma_\mu)$ is a distinguished system of
 paths,
 then $\pi_1(\Delta',\eta)$ is the free group on the generators $\omega_1,\ldots,\omega_\mu$, where $\omega_i$ is the
 simple loop associated to $\gamma_i$.

 \begin{definition}
 The system $(\gamma_1,\ldots,\gamma_\mu)$ of paths is called
 {\em weakly distinguished}
 \index{weakly distinguished system}\index{system!weakly distinguished}
 if~$\pi_1(\Delta',\eta)$ is the free group on the generators
 $(\omega_1,\ldots,\omega_\mu)$,
 where $\omega_i$ is the simple loop belonging to $\gamma_i$.

 A system $(\delta_1,\ldots,\delta_\mu)$ of vanishing cycles $\delta_i\in \Lambda^*$
 is called {\em weakly distinguished}
 if $\delta_i$ is a vanishing cycle along a path $\gamma_i$ of a weakly distinguished system
 $(\gamma_1,\ldots,\gamma_\mu)$ of paths.
 \end{definition}

 Note that the numbering is important for a distinguished system of paths, but of no significance for a weakly distinguished system of paths.
 A distinguished system of paths is of course also weakly distinguished.
 
Brieskorn proved the following theorem \cite[Appendix]{Br70} (see also \cite[Theorem~2.1]{AGV88}, \cite[Proposition~5.5]{Eb07}).

\begin{theorem}[Brieskorn] \label{thm:distbasis}
A  distinguished system $(\delta_1,\ldots,\delta_\mu)$ of vanishing 
cycles is a basis of the lattice $M$, i.e., $\langle \delta_1, \ldots, \delta_\mu \rangle_\ZZ=M$, where $\langle \delta_1, \ldots, \delta_\mu \rangle_\ZZ$ denotes the $\ZZ$-span of $(\delta_1, \ldots, \delta_\mu)$.
\end{theorem}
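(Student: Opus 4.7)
The plan is to exploit the long exact sequence of the pair $(Y, Y_\eta)$ together with the Lefschetz thimbles associated to the distinguished system $(\gamma_1,\ldots,\gamma_\mu)$. For each $i$, I form the \emph{Lefschetz thimble} $D_i \subset Y$ as the union over $t\in[0,1]$ of the spheres $S_i(t)\subset Y_{\gamma_i(t)}$ constructed in the paragraph preceding Proposition~\ref{prop:self}, closed up by the critical point $p_i$ at $t=0$. By construction $D_i$ is an $(n+1)$-dimensional closed disc with $\partial D_i = \delta_i \subset Y_\eta$. Condition~(ii) in the definition of a distinguished system forces the $D_i$ to have pairwise disjoint interiors and to meet $Y_\eta$ only along their boundary spheres.

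Next I would identify the homotopy type of $Y$. The tree $T=\bigcup_i \gamma_i \subset \Delta$ is a deformation retract of $\Delta$, and this retraction lifts to the fiber bundle $f_\lambda : Y' \to \Delta'$ and extends across the critical values using the local Morse normal form on each ball $B_i$. The outcome is that $Y$ deformation retracts onto $Y_\eta \cup D_1 \cup \cdots \cup D_\mu$. In particular $Y$ is contractible, so the long exact sequence of $(Y,Y_\eta)$ produces a boundary isomorphism
\[
\partial : H_{n+1}(Y,Y_\eta)\ \xrightarrow{\ \cong\ }\ \widetilde{H}_n(Y_\eta)=M,\qquad \partial[D_i]=\delta_i.
\]

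Finally, I would show that $[D_1],\ldots,[D_\mu]$ form a $\ZZ$-basis of $H_{n+1}(Y,Y_\eta)$. Since the retract $Y_\eta \cup D_1 \cup \cdots \cup D_\mu$ is obtained from $Y_\eta$ by attaching the disjoint $(n+1)$-cells $D_i$ along the vanishing cycles $\delta_i$, excision yields
\[
H_{n+1}(Y,Y_\eta)\ \cong\ \bigoplus_{i=1}^\mu H_{n+1}(D_i,\partial D_i)\ \cong\ \ZZ^\mu,
\]
with the $i$-th summand generated by $[D_i]$. Combined with the isomorphism $\partial$, this exhibits $(\delta_1,\ldots,\delta_\mu)$ as a $\ZZ$-basis of $M$.

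The main obstacle is the deformation retraction of $Y$ onto $Y_\eta \cup \bigcup_i D_i$: one must glue the obvious fiberwise retraction over $\Delta'$ with the local Morse picture on each ball $B_i$ so that the two descriptions agree in the transition regions. The non-selfintersection and pairwise disjointness of the paths (conditions~(i) and~(ii)) are exactly what is needed to carry out this patching without creating spurious cells or forcing the thimbles to interfere with one another; once this geometric step is in place, the rest of the argument is formal homological algebra.
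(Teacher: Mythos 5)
Your architecture is the standard one: it is precisely the Lefschetz--thimble argument used in the references the paper cites for Theorem~\ref{thm:distbasis} (Brieskorn's appendix to \cite{Br70}, \cite[Theorem~2.1]{AGV88}, \cite[Proposition~5.5]{Eb07}); the survey itself gives no proof. The disjointness of the thimble interiors from condition~(ii), the excision computation $H_{n+1}(Y,Y_\eta)\cong\ZZ^\mu$ on the classes $[D_i]$, and the identity $\partial[D_i]=\delta_i$ are all correct and match those proofs, and your identification of the patching of the fiberwise retraction with the local Morse picture as the main technical obstacle is accurate.

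There is, however, one genuine logical gap: you deduce the contractibility of $Y$ from the deformation retraction of $Y$ onto $Y_\eta\cup D_1\cup\cdots\cup D_\mu$ (``In particular $Y$ is contractible''), and this deduction is not merely unjustified but circular. The retract is $Y_\eta$ --- a bouquet of $\mu$ spheres of dimension $n$ --- with $\mu$ cells of dimension $n+1$ attached along the classes $\delta_1,\ldots,\delta_\mu$; such a space is contractible precisely when those classes form a $\ZZ$-basis of $\widetilde{H}_n(Y_\eta)$, which is the statement you are trying to prove. Contractibility of $Y$ must be supplied as an independent input: for $\lambda$ sufficiently small, $Y=f_\lambda^{-1}(\Delta)\cap B_\eps$ is diffeomorphic to $X=f^{-1}(\Delta)\cap B_\eps$, and $X$ is contractible by Milnor's results (it deformation retracts onto the origin). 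Only with $\widetilde{H}_{n+1}(Y)=\widetilde{H}_n(Y)=0$ in hand does the long exact sequence of the pair $(Y,Y_\eta)$ make $\partial$ an isomorphism; note that both vanishing statements are needed, one for injectivity and one for surjectivity. Once this point is repaired, the rest of your argument goes through.
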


From this theorem, one can derive the following corollary (see \cite[Theorem~2.8]{AGV88}, \cite[Proposition~5.6]{Eb07}).

\begin{corollary} \label{cor:weakbasis}
A weakly distinguished system $(\delta_1,\ldots,\delta_\mu)$ of vanishing 
cycles also forms a basis of $M$. 
\end{corollary}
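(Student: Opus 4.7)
The plan is to deduce the corollary from Theorem~\ref{thm:distbasis} by exhibiting any weakly distinguished system of vanishing cycles as a $\ZZ$-unimodular transform of a distinguished one. Fix, for the same morsification, an auxiliary distinguished system of paths $(\widetilde\gamma_1,\ldots,\widetilde\gamma_\mu)$ with associated simple loops $(\widetilde\omega_1,\ldots,\widetilde\omega_\mu)$ and vanishing cycles $(\widetilde\delta_1,\ldots,\widetilde\delta_\mu)$. By Theorem~\ref{thm:distbasis}, the $\widetilde\delta_i$ form a $\ZZ$-basis of $M$.

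Now let $(\gamma_1,\ldots,\gamma_\mu)$ be the given weakly distinguished system, with simple loops $(\omega_1,\ldots,\omega_\mu)$ and vanishing cycles $(\delta_1,\ldots,\delta_\mu)$. By definition, both $(\widetilde\omega_i)$ and $(\omega_i)$ are free generating systems of the free group $\pi_1(\Delta',\eta)\cong F_\mu$. The idea is to connect $(\widetilde\gamma_i)$ to $(\gamma_i)$ by a finite sequence of elementary moves on path systems — namely, the braid-type moves systematized in Section~\ref{sect:braid}, each of which replaces one $\gamma_j$ by a path that first encircles a neighboring critical value before approaching its target. Under such a move the new vanishing cycle is obtained from the old cycles by applying a Picard-Lefschetz transformation (Theorem~\ref{thm:P-L}).

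Because every Picard-Lefschetz operator $h_\delta$ has the form $\alpha\mapsto\alpha\pm\langle\alpha,\delta\rangle\delta$ with $\delta$ itself in the system, its matrix in any basis has determinant $\pm 1$, so each elementary move replaces the current system of vanishing cycles by a $\ZZ$-unimodular transform. After finitely many such moves we reach $(\delta_1,\ldots,\delta_\mu)$, whence
\[
\langle\delta_1,\ldots,\delta_\mu\rangle_\ZZ=\langle\widetilde\delta_1,\ldots,\widetilde\delta_\mu\rangle_\ZZ=M.
\]
Since the system has length $\mu=\rk M$, it is itself a $\ZZ$-basis of $M$, which proves the corollary.

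The main technical point — and the step I would spend the most care on — is the geometric realizability in the second paragraph: showing that any free generating system of $\pi_1(\Delta',\eta)$ arising from a system of simple loops around the critical values can actually be connected to a distinguished one by elementary braid moves on paths, rather than only by abstract Nielsen transformations of $F_\mu$. This compatibility between automorphisms of the free group and concrete deformations of path systems in $\Delta'$ is the standard mechanism that lets Picard-Lefschetz theory move freely between distinguished and weakly distinguished data.
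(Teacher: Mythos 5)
Your overall strategy (compare with a distinguished system, show the passage preserves the $\ZZ$-span, invoke Theorem~\ref{thm:distbasis}) is the right one, but the step you yourself flag as ``the main technical point'' is a genuine gap, and it is the whole difficulty of your route. The claim that an arbitrary weakly distinguished path system can be connected to a distinguished one by a finite sequence of elementary geometric moves on paths (rather than merely by abstract Nielsen transformations of $\pi_1(\Delta',\eta)\cong F_\mu$) is exactly the content of Proposition~\ref{prop:wdtransitive}: it was conjectured by Gusein-Zade and only proved by Humphries in 1985, long after the corollary itself, and it is a substantially harder statement than what you are trying to prove. A weakly distinguished system of paths is in general neither non-selfintersecting nor disjoint away from $\eta$, so the braid moves $\alpha_j$, $\beta_{j+1}$ of Sect.~\ref{sect:braid} do not apply; you would need the moves $\alpha_i(j)$, $\beta_i(j)$ together with the transitivity statement, which you have not established. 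As written, the proof is therefore incomplete.

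The standard derivation avoids this realizability question entirely. Let $N=\langle\delta_1,\ldots,\delta_\mu\rangle_\ZZ$ be the span of the given weakly distinguished system. Since the simple loops $\omega_1,\ldots,\omega_\mu$ generate $\pi_1(\Delta',\eta)$, the monodromy $h_{\omega*}$ of \emph{any} loop $\omega$ is a product of the transformations $h_{\delta_i}^{\pm1}$, and by the Picard-Lefschetz formula (Theorem~\ref{thm:P-L}) each of these changes a class only by a multiple of some $\delta_i$; hence $h_{\omega*}(\alpha)\equiv\alpha \pmod{N}$ for every $\alpha\in M$ and every $\omega$. Now take any distinguished system with cycles $(\widetilde\delta_1,\ldots,\widetilde\delta_\mu)$, renumbered so that $\widetilde\gamma_j$ and $\gamma_j$ end at the same critical value. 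Two cycles vanishing along different paths into the same critical point differ, up to sign, by the monodromy of a loop, so $\widetilde\delta_j=\pm h_{\omega*}(\delta_j)\equiv\pm\delta_j\pmod{N}$, whence $\widetilde\delta_j\in N$. By Theorem~\ref{thm:distbasis} the $\widetilde\delta_j$ generate $M$, so $N=M$, and since $M$ is free of rank $\mu$ and $N$ is generated by the $\mu$ elements $\delta_i$, these form a basis. This is the argument behind the references the paper cites, and I recommend you restructure your proof along these lines; alternatively, if you insist on the moves-on-paths picture, you must explicitly invoke (or prove) Proposition~\ref{prop:wdtransitive}, which makes the argument far heavier than necessary.
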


 \begin{definition}
 A basis $(\delta_1,\ldots,\delta_\mu)$ of $M$ is called
 {\em distinguished} (resp.\ {\em weakly distinguished}) 
 \index{distinguished basis}\index{basis!distinguished}
 \index{weakly distinguished basis}\index{basis!weakly distinguished}
 if  $(\delta_1,\ldots,\delta_\mu)$ is a distinguished (resp.~weakly distinguished) system of vanishing cycles.
 \end{definition}

 By Theorem~\ref{thm:distbasis} and Corollary~\ref{cor:weakbasis} every distinguished or weakly distinguished system of vanishing cycles forms a
 basis.

 The concepts \lq\lq distinguished\rq\rq\ and \lq\lq weakly distinguished\rq\rq\
 are due to Gabrielov.
 In order to distinguish both concepts better, one sometimes says, following a suggestion of Brieskorn,
 \lq\lq strongly distinguished\rq\rq\ instead of \lq\lq distinguished\rq\rq.
 The term \lq\lq geometric basis\rq\rq\index{geometric basis}\index{basis!geometric} is also used for a distinguished
 basis.
 
The group of all automorphisms of a lattice $M$, i.e., isomorphisms $M \to M$ which respect the bilinear form, will be denoted by ${\rm Aut}(M)$.

 \begin{definition}
 The image $\Gamma$ of the homomorphism
  $$
  \funktion{\rho}{\pi_1(\Delta',\eta)}{\on{Aut}(M)}{[\gamma]}{h_{\gamma
  *}}
  $$
 is called the {\em monodromy group}\index{monodromy group}\index{group!monodromy} of the singularity $f$.
\end{definition}

If $(\delta_1, \ldots , \delta_\mu)$ is a weakly distinguished basis, then the monodromy group of $f$ is generated by the Picard-Lefschetz transformations $h_{\delta_i}$ corresponding to the vanishing cycles $\delta_i$. Therefore the monodromy group of $f$ is a group with $\mu$ generators. Indeed, the monodromy group is independent of the morsification of $f$, see Theorem~\ref{thm:bif} below. 

\begin{example} \label{ex:A_k}
(For this example see also \cite[2.9]{AGV88} and \cite[Example~5.4]{Eb07}.)
We consider the function $f: \CC \to \CC$ with $f(z)=z^{k+1}$. (This is the singularity $A_k$, see Sect.~\ref{sect:special}.) The Milnor fiber $X_\eta$ consists of $k+1$ points, namely the $(k+1)$-th roots of $\eta$. As a morsification of $f$ we consider the function $f_\lambda(z)=z^{k+1}-\lambda z$ for  $\lambda \neq 0$. Fix $\lambda \in \RR$, $\lambda > 0$. The critical points of the function $f_\lambda$ are given by the equation
\[ f'_\lambda(z)= (k+1)z^k - \lambda =0. 
\]
Therefore they are the points
  $$
  p_i=\sqrt[k]{\frac{\lambda}{k+1}}\xi_i,\quad \xi_i=e^{-\frac{2\pi
  i\sqrt{-1}}{k}},
  $$
 with the critical values
  $$
  s_i=-\frac{\lambda k}{k+1}\sqrt[k]{\frac{\lambda}{k+1}}\xi_i,\quad
  i=1,\ldots,k.
  $$

 As a noncritical value we choose $-\eta$, where $\eta\in\RR$, $\eta>0$ and
  $$ \eta \gg \frac{\lambda k}{k+1}\sqrt[k]{\frac{\lambda}{k+1}}.
  $$
Let $\gamma_i:[0,1]\to\bar\Delta$, $t\mapsto (1-t)s_i$,
 and let $\tau$ be a path from  $0$ to $-\eta$ which runs along the real axis and goes once around the critical value
  $$
  s_k=-\frac{\lambda k}{k+1}\sqrt[k]{\frac{\lambda}{k+1}}\xi_k\in\RR
  $$
 in the positive direction.

We consider the path system $(\gamma_1\tau,\ldots,\gamma_k\tau)$.
This system is homotopic to a  distinguished path system. (For the notion of homotopy of path systems see Sect.~\ref{sect:braid} below.)
 Let $(\delta_1,\ldots,\delta_k)$ be a corresponding  distinguished system of vanishing cycles
 in ${\widetilde H}_0(X_{-\eta})$. 
 
 In order to compute the intersection numbers $\langle\delta_i,\delta_j\rangle$ of the vanishing cycles
 in ${\widetilde H}_0(X_{-\eta})$ we transport the system $(\delta_1,\ldots,\delta_k)$
 by parallel transport along the path $\tau^{-1}$ to ${\widetilde H}_0(X_{0})$.
 We thus consider a system of vanishing cycles in ${\widetilde H}_0(X_{0})$, which we again denote by
 $(\delta_1,\ldots,\delta_k)$, and which is defined by the path system $(\gamma_1,\ldots,\gamma_k)$.

 The fiber $X_0$ consists of the $k+1$ points
  $$
  x_0=0,x_1=\sqrt[k]{\lambda}\xi_1,\ldots,x_k=\sqrt[k]{\lambda}\xi_k.
  $$
 Then up to orientation  $\delta_i$ is represented by the cycle $x_i-x_0$. It is easy to calculate that $x_i-x_0$ vanishes along
 $\gamma_i$, i.e.,  that the points $x_i$ and $x_0$ fall together along
 $\gamma_i$. Let
  $$
  \delta_i=[x_i-x_0],\ i=1,\ldots,k.
  $$
 Then
  $$
  \langle\delta_i,\delta_j\rangle=\left\{\begin{array}{cl} 2 & \text{ for }i=j,\\
  1 & \text{ for }i\neq j. \end{array}\right.
  $$
In this case, the Milnor lattice $M$, the set of vanishing cycles $\Lambda^*$, and the monodromy group $\Gamma$ can be described as follows. Let $e_1, \ldots , e_{k+1}$ be the standard basis of $\RR^{k+1}$ and $\langle \ , \ \rangle$ the Euclidean scalar product on $\RR^{k+1}$. Denote by $\calS_{k+1}$ the symmetric group in $k+1$ symbols. Then
\begin{eqnarray*}
 M & =& \{ (v_1, \ldots , v_{k+1}) \in \ZZ^{k+1} \, | \, v_1 + \cdots + v_{k+1} =0 \}, \\
\Lambda^\ast & = & \{ e_i - e_j \, | \, 1 \leq i,j \leq k+1, i \neq j \} = \{ v \in M \, | \, \langle v,v \rangle =2 \},\\
\Gamma & = & \calS_{k+1}.
\end{eqnarray*}
 \end{example}
 
\section{Coxeter-Dynkin diagram and Seifert form} \label{sect:CD}

\begin{definition}
 Let $(\delta_1,\ldots,\delta_\mu)$ be a weakly distinguished basis of $M$.
 The matrix
  $$
  S:=(\langle\delta_i,\delta_j\rangle)_{j=1,\ldots,\mu}^{i=1,\ldots,\mu}
  $$
 is called the {\em intersection matrix}\index{intersection matrix}\index{matrix!intersection} of $f$ with respect to
 $(\delta_1,\ldots,\delta_\mu)$.
 \end{definition}

 By Proposition~\ref{prop:self}, the diagonal entries of the intersection matrix
 satisfy
  $$
  \langle\delta_i,\delta_i\rangle=(-1)^{\frac{n(n-1)}{2}}(1+(-1)^n)\text{ for all }i.
  $$

 It is usual to represent the intersection matrix by a graph called the
 Coxeter-Dynkin diagram.

\begin{sloppypar}

 \begin{definition}
 Let $(\delta_1,\ldots,\delta_\mu)$ be a weakly distinguished basis of $M$.
 The {\em Coxeter-Dynkin diagram}\index{Coxeter-Dynkin diagram}\index{diagram!Coxeter-Dynkin}
 of the singularity $f$ with respect to $(\delta_1,\ldots,\delta_\mu)$ is the graph $D$ defined as follows:

 (i) The vertices of $D$ are in one-to-one correspondence with the elements
 $\delta_1,\ldots,\delta_\mu$.

 (ii) For $i<j$ with $\langle\delta_i,\delta_j\rangle\neq0$  the $i$-th and the $j$-th vertex are connected by $|\langle\delta_i,\delta_j\rangle|$
 edges, weighted with the sign $+1$ or $-1$ of
 $\langle\delta_i,\delta_j\rangle\in\ZZ$.
 We indicate the weight
  $$
  w=\left\{\begin{array}{cl} (-1)^{\frac n2} & \text{ for }n\text{ even,}\\
  (-1)^{\frac{n+1}{2}} & \text{ for }n\text{ odd}\end{array}\right.
  $$
 by a dashed line, the weight $-w$ by a solid line.
 \end{definition}
 
 \end{sloppypar}
 
 These diagrams are usually called Dynkin diagrams. However, according to A.~J.~Coleman \cite[p.~450]{Co89}, they first appeared in mimeo\-graphed notes written by H.~S.~M.~Coxeter (around 1935). Therefore we call them Coxeter-Dynkin diagrams.
 
 \begin{example} \label{ex:A_kCD}
 We continue Example~\ref{ex:A_k}. The Coxeter-Dynkin diagram with respect to $(\delta_1, \ldots , \delta_k)$ is a complete graph with only dashed edges (i.e., each two vertices are joined by a dashed edge).
 \end{example}

 If $(\delta_1,\ldots,\delta_\mu)$ is a distinguished basis then the classical monodromy operator of
$f$ can be expressed as follows:
$$h_\ast = h_{\delta_1}\cdots h_{\delta_\mu}.$$ 
We call this product the {\em Coxeter element}\index{Coxeter element}\index{element!Coxeter}  corresponding to the distinguished basis. This follows from the fact that the loop $\omega$ corresponding to $h_\ast$ is
homotopic to the combination $\omega_\mu \omega_{\mu-1}\cdots \omega_1$ 
of the simple loops associated to $h_{\delta_\mu},h_{\delta_{\mu-1}},\ldots,h_{\delta_1}$.

We have the following algebraic proposition (cf.\ \cite[Ch.~V, \S 6, Exercice~3]{Bou02}).

\begin{proposition} \label{prop:Bou}
Let $M$ be a free $\ZZ$-module of rank $\ell$ with a basis $(e_1, \ldots, e_\ell)$ and $A=(a_{ij})$ an $\ell \times \ell$-matrix with integral coefficients. Consider the operator $s_i : M \to M$ defined by
\[
s_i(e_j)=e_j -a_{ij}e_i
\]
and let $c=s_1 \cdots s_\ell$.
Let $C$ be the matrix of $c$ with respect to the basis $(e_1, \ldots, e_\ell)$, $I$ the $\ell \times \ell$ unit matrix, and let $U=(u_{ij})$ and $V=(v_{ij})$ be the matrices defined by
\[
u_{ij} = \left\{ \begin{array}{ll} a_{ij} & \mbox{if } i<j, \\ 0 & \mbox{otherwise}, \end{array} \right. \quad
v_{ij} = \left\{ \begin{array}{ll} 0 & \mbox{if } i<j, \\ a_{ij} & \mbox{otherwise}. \end{array} \right.
\]
Then
\[
 C =(I+U)^{-1}(I-V).
 \]
\end{proposition}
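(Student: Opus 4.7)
The plan is to reformulate the claim as $(I+U)C = I-V$, where $C = S_1\cdots S_\ell$ is the matrix of $c$, and then verify this entry by entry by computing $Ce_k$ column by column. Writing $r_i = (a_{i1},\ldots,a_{i\ell})$ for the $i$-th row of $A$, the matrix $S_i$ of $s_i$ in the basis $(e_1,\ldots,e_\ell)$ equals $I - e_i r_i$. The crucial elementary observation is that applying $s_m$ to an arbitrary vector $w = \sum_j w_j e_j$ alters only its $m$-th coordinate:
\[
s_m(w) \;=\; w - \Bigl(\sum_j a_{mj}w_j\Bigr)e_m.
\]

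To compute $Ce_k$, I would apply $s_\ell, s_{\ell-1},\ldots,s_1$ to $e_k$ successively, setting $w^{(m)} := s_m s_{m+1}\cdots s_\ell\, e_k$, so that $w^{(\ell+1)} = e_k$ and $w^{(1)} = Ce_k$. By the key observation, the $j$-th coordinate of $w^{(m)}$ is $\delta_{jk}$ for $m > j$ (not yet touched) and coincides with its final value $C_{jk}$ for $m \le j$ (already settled). Feeding this into the update rule $w^{(j)}_j = w^{(j+1)}_j - \sum_i a_{ji}w^{(j+1)}_i$ yields
\[
C_{jk} \;=\; \delta_{jk} - \sum_{i\le j} a_{ji}\delta_{ik} - \sum_{i>j} a_{ji}C_{ik},
\]
which I would rearrange as
\[
C_{jk} + \sum_{i>j} a_{ji}C_{ik} \;=\; \delta_{jk} - a_{jk}\,[j\ge k],
\]
where $[j\ge k]$ equals $1$ when $j\ge k$ and $0$ otherwise.

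The last step is to recognize both sides as matrix entries. Since $u_{ji} = a_{ji}$ precisely when $i > j$, the left-hand side is $[(I+U)C]_{jk}$; by definition of $V$, the right-hand side is $[I-V]_{jk}$. This establishes $(I+U)C = I-V$, and since $I+U$ is upper unitriangular it is invertible over $\ZZ$, yielding $C = (I+U)^{-1}(I-V)$.

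The only subtlety is careful index bookkeeping — splitting the coordinates of $w^{(m)}$ into an ``unchanged'' block ($j\ge m$) and a ``settled'' block ($j<m$) — but this is immediate from the one-coordinate-at-a-time nature of each $s_m$, so there is no real obstacle; the argument is essentially a direct calculation.
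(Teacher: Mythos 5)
Your proof is correct. Note that the paper gives no proof of Proposition~\ref{prop:Bou} at all --- it only cites Bourbaki (Ch.~V, \S 6, Exercice~3) --- so there is nothing to compare against; your column-by-column computation, exploiting that each $s_m$ alters only the $m$-th coordinate and hence that coordinate $j$ of the partial product $w^{(m)}$ is either still $\delta_{jk}$ (for $m>j$) or already at its final value $C_{jk}$ (for $m\le j$), is exactly the standard argument for this identity, and every step (the recurrence, its identification with $[(I+U)C]_{jk}=[I-V]_{jk}$, and the invertibility of the unitriangular matrix $I+U$ over $\ZZ$) checks out under the column convention for the matrix of $c$, which is the convention the stated formula requires.
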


Let $S_\eps^{2n+1}$ be the boundary of the ball $B_\eps$. The set $K = f^{-1}(0) \cap S_\eps^{2n+1}$ is called the {\em link}\index{link} of the singularity $f$. Let $T$ be an (open) tubular neighborhood of $K$ in $S_\eps^{2n+1}$. Milnor \cite{Mi68} has shown that the map
$$
  \funktion{\Phi}{S_\eps^{2n+1}\setminus T}{S^1\subset\CC}{z}
  {\frac{f(z)}{|f(z)|}}
  $$
is the projection of a differentiable fiber bundle. Moreover, this fibration is equivalent to the restriction of the fibration $f|_{X^\ast} : X^\ast \to \Delta^\ast$ to the boundary $S^1_{\eta}$ of $\overline{\Delta}$ \cite[\S 5]{Mi68}. In particular, the fiber $Z_{w/|w|} := \Phi^{-1}(w/|w|)$ is diffeomorphic to $X_w$ for $w \in S^1_\eta$. Let $g_t: Z_1 \to Z_{e^{2 \pi i t}}$ be the parallel transport along $\omega(t) = e^{2 \pi i t}$. For the definition of the linking number see \cite[2.3]{AGV88}, \cite[4.7]{Eb07}.
\begin{definition}
The {\em Seifert form}\index{Seifert form}\index{form!Seifert} of $f$ is the bilinear form $L$ on $\widetilde{H}_n(Z_1) \cong \widetilde{H}_n(X_\eta)$ defined by $L(a,b)=l(a, g_{1/2 *}(b))$ where $l( \ , \ )$ is the linking number.
\end{definition}

Let $(\delta_1,\ldots,\delta_\mu)$ be a distinguished basis of $f$ and let
\begin{itemize}
\item $S:=(\langle\delta_i,\delta_j\rangle)_{j=1,\ldots,\mu}^{i=1,\ldots,\mu}$ be the intersection matrix,
\item $L:=(L(\delta_i,\delta_j))_{j=1,\ldots,\mu}^{i=1,\ldots,\mu}$ be the matrix of the Seifert form, and
\item $H$ be the matrix of the monodromy $h_\ast$ with respect to the basis $(\delta_1,\ldots,\delta_\mu)$.
\end{itemize}
Then one has the following theorem.
\begin{theorem} \label{thm:SLH}
The following holds:
\begin{itemize}
\item[{\rm (i)}] The matrix $L$ is a lower triangular matrix with $-(-1)^{n(n-1)/2}$ on the diagonal.
\item[{\rm (ii)}] 
$S = - L - (-1)^n L^t$.
\item[{\rm (iii)}] 
$H= (-1)^{n+1} L^{-1} L^t$.
\end{itemize}
\end{theorem}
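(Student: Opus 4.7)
My plan is to prove the three statements in the order (ii), (i), (iii): parts (i) and (ii) carry the geometric content, while (iii) will follow formally from them via Proposition~\ref{prop:Bou}.

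For (ii) I would establish a general identity between the intersection form and the Seifert form on $\widetilde{H}_n(X_\eta)$. Given $a,b\in\widetilde{H}_n(X_\eta)$, one compares the intersection $\langle a,b\rangle$, computed inside $X_\eta$, with the linking numbers obtained by pushing $b$ slightly into the neighbouring fibres $X_{\eta e^{\pm i\pi}}$ via the parallel transports $g_{\pm 1/2}$. Inside $S_\eps^{2n+1}$ the two push-offs lie on opposite sides of $X_\eta$, and a standard deformation argument expresses $\langle a,b\rangle$ as a combination $-L(a,b)-(-1)^n L(b,a)$ after rewriting the linking of $a$ with a negative push-off of $b$ as the linking of $b$ with a positive push-off of $a$ and collecting the sign $(-1)^n$ coming from the skew-symmetry of linking in $S^{2n+1}$. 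A detailed version of this computation appears in \cite[\S 2.3]{AGV88} and \cite[Ch.~5]{Eb07}.

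For (i) I would use the explicit geometric realisation of a distinguished basis. The cycle $\delta_i$ sits as the middle sphere of the Lefschetz thimble lying above the path $\gamma_i$, and since the paths meet only at the common endpoint $\eta$, the thimbles can be made mutually disjoint outside an arbitrarily small neighbourhood of $\eta$. The half-monodromy $g_{1/2}$ carries $X_\eta$ to $X_{-\eta}$ along the arc $t\mapsto\eta e^{\pi it}$, so $g_{1/2\ast}\delta_j$ is represented by a sphere sitting above the rotated half-path. The clockwise order of arrival of the $\gamma_i$ at $\eta$ then ensures that for $i<j$ the representatives of $\delta_i$ and $g_{1/2\ast}\delta_j$ are separable in $S_\eps^{2n+1}$, so that $L(\delta_i,\delta_j)=0$. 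For the diagonal I would pass to the local Morse chart $f_\lambda=s_i+z_0^2+\cdots+z_n^2$, where $\delta_i$ is the standard real sphere $S^n\subset TS^n$ and $g_{1/2}$ acts by multiplication by $\sqrt{-1}$; the resulting self-linking computation produces $-(-1)^{n(n-1)/2}$ after the orientation reconciliation already seen in the proof of Proposition~\ref{prop:self}.

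Finally, for (iii) I would apply Proposition~\ref{prop:Bou} with $a_{ij}:=(-1)^{n(n-1)/2}\langle\delta_j,\delta_i\rangle$, so that the Picard-Lefschetz formula (Theorem~\ref{thm:P-L}) identifies each $s_i$ with $h_{\delta_i}$ and the Coxeter factorisation $h_\ast=h_{\delta_1}\cdots h_{\delta_\mu}$ gives $H=(I+U)^{-1}(I-V)$. By (ii) the matrix $A=(a_{ij})$ is a linear combination of $L$ and $L^t$, and since (i) pins down where the diagonals of $L$ and $L^t$ sit, one can read off $I+U$ and $I-V$ as scalar multiples of $L^t$ and $L$ respectively; collecting the signs using $((-1)^{n(n-1)/2})^2=1$ yields the claimed identity $H=(-1)^{n+1}L^{-1}L^t$. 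The main obstacle is step (i): the vanishing of the entries above the diagonal rests on a global isotopy argument in $S_\eps^{2n+1}$ exploiting the clockwise-ordering of a distinguished system, and the diagonal sign needs a careful orientation computation in the quadratic model. The sign-and-transpose bookkeeping in step (iii) is also precisely the place where the error in \cite{Eb07} acknowledged by the author needs to be corrected.
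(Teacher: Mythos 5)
Your proposal is correct and follows essentially the same route as the paper: parts (i) and (ii) are exactly the statements the paper cites from \cite[Lemma~2.5 and Theorem~2.4]{AGV88} (your geometric sketches reproduce those arguments), and part (iii) is obtained, as in the paper, by feeding the Picard--Lefschetz formula and the Coxeter factorization $h_\ast=h_{\delta_1}\cdots h_{\delta_\mu}$ into Proposition~\ref{prop:Bou}. Be aware, though, that the final step you leave as ``collecting the signs'' is precisely where the conventions bite: with $a_{ij}=(-1)^{n(n-1)/2}\langle\delta_j,\delta_i\rangle$ one finds $I+U=-(-1)^{n(n-1)/2}L^{t}$ and $I-V=(-1)^{n(n+1)/2}L$, so Proposition~\ref{prop:Bou} directly produces $(-1)^{n+1}(L^{t})^{-1}L$, and matching this with the stated form $(-1)^{n+1}L^{-1}L^{t}$ requires pinning down the operator-versus-transpose convention for $H$ (the very issue behind the acknowledged correction to \cite{Eb07}), so you should carry that computation out explicitly rather than asserting the identity.
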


\begin{proof}
(i) This is \cite[Lemma~2.5]{AGV88}. (Note that, according to \cite[Remark in 2.5]{AGV88}, the matrix of the bilinear form in \cite{AGV88} is written down as the matrix of the corresponding operator and hence corresponds to the transpose matrix in our convention. See also \cite[Corollary~5.3~(i)]{Eb07}, where, unfortunately, there is a misprint: ''upper'' should be ''lower''.)

For the proof of (ii) see \cite[Theorem~2.4]{AGV88} (see also \cite[Corollary~5.3~(ii)]{Eb07}).

(iii) follows from (i) and (ii) by applying Proposition~\ref{prop:Bou}.
\end{proof}

It follows from Theorem~\ref{thm:SLH} that each of these matrices determines the other two.  It is clear that $S$ and $L$ determine the matrix $H$. That the matrix $H$ of the classical monodromy operator with respect to a distinguished basis determines the intersection matrix $S$ was first proved by F.~Lazzeri \cite{Laz74} and follows from Theorem~\ref{thm:SLH} and a simple fact in linear algebra \cite[Lemma~2.6]{AGV88} (see also \cite[Lemma~5.5]{Eb07}).

A.~B.~Givental \cite{Gi88} introduced $q$-analogues of the invariants and formulas above thus interpolating between the symmetric and skew symmetric versions of these invariants. The  $q$-analogue of the monodromy group was studied by G.~G.~Il'yuta \cite{Il96}.

\section{Change of basis} \label{sect:braid}
A distinguished or weakly distinguished system $(\gamma_1,\ldots,\gamma_\mu)$ of paths can be chosen in many various ways.
Next we consider elementary operations on path systems which preserve the property of being distinguished or weakly distinguished.

 Let $(\gamma_1,\ldots,\gamma_\mu)$ be a  distinguished system of paths from  the critical values $s_1,\ldots,s_\mu$ to the non-critical value $\eta$
 and  let $(\delta_1,\ldots,\delta_\mu)$ be a corresponding  distinguished system of vanishing cycles.
 Furthermore, let $(\omega_1,\ldots,\omega_\mu)$ be a corresponding system of simple loops.

 \begin{definition}
 The operation $\alpha_j$\index{$\alpha_j$} for $1\leq j<\mu$ is defined as
  $$
 \alpha_j :(\gamma_1,\ldots,\gamma_\mu) \mapsto (\gamma_1, \ldots , \gamma_{j-1}, \widetilde{\gamma}_j, \widetilde{\gamma}_{j+1}, \gamma_{j+2}, \ldots, \gamma_\mu),  
  $$
  where $\widetilde{\gamma}_{j+1}= \gamma_j$ and ${\widetilde\gamma}_j$ is a small homotopic deformation of $\gamma_{j+1}\omega_j$
 such that ${\widetilde\gamma}_j$ has no self-intersection points and intersects the other paths only at~$\eta$, for $t=1$
 (see Fig.~\ref{abbIII41}).
  \begin{figure}[h]
  $$
  \unitlength1cm
  \begin{picture}(4,3)
  \put(0.5,0){\includegraphics{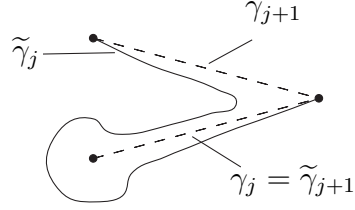}} \put(0.1,1.9){${\widetilde\gamma}_j$}
  \put(3,0.2){$\gamma_j={\widetilde\gamma}_{j+1}$} \put(3.2,2.5){$\gamma_{j+1}$}
  \end{picture}
  $$
  \caption{The operation $\alpha_j$}\label{abbIII41}
  \end{figure}

 Then $({\widetilde\gamma}_1,\ldots,{\widetilde\gamma}_\mu)$ is again a  distinguished system of paths.

This induces the following operation on the corresponding system $(\delta_1,\ldots, \delta_\mu)$ of vanishing cycles which will be denoted by the same symbol:
\[
\alpha_j: (\delta_1,\ldots, \delta_\mu) \mapsto (\delta_1, \ldots , \delta_{j-1}, h_{\delta_j}(\delta_{j+1}), \delta_j, \delta_{j+2} , \ldots, \delta_\mu)
\]
where 
$$
h_{\delta_j}(\delta_{j+1})=\delta_{j+1}-(-1)^{n(n-1)/2} \langle\delta_{j+1},\delta_j\rangle\delta_j.
$$
\end{definition}

 \begin{definition}
 The operation $\beta_{j+1}$\index{$\beta_{j+1}$} for $1\leq j<\mu$ is defined as
  $$
  \beta_{j+1}: (\gamma_1,\ldots,\gamma_\mu) \mapsto (\gamma_1, \ldots, \gamma_{j-1}, \gamma_j', \gamma_{j+1}', \gamma_{j+2}, \ldots , \gamma_\mu)
  $$
 where $\gamma_j'=\gamma_{j+1}$ and
 $\gamma_{j+1}'$ is a small homotopic deformation of $\gamma_j \omega_{j+1}^{-1}$ with the properties above (see Fig.~\ref{abbIII42}).
  \begin{figure}[h]
  $$
  \unitlength1cm
  \begin{picture}(4,3)
  \put(0.5,0){\includegraphics{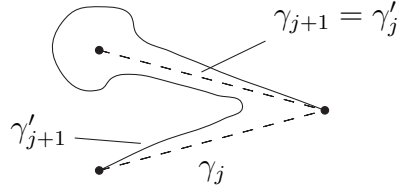}} \put(0,0.5){$\gamma_{j+1}'$}
  \put(2.5,0){$\gamma_j$} \put(3.5,2){$\gamma_{j+1}=\gamma_j'$}
  \end{picture}
  $$
  \caption{The operation $\beta_{j+1}$}\label{abbIII42}
  \end{figure}
 Then $(\gamma_1',\ldots,\gamma_\mu')$ is again a  distinguished system of paths.
 
 This induces the following operation on the corresponding system $(\delta_1,\ldots, \delta_\mu)$ of vanishing cycles which will also be denoted by the same symbol:
 \[ 
 \beta_{j+1} : (\delta_1,\ldots, \delta_\mu) \mapsto (\delta_1, \ldots , \delta_{j-1}, \delta_{j+1}, h_{\delta_{j+1}}^{-1}(\delta_j), \delta_{j+2} , \ldots, \delta_\mu)
 \]
 where 
 $$
 h_{\delta_{j+1}}^{-1}(\delta_j)=\delta_j-(-1)^{n(n-1)/2}  \langle\delta_{j+1},\delta_j\rangle\delta_{j+1}
 $$
 is the inverse Picard-Lefschetz transformation.
 \end{definition}

 Two  distinguished systems $(\gamma_1,\ldots,\gamma_\mu)$ and $(\tau_1,\ldots,\tau_\mu)$ of paths
 are called {\em homotopic}\index{homotopic}
 if there are homotopies $\phi_i:I\times I\to\bar\Delta$ between $\gamma_i$ and $\tau_i$,
 $i=1,\ldots,\mu$,
 such that for all $u\in I$ and paths $\phi_i^u:I\to \bar\Delta$, $t\mapsto \phi_i(u,t)$, $i=1,\ldots,\mu$,
 the following properties are satisfied:

 (i) $\phi_i^u(0)=s_i$, $\phi_i^u(1)=\eta$.

 (ii) The paths $\phi_i^u$ are double point free.

 (iii) Each two paths $\phi_i^u$ and $\phi_j^u$ have, for $i\neq j$, only the end point $\eta$ in common.

 One can easily show (see \cite[Lemma~6]{Eb07}):

 \begin{lemma}\label{IIIlem91}
 The operations $\alpha_j$ and $\beta_{j+1}$ are mutually inverse,
 i.e.,  the application of $\alpha_j\beta_{j+1}$ and $\beta_{j+1}\alpha_j$ to a 
 distinguished path system $(\gamma_1,\ldots,\gamma_\mu)$ yields a homotopic  distinguished path system.
 \end{lemma}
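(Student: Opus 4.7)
The plan is to verify the claim by direct computation at the level of paths, tracking carefully what the associated simple loops become after each operation. The key observation is that the simple loop $\omega_i$ associated to a path $\gamma_i$ depends only on the homotopy class of $\gamma_i$ relative to the critical value it starts at (since the loop is obtained by going once around $s_i$ in a tiny disc and then back along $\gamma_i$). So when we apply a second operation, the simple loops we need to use are those of the \emph{new} path system, but each is determined by the path itself.

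First I would compute $\alpha_j\beta_{j+1}(\gamma_1,\ldots,\gamma_\mu)$. Writing $(\gamma_j',\gamma_{j+1}')=(\gamma_{j+1},\,\gamma_j\omega_{j+1}^{-1})$ (up to the required homotopic deformation) for $\beta_{j+1}$'s output, let $\omega_j',\omega_{j+1}'$ be the simple loops associated to these new paths; by the remark above, $\omega_j'=\omega_{j+1}$ and $\omega_{j+1}'=\omega_{j+1}^{-1}\omega_j\omega_{j+1}$ (conjugation by the loop $\omega_{j+1}^{-1}$, since we prepended that loop to $\gamma_j$). Now apply $\alpha_j$: the new $j$th path is homotopic to $\gamma_{j+1}'\omega_j'=\gamma_j\omega_{j+1}^{-1}\omega_{j+1}\simeq\gamma_j$, and the new $(j+1)$th path is $\gamma_j'=\gamma_{j+1}$. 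Thus the composition yields $(\gamma_1,\ldots,\gamma_{j-1},\gamma_j,\gamma_{j+1},\gamma_{j+2},\ldots,\gamma_\mu)$ up to homotopy, as required.

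Next I would do the symmetric calculation for $\beta_{j+1}\alpha_j$. After $\alpha_j$ the paths in slots $j,j+1$ are $(\tilde\gamma_j,\tilde\gamma_{j+1})=(\gamma_{j+1}\omega_j,\gamma_j)$ up to homotopy, with associated simple loops $\tilde\omega_j=\omega_j\omega_{j+1}\omega_j^{-1}$ and $\tilde\omega_{j+1}=\omega_j$. Applying $\beta_{j+1}$ then gives $\tilde\gamma_j'=\tilde\gamma_{j+1}=\gamma_j$ in slot $j$, and $\tilde\gamma_{j+1}'\simeq\tilde\gamma_j\tilde\omega_{j+1}^{-1}=\gamma_{j+1}\omega_j\omega_j^{-1}\simeq\gamma_{j+1}$ in slot $j+1$, recovering the original system up to homotopy.

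Finally, I would note that the resulting deformations can be realised by families satisfying the three homotopy conditions (i)--(iii) for distinguished systems: this follows because each elementary deformation used in the definition of $\alpha_j$ and $\beta_{j+1}$ is already taken small enough to keep the paths double-point free and disjoint away from $\eta$, and the cancellation $\omega_j\omega_j^{-1}\simeq 1$ (resp.\ $\omega_{j+1}^{-1}\omega_{j+1}\simeq 1$) can be performed within a small neighbourhood of the respective critical disc without crossing any other path. The only thing to watch is the bookkeeping of which simple loops appear; once this is tracked correctly, the homotopy cancellation is immediate, so there is no real obstacle beyond notational care.
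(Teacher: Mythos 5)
Your argument is correct and is essentially the standard verification (the paper itself gives no proof here, deferring to \cite[Lemma~6]{Eb07}): the crucial point, which you identify, is that the simple loop invoked by the second operation is the one attached to the \emph{new} occupant of the relevant slot, so the appended loops cancel, and the cancellation is realizable by a homotopy of distinguished systems supported near the small disc. One harmless slip: with the paper's left-to-right composition the simple loop of $\gamma_j\omega_{j+1}^{-1}$ is $\omega_{j+1}\omega_j\omega_{j+1}^{-1}$ and that of $\gamma_{j+1}\omega_j$ is $\omega_j^{-1}\omega_{j+1}\omega_j$ (compare the stated effect of $\alpha_i(j)$ and $\beta_i(j)$ on simple loops), i.e.\ the opposite conjugation to what you wrote --- but since these transformed loops never enter your cancellation, nothing is affected.
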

 
 Up to homotopy of  distinguished path systems we have

 $\on{(i)}$ $\alpha_i\alpha_j=\alpha_j\alpha_i$ for $i,j$ with
 $|i-j|\geq2$,

 $\on{(ii)}$ $\alpha_j\alpha_{j+1}\alpha_j=\alpha_{j+1}\alpha_j \alpha_{j+1}$
 for $1\leq j<\mu-1$.

These are the relations of Artin's braid group \cite{Art25, Art47} (see also \cite{Bir74}). Therefore we have an action of the braid group ${\rm Br}_\mu$ on $\mu$ strings
 on the set of the homotopy classes of 
 distinguished path systems and so also on the set of all 
 distinguished systems of vanishing cycles. One can show the following result (\cite{GZ77}, see also \cite[Proposition~5.15]{Eb07}).

\begin{proposition} \label{prop:transitive}
The braid group ${\rm Br}_\mu$ acts transitively on the set of all homotopy classes of distinguished path systems, i.e., any two distinguished path systems can be transformed one to the other by iteration of the operations $\alpha_j$ and $\beta_{j+1}$ and a succeeding homotopy.
 \end{proposition}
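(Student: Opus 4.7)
My plan is to translate the statement into an algebraic one about tuples of free generators of $\pi_1(\Delta',\eta)$ and reduce it to the classical Hurwitz transitivity theorem. Given a distinguished path system $(\gamma_1,\ldots,\gamma_\mu)$, the associated simple loops $(\omega_1,\ldots,\omega_\mu)$ are a system of free generators of $\pi_1(\Delta',\eta)$, and the product $\omega_\mu\cdots\omega_1$ represents the boundary loop $\partial\Delta$. Conversely, two distinguished path systems whose loop tuples agree as elements of $\pi_1(\Delta',\eta)$ are themselves homotopic as distinguished systems: the paths joining $\eta$ to each critical value inside the corresponding simple loop are unique up to homotopy. Hence the claim is equivalent to showing that the braid group acts transitively on \emph{Hurwitz systems}, i.e.\ ordered $\mu$-tuples of simple free generators of $\pi_1(\Delta',\eta)$ with prescribed total product.

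A short computation using that $\widetilde{\gamma}_j\simeq\gamma_{j+1}\omega_j$ shows that $\alpha_j$ acts on the loop tuple by the classical Hurwitz move
\[
(\omega_1,\ldots,\omega_j,\omega_{j+1},\ldots,\omega_\mu)\;\longmapsto\;(\omega_1,\ldots,\omega_j^{-1}\omega_{j+1}\omega_j,\omega_j,\ldots,\omega_\mu),
\]
and $\beta_{j+1}$ acts by its inverse. I would prove Hurwitz transitivity by induction on $\mu$. The case $\mu=1$ is trivial. For the inductive step, given two Hurwitz systems $(\omega_i)$ and $(\omega_i')$, I first use that the image of the braid action in the symmetric group $\calS_\mu$ permuting which critical value each $\omega_i$ encircles is surjective, reducing to the case that $\omega_\mu$ and $\omega_\mu'$ encircle the same critical value. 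One then uses the fixed total product to show that the element conjugating $\omega_\mu$ to $\omega_\mu'$ lies in the free subgroup generated by $\omega_1,\ldots,\omega_{\mu-1}$, and one absorbs it by further pure braid moves so that $\omega_\mu=\omega_\mu'$. Geometrically, $\gamma_\mu$ and $\gamma_\mu'$ can then be taken homotopic; deleting them and passing to a distinguished system in $\Delta\setminus\{s_\mu\}$, which is a disc with $\mu-1$ punctures, we apply the inductive hypothesis.

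The hardest step is the absorption: after matching the critical value encircled by $\omega_\mu$, one must cancel the remaining conjugating word using pure braid moves, and the operations $\alpha_j,\beta_{j+1}$ mix the modifications of $\omega_\mu$ with those of the other $\omega_i$ in a coupled way. The key leverage is exactly the common product relation $\omega_\mu\cdots\omega_1=\omega_\mu'\cdots\omega_1'$ in $\pi_1(\Delta',\eta)$, which forces the conjugating word to lie in $\langle\omega_1,\ldots,\omega_{\mu-1}\rangle$ and lets one carry out the absorption by an inner induction on its word length, each step of which is realized by an explicit braid word (typically a conjugate of $\alpha_{\mu-1}^{\pm 2}$). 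Once this step is complete, the geometric reduction to $\mu-1$ punctures is routine.
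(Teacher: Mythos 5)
The paper itself contains no proof of Proposition~\ref{prop:transitive}; it refers to \cite{GZ77} and to \cite[Proposition~5.15]{Eb07}, where the argument is essentially geometric (any two distinguished path systems differ by a mapping class of the punctured disc rel boundary, and these mapping classes are generated by the half-twists realizing the $\alpha_j$), or, equivalently, rests on Artin's characterization of ${\rm Br}_\mu$ as the subgroup of ${\rm Aut}(\pi_1(\Delta',\eta))$ permuting the conjugacy classes of the standard generators and fixing the boundary word. Your translation into a statement about Hurwitz systems, and your identification of $\alpha_j$ with the Hurwitz move, are correct in outline (although the injectivity step --- that equal loop tuples force the path systems to be simultaneously homotopic as \emph{disjoint embedded} arc systems --- itself needs the standard ``homotopic embedded arcs are ambiently isotopic'' argument and should not be waved through).

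The genuine gap is the key lemma of your inductive step: it is \emph{false} that the product relation forces the element conjugating $\omega_\mu$ to $\omega_\mu'$ to lie in $H:=\langle\omega_1,\ldots,\omega_{\mu-1}\rangle$. Take $\mu=2$, $\pi_1(\Delta',\eta)=\langle x,y\rangle$ with boundary word $\delta=yx$, and compare $(x,y)$ with its image under $\alpha_1^4$. Since $\alpha_1^2$ (the full twist) acts on any Hurwitz system by simultaneous conjugation by $\delta$, one gets
\[
\alpha_1^4(x,y)=\bigl(\delta^{-2}x\delta^{2},\ \delta^{-2}y\delta^{2}\bigr),\qquad \omega_2'=\delta^{-2}y\delta^{2}=x^{-1}y^{-1}x^{-1}yxyx .
\]
Both systems have product $yx$ and both second entries encircle the same critical value, but the conjugators $h$ with $\omega_2'=h\,y\,h^{-1}$ form the coset $\delta^{-2}\langle y\rangle=\{x^{-1}y^{-1}x^{-1}y^{\,n-1}\}$, which is disjoint from $\langle x\rangle$ (indeed from $H\langle\omega_\mu\rangle$ for any $\mu\ge 2$, since $\delta^{-2}=u\omega_\mu^{-1}u\omega_\mu^{-1}$ with $1\ne u\in H$ is already in normal form for the free product $H*\langle\omega_\mu\rangle$). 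So the conjugator cannot be taken in $H$, and the proposed absorption by an inner induction on word length, with steps realized by conjugates of $\alpha_{\mu-1}^{\pm2}$, has no valid invariant to decrease: the set of conjugators actually realizable by pure braid moves is the image of the point-pushing subgroup at $s_k$, which is strictly larger than $H$, and identifying it (and showing every simple loop about $s_k$ is reached) is precisely the content of the geometric or Artin-type arguments your plan was meant to replace. As written, the induction does not close.
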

 
\begin{definition}
Let 
\begin{itemize}
\item $\calB$\index{$\calB$} be the set of all distinguished bases of vanishing cycles of $f$, 
\item $\calD$\index{$\calD$} be the set of Coxeter-Dynkin diagrams of distinguished bases of $f$.
\end{itemize}
\end{definition}

One also has a braid group action on the sets $\calB$ and $\calD$. 
Moreover, one can change the orientation of a cycle. Let $H_\mu$ be the direct product of $\mu$ cyclic groups of order two with generators $\kappa_1, \ldots , \kappa_\mu$, where $\kappa_i$\index{$\kappa_i$} acts on $\calB$ by
\[
\kappa_i: (\delta_1, \ldots, \delta_i , \ldots, \delta_\mu) \mapsto (\delta_1, \ldots, -\delta_i, \ldots , \delta_\mu).
\]
The braid group ${\rm Br}_\mu$ acts on $H_\mu$ by permutation of the generators $\kappa_1, \ldots , \kappa_\mu$: $\alpha_j$ corresponds to the transposition of $\kappa_j$ and $\kappa_{j+1}$. Let ${\rm Br}_\mu^\rtimes=H_\mu \rtimes {\rm Br}_\mu$\index{${\rm Br}_\mu^\rtimes$} be the semi-direct product. 
It follows from Proposition~\ref{prop:transitive} that the action of the group ${\rm Br}_\mu^\rtimes$ on $\calB$ is transitive.

The set $\calB$ depends on the chosen morsification. In order to get an invariant of the singularity, Brieskorn \cite{Br83} proposed a more general notion of distinguished bases. Namely, he considered the natural action of the monodromy group $\Gamma$ on the set $\calB$: An element $h \in \Gamma$ acts as follows:
\[
h: (\delta_1, \ldots , \delta_\mu) \mapsto (h(\delta_1), \ldots , h(\delta_\mu)).
\]
Brieskorn called a basis $B$ of $M$ {\em geometric}\index{geometric basis}\index{basis!geometric} if it is obtained by any choice of a distinguished path system, of orientations, and of $h \in \Gamma$. He introduced the notions 
\begin{itemize}
\item  $\calB^*$\index{$\calB^*$} for the set of all geometric bases of $f$,
\item $\calD^*$\index{$\calD^*$} for the set of Coxeter-Dynkin diagrams of geometric bases of $f$.
\end{itemize}
The sets $\calB^*$ and $\calD^*$ are invariants of the singularity. In fact, the set $\calD^*$ coincides with $\calD$. The action of $\Gamma$ commutes with the action of the group ${\rm Br}_\mu^\rtimes$.
It follows from Proposition~\ref{prop:transitive} that the action of the group $\Gamma \times {\rm Br}_\mu^\rtimes$ on $\calB^*$ is transitive. One can derive from this that the invariants $\calB^*$ and $\calD^*$ determine each other, see \cite{Br83}.

Note that, unfortunately, in \cite{Eb18} the set $\calB$ was considered but denoted by $\calB^*$.
 
The braid group action above first appeared in a paper of A.~Hurwitz \cite{Hur91} from 1891 where he describes a braid group action on certain sets of Riemann surfaces (cf.\ \cite{Kl88}). It was also studied by Brieskorn and his students, see \cite{Br88}. In \cite{Br88}, Brieskorn introduced a simple unifying concept, the notion of an automorphic set.

\begin{definition} An {\em automorphic set}\index{automorphic set}\index{set!automorphic} is a set $\Lambda$ with a product $\ast : \Lambda \times \Lambda \to \Lambda$ such that all left translations are automorphisms, i.e., one has the following properties:
\begin{itemize}
\item[(i)] For all $a,c \in \Lambda$ there is a unique $b \in \Lambda$ such that $a \ast b=c$.
\item[(ii)] For all $a,b,c \in \Lambda$ one has $(a \ast b) \ast (a \ast c) =a \ast (b \ast c)$.
\end{itemize}
\end{definition}

The set $\Lambda^*$ of vanishing cycles of $f$ is an automorphic set with the product $a \ast b := h_a(b)$ for $a,b \in \Lambda^*$.

If $\Lambda$ is an automorphic set, then one has a canonical braid group action on the $n$-fold cartesian product $\Lambda^n$ of $\Lambda$:
\[
\alpha_i: (x_1, \ldots, x_n) \mapsto (x_1, \ldots, x_{i-1}, x_i \ast x_{i+1}, x_i, x_{i+2}, \ldots, x_n).
\]
The concept of an automorphic set is a basic concept which is also studied under the following names: left self-distributive system, self-distributive groupoid, quandle, wrack, and rack, see, e.g., the book of P.~Dehornoy \cite{Deh00}.

This braid group action is also considered in the representation theory of algebras, see, e.g.,  \cite{CB93, KY11, Rin94}. It has also been applied in mathematical physics, see, e.g., \cite{CV93, FHHI03, FH03}.

\begin{example} \label{ex:A_kclass}
We continue Example~\ref{ex:A_kCD}. By the transformations
\[
\alpha_{k-1}, \alpha_{k-2}, \ldots , \alpha_1; \alpha_{k-1}, \alpha_{k-2}, \ldots, \alpha_2; \ldots ; \alpha_{k-1}, \alpha_{k-2}; \alpha_{k-1},
\]
the distinguished basis $(\delta_1, \ldots , \delta_k)$ is transformed to a distinguished basis  with the Coxeter-Dynkin diagram depicted in Fig.~\ref{fig:CD2}. This is the classical Coxeter-Dynkin diagram of type $A_k$.
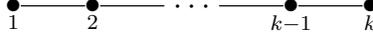
\begin{figure}[h]
$$
\xymatrix{ 
 *{\bullet} \ar@{-}[r]  \ar@{}_{1} & *{\bullet} \ar@{-}[r]  \ar@{}_{2} & \cdots &  *{\bullet} \ar@{-}[l] \ar@{-}[r]  \ar@{}_{k-1} & *{\bullet}  \ar@{}_{k}                   
 }
$$Ê
\caption{Standard Coxeter-Dynkin diagram $A_k$} \label{fig:CD2}
\end{figure}
\end{example}
 
 Finally, we consider operations that transform weakly distinguished path systems again
 into weakly distinguished path systems.

 Let $(\gamma_1,\ldots,\gamma_\mu)$ now be a weakly distinguished path system from the points $s_1,\ldots,s_\mu$ to $\eta$,
 let $(\omega_1,\ldots,\omega_\mu)$ be a corresponding system of simple loops and let
 $(\delta_1,\ldots,\delta_\mu)$ be a corresponding weakly distinguished system of vanishing cycles.

 \begin{definition}
 We define operations $\alpha_i(j)$\index{$\alpha_i(j)$} and $\beta_i(j)$ \index{$\beta_i(j)$}for
 $i,j\in\{1,\ldots,\mu\}$, $i\neq j$, as follows:
  \begin{eqnarray*}
  \alpha_i(j) & : & (\gamma_1,\ldots,\gamma_\mu) \mapsto
  (\gamma_1,\ldots,\gamma_{j-1},\gamma_j\omega_i,\gamma_{j+1},\ldots,\gamma_\mu)\, , \\
  \beta_i(j) & : & (\gamma_1,\ldots,\gamma_\mu) \mapsto
  (\gamma_1,\ldots,\gamma_{j-1},\gamma_j\omega_i^{-1},\gamma_{j+1},
  \ldots,\gamma_\mu).
  \end{eqnarray*}
 \end{definition}

 These operations induce the following operations on the corresponding systems of simple loops,
 and we denote them by the same symbols:
  \begin{eqnarray*}
  \alpha_i(j) & : &  (\omega_1,\ldots,\omega_\mu) \mapsto
  (\omega_1,\ldots,\omega_{j-1},\omega_i^{-1}\omega_j\omega_i,
  \omega_{j+1},\ldots,\omega_\mu) \, ,\\
  \beta_i(j) & : & (\omega_1,\ldots, \omega_\mu) \mapsto
  (\omega_1,\ldots,\omega_{j-1},\omega_i\omega_j\omega_i^{-1},\omega_{j+1},
  \ldots,\omega_\mu).
  \end{eqnarray*}
 If $\omega_1,\ldots,\omega_\mu$ forms a generating system for
 $\pi_1(\Delta',\eta)$,
 then $\pi_1(\Delta',\eta)$ is also generated by the new simple loops that arise from application of the operations $\alpha_i(j)$ and
 $\beta_i(j)$.
 Hence $\alpha_i(j)$ and $\beta_i(j)$ transfer weakly distinguished path systems again to weakly distinguished path systems.

 These operations thus induce operations on the corresponding weakly distinguished systems of vanishing cycles too,
 which we denote by the same symbols, and they appear as follows:
  \begin{eqnarray*}
  \alpha_i(j) & : & (\delta_1,\ldots,\delta_\mu) \mapsto
  (\delta_1,\ldots,\delta_{j-1},h_{\delta_i}(\delta_j),\delta_{j+1},\ldots,\delta_\mu),\\
  \beta_i(j) & : & (\delta_1,\ldots,\delta_\mu) \mapsto
  (\delta_1,\ldots,\delta_{j-1},h_{\delta_i}^{-1}(\delta_j),
  \delta_{j+1},\ldots,\delta_\mu).
  \end{eqnarray*}

 The operations $\alpha_i(j)$ and $\beta_i(j)$ are again mutually inverse in the sense above.
 For even $n$ they even agree.

 If $(\gamma_1,\ldots,\gamma_\mu)$ is a  distinguished path system and
 if $\tau_{j,j+1}\in \calS_\mu$ denotes the transposition of $j$ and $j+1$, then, up to
 homotopy,
  \begin{eqnarray*}
  \alpha_j&=&\tau_{j,j+1}\circ \alpha_j(j+1),\\
  \beta_{j+1}&=&\tau_{j,j+1}\circ \beta_{j+1}(j).
  \end{eqnarray*}

 We now also have the following proposition:

\begin{proposition}\label{prop:wdtransitive}
 Let $(\omega_1,\ldots,\omega_\mu)$ and $(\omega_1',\ldots,\omega_\mu')$ be two free generating systems of the free group $\pi_1(\Delta',\eta)$ such
 that $\omega_i$ and $\omega_i'$ are conjugate to one another for $i=1,\ldots,\mu$.
 Then one can obtain $(\omega_1',\ldots,\omega_\mu')$ from $(\omega_1,\ldots,\omega_\mu)$
 by the application of a sequence of operations of type $\alpha_i(j)$ or $\beta_i(j)$.
\end{proposition}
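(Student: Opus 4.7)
The plan is to reformulate the proposition in terms of automorphisms of the free group $F := \pi_1(\Delta',\eta)$ of rank $\mu$, and then to invoke (or to re-prove by Nielsen-type reduction) the theorem that the group of basis-conjugating automorphisms of $F$ is generated by the ``elementary'' conjugations of one generator by another.

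Concretely, let $\phi \in \mathrm{Aut}(F)$ be the unique automorphism with $\phi(\omega_i) = \omega_i'$ for $i=1,\ldots,\mu$. The hypothesis that each $\omega_i'$ is conjugate to $\omega_i$ in $F$ is exactly the assertion that $\phi$ lies in the subgroup $\Sigma_\mu \subset \mathrm{Aut}(F)$ of \emph{basis-conjugating} (also called \emph{symmetric}) automorphisms. At the level of $\mathrm{Aut}(F)$, the operations $\alpha_i(j)$ and $\beta_i(j)$ correspond exactly to the elementary basis-conjugating automorphisms $\chi_{i,j}^{\pm 1}$, which fix every $\omega_k$ with $k \neq j$ and send $\omega_j$ to $\omega_i^{\mp 1} \omega_j \omega_i^{\pm 1}$. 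Thus the proposition is precisely the statement that $\Sigma_\mu$ is generated by the $\chi_{i,j}^{\pm 1}$, a classical theorem of J.~McCool (building on earlier work of Magnus and Zieschang on peak reduction for free groups).

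For a self-contained argument I would proceed by induction on the total length $N(\phi) := \sum_{i=1}^{\mu} |g_i|$, where $g_i$ is the reduced word of minimal length with $\omega_i' = g_i\, \omega_i\, g_i^{-1}$, chosen so that $g_i$ does not end in $\omega_i^{\pm 1}$ (cyclic reduction modulo $\omega_i$ pins down $g_i$ uniquely). The base case $N(\phi)=0$ yields $\omega_i' = \omega_i$ for every $i$. For the inductive step, one uses the fact that $(\omega_1',\ldots,\omega_\mu')$ is itself a free basis of $F$ to show that some outermost letter $\omega_k^{\varepsilon}$ of one of the $g_i$ can be cancelled: applying the corresponding operation $\alpha_k(i)$ or $\beta_k(i)$ to $(\omega_1,\ldots,\omega_\mu)$ produces a new system for which the analogously defined $N$-quantity is strictly smaller.

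The main obstacle is precisely this peak-reduction step, i.e.\ locating the cancellable letter. Without the hypothesis that $\omega_1',\ldots,\omega_\mu'$ again form a free basis, an arbitrary tuple of conjugates of the $\omega_i$ would admit no such reduction. The combinatorial rigidity of Nielsen-reduced free bases --- together with the fact that $\phi$ is a genuine automorphism, not just a self-map --- is what forces the desired cancellation and allows one to peel off an outermost letter of some $g_i$ by a single conjugation operation, closing the induction.
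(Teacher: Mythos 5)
Your reformulation is exactly right, and it is in fact the route the paper takes: the statement is equivalent to saying that the subgroup of $\mathrm{Aut}(F)$ of basis-conjugating automorphisms (those sending each $\omega_i$ to a conjugate of itself) is generated by the elementary conjugations $\chi_{i,j}^{\pm1}$, and the paper offers no proof of its own but cites precisely this theorem --- due to Humphries (1985), or alternatively deducible from Whitehead's 1936 peak-reduction theorem as in Lyndon--Schupp, Proposition~4.20. (Your attribution to McCool is slightly off: McCool's 1986 paper gives a \emph{presentation} of this subgroup; the generation statement itself is Humphries', which is why the paper cites him.) If you are content to invoke that theorem, your argument is complete and coincides with the paper's.

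The ``self-contained'' induction you sketch, however, has a genuine gap at exactly the point you flag. You need that whenever $N(\phi)=\sum_i|g_i|>0$, some \emph{single} elementary conjugation strictly decreases $N$, and you give no argument for this; it is the entire content of the theorem, not a routine consequence of ``the combinatorial rigidity of Nielsen-reduced free bases.'' The known proofs do not proceed by such a greedy reduction. Rather, one applies peak reduction (or McCool's stabilizer theorem) to the tuple of \emph{cyclic} words $([\omega_1],\ldots,[\omega_\mu])$, each of length $1$ and hence already minimal; the conclusion is that $\phi$ is a product of Whitehead automorphisms each of which stabilizes this tuple. Such a Whitehead automorphism of the second kind with multiplier $\omega_k^{\varepsilon}$ must send each $\omega_j$ to either $\omega_j$ or $\omega_k^{-\varepsilon}\omega_j\omega_k^{\varepsilon}$, i.e.\ it conjugates an entire subset of the basis simultaneously by the same letter, and only \emph{then} does one decompose it as a product of elementary $\chi_{k,j}^{\varepsilon}$. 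The length that is controlled in this argument is the total length of the cyclic words, not your $N(\phi)$, and the moves that keep it controlled are not single elementary conjugations. So your induction, as set up, is not known to close; to make the proof self-contained you would have to either prove the single-move reduction for $N$ (which I do not believe is straightforward, and may require allowing plateaus) or switch to the Whitehead-automorphism bookkeeping just described.
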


 This proposition was conjectured by Gusein-Zade \cite{GZ77} and proved by S.~P.~Humphries \cite{Hum85} in 1985.
 It also follows, as remarked by R.~Pellikaan, from an old result of J.~H.~C.~Whitehead from the year 1936
 (cf.~\cite[Proposition~4.20]{LS77}). We refer to \cite{Hum85}.

It follows from Proposition~\ref{prop:wdtransitive} that
 any two weakly distinguished systems of vanishing cycles can be transformed one to the other by iteration of
 the operations $\alpha_i(j)$ and $\beta_i(j)$ and a succeeding change of
 orientation of some of the cycles.

J.~McCool \cite{McC86} found a presentation of the subgroup of the automorphism group of a free group generated by the operations $\alpha_i(j)$ and $\beta_i(j)$.

\section{Computation of intersection matrices} \label{sect:comp}
The {\em Sebastiani-Thom sum}\index{Sebastiani-Thom sum}\index{sum!Sebastiani-Thom} of the singularities $f: (\CC^{n+1},0) \to (\CC,0)$ and $g: (\CC^m,0) \to (\CC,0)$ is the singularity of the function germ $f \oplus g: (\CC^{n+m+1},0) \to (\CC,0)$ defined by the formula
\[
(f \oplus g)(x,y)=f(x)+g(y)
\]
($x \in \CC^{n+1}, y \in \CC^m, (x,y) \in \CC^{n+m+1} \cong \CC^{n+1} \oplus \CC^m$).

M.~Sebastiani and R.~Thom \cite{ST71} proved that the monodromy operator of the singularity $f \oplus g$ is equal to the tensor product of the monodromy operators of the singularities $f$ and $g$. If $L_f$, $L_g$, and $L_{f \oplus g}$ denote the Seifert form of $f$, $g$, and $f \oplus g$ respectively, then by a result of Deligne (see \cite{Dem75}, see also \cite[Theorem~2.10]{AGV88})
\[
L_{f \oplus g} = (-1)^{(n+1)m} L_f \otimes L_g.
\]
Gabrielov \cite{Ga73} showed how to calculate an intersection matrix of $f \oplus g$ from the intersection matrices of $f$ and $g$ with respect to distinguished bases (see also \cite[Theorem~2.11]{AGV88}). As a corollary, he obtained certain intersection matrices for singularities of the form
\[ f(x)=z_0^{a_0}+ \cdots +z_n^{a_{n}}, \quad \mbox{for } a_i \in \ZZ,\  a_i \geq 2,\  i=0, \ldots , n.
\]
These singularities are called {\em Brieskorn-Pham singularities}\index{Brieskorn-Pham singularities}\index{singularities!Brieskorn-Pham}. They were considered by Brieskorn \cite{Br66} and Pham \cite{Ph65} (see also \cite{HM68}).
For such a singularity, already Pham \cite{Ph65} had found a basis and calculated the intersection matrix with respect to this basis. Gabrielov showed that Pham's basis can be deformed to a distinguished basis and the intersection matrix is given by the same formulas which Gabrielov obtained. Independently, these intersection matrices with respect to distinguished bases were also  calculated by A.~Hefez and Lazzeri \cite{HL74}.

A special case of the Sebastiani-Thom sum of $f$ and $g$ is the case when $g(y)=y_1^2+ \cdots + y_m^2$. This is called a {\em stabilization}\index{stabilization}  of $f$. The following theorem is a special case of Gabrielov's result (see also \cite[Theorem~2.14]{AGV88}).

\begin{sloppypar}

 \begin{theorem} \label{theo:stab}
 Let $f_\lambda$ be a morsification of the singularity $f$,
 let $(\gamma_1,\ldots,\gamma_\mu)$ be a  distinguished path system for
 $f_\lambda$,
 and let $(\delta_1,\ldots,\delta_\mu)$ be a corresponding  distinguished basis.

 Then $f_\lambda(x)+y_1^2+\ldots+y_m^2$ is a morsification of the singularity
 $f(x)+y_1^2+\ldots+y_m^2$, with the same critical values,
 $(\gamma_1,\ldots,\gamma_\mu)$ is also a  distinguished path system for this singularity,
 and for a corresponding  distinguished basis
 $({\widetilde\delta}_1,\ldots,{\widetilde\delta}_\mu)$ we have
  $$
  \langle {\widetilde\delta}_i,{\widetilde\delta}_j\rangle=[\on{sign}(j-i)]^m
  (-1)^{(n+1)m+\frac{m(m-1)}{2}}\langle\delta_i,\delta_j\rangle \ \mbox{for } i\neq j.
  $$
 \end{theorem}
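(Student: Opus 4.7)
The plan is to verify the morsification claim first and then establish the intersection-number formula by an induction on $m$, reducing to the one-variable case, which itself follows from Deligne's Sebastiani-Thom formula for Seifert forms combined with Theorem~\ref{thm:SLH}.

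The morsification claim is straightforward. The gradient of $\tilde f_\lambda(x,y) := f_\lambda(x) + y_1^2 + \cdots + y_m^2$ equals $(\nabla_x f_\lambda(x), 2y_1, \ldots, 2y_m)$, so its critical points are exactly $(p_i, 0)$ with critical values $f_\lambda(p_i) = s_i$, and the Hessian at $(p_i, 0)$ is block diagonal with the Hessian of $f_\lambda$ at $p_i$ on one block and $2 I_m$ on the other, hence non-degenerate. Thus $(\gamma_1, \ldots, \gamma_\mu)$ remains a distinguished path system for $\tilde f_\lambda$.

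For the intersection numbers I would handle the base case $m=1$. Write $g(y) = y^2$; its Milnor number is one with a single vanishing cycle $\delta_g$, and Theorem~\ref{thm:SLH}(i) applied with $n=0$ gives $L_g(\delta_g, \delta_g) = -1$. Because $\mu_g = 1$, the Sebastiani-Thom join construction (a special case of Gabrielov's result on joins of distinguished bases) sets up a bijection between distinguished bases of $f$ and of $\tilde f$ under which $\tilde\delta_i$ corresponds to $\delta_i \otimes \delta_g$; Deligne's formula quoted before the theorem then yields
\[
L_{\tilde f}(\tilde\delta_i, \tilde\delta_j) = (-1)^{n+1} L_f(\delta_i,\delta_j)\, L_g(\delta_g,\delta_g) = (-1)^{n} L_f(\delta_i,\delta_j).
\]
Since $L_f$ is lower triangular by Theorem~\ref{thm:SLH}(i), for $i \neq j$ exactly one of $L_f(\delta_i,\delta_j)$ and $L_f(\delta_j,\delta_i)$ vanishes. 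Feeding this into the relation $S = -L - (-1)^{\dim} L^t$ from Theorem~\ref{thm:SLH}(ii) for both $f$ (sign $(-1)^n$) and $\tilde f$ (sign $(-1)^{n+1}$), a short case analysis on the sign of $j-i$ delivers
\[
S_{\tilde f}(\tilde\delta_i,\tilde\delta_j) = \mathrm{sign}(j-i) \cdot (-1)^{n+1} S_f(\delta_i,\delta_j) \qquad (i \ne j),
\]
which is the theorem for $m=1$.

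The general case then follows by induction on $m$: decomposing the $m$-fold stabilization as an iterated sequence of one-variable stabilizations and applying the base case each time with the ambient dimension stepping through $n, n+1, \ldots, n+m-1$, each iteration contributes a factor $\mathrm{sign}(j-i)$, so that the prefactor accumulates to $[\mathrm{sign}(j-i)]^m$, while the exponent $(n+1)m + \tfrac{m(m-1)}{2}$ arises as $\sum_{k=0}^{m-1}(n+k+1)$. The main technical burden is precisely this parity bookkeeping that links Deligne's sign, the lower-triangular convention for the Seifert matrix, and the recursive exponent; the geometric content is entirely absorbed into the base step and Sebastiani-Thom.
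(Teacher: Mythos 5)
Your argument is correct, and it takes a genuinely different route from the paper's. The paper offers no proof at all: it presents Theorem~\ref{theo:stab} as a special case of Gabrielov's join formula for Sebastiani--Thom sums \cite{Ga73} (see \cite[Theorem~2.11 and 2.14]{AGV88}), where the intersection numbers of the product basis are computed directly from the geometry of the join. You instead deduce the intersection numbers algebraically from Deligne's Seifert-form formula $L_{f\oplus g}=(-1)^{(n+1)m}L_f\otimes L_g$ together with Theorem~\ref{thm:SLH}: since $\mu_g=1$ for $g(y)=y^2$, the Seifert matrix just rescales by $(-1)^n$, and the triangularity plus $S=-L-(-1)^nL^t$ (with the parity of the ambient dimension flipping at each stabilization) produces exactly the factor $\on{sign}(j-i)\,(-1)^{n+1}$ per step; I checked the case analysis and the accumulated exponent $\sum_{k=0}^{m-1}(n+k+1)=(n+1)m+\tfrac{m(m-1)}{2}$, and both are right. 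What your route buys is a short, purely sign-theoretic derivation that isolates the single geometric input --- that the join of a distinguished basis of $f$ with the unique vanishing cycle of $y_1^2+\cdots+y_m^2$ is again a distinguished basis for a corresponding path system --- which you correctly attribute to Gabrielov but do not reprove; that fact is also where the paper's citation carries the real content, so neither treatment is fully self-contained on this point. Two cosmetic remarks: lower triangularity gives only that \emph{at least} one of $L_f(\delta_i,\delta_j)$, $L_f(\delta_j,\delta_i)$ vanishes for $i\neq j$ (not exactly one), though the case analysis is unaffected; and the identification $\widetilde\delta_i=\delta_i\otimes\delta_g$ holds only up to orientation, which is harmless because the theorem asserts the formula for \emph{a} corresponding distinguished basis, i.e.\ for a suitable choice of orientations.
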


\end{sloppypar}

It follows from Theorem~\ref{theo:stab} that, by taking a suitable stabilization, one can assume that $n \equiv 2 \, \mbox{mod}\, 4$. In this case, the intersection form is symmetric and the vanishing cycles have self intersection number $-2$. The Picard-Lefschetz transformation $h_{\delta_i}$ acts on $M$ by the formula
\[
h_{\delta_i}(\alpha)=s_{\delta_i}(\alpha)=\alpha + \langle \alpha , \delta_i \rangle \delta_i.
\]
This is a reflection in the hyperplane orthogonal to the vanishing cycle $\delta_i$. In accordance with the definition in Sect.~\ref{sect:CD}, in the Coxeter-Dynkin diagram, edges of weight $+1$ are depicted by solid lines and edges of weight $-1$ are depicted by dashed lines. Note that the definition of a Coxeter-Dynkin diagram in \cite[2.8]{AGV88} is slightly different: It encodes the intersection matrix in the case $n \equiv 2 \, \mbox{mod}\, 4$,  and the $i$-th and $j$-th vertices are joined by an edge of multiplicity $\langle \delta_i, \delta_j \rangle$. 

\begin{example} Consider the germ of the function $f: \CC^2 \to \CC$ defined by  $f(x,y)=x^5+y^3$. (This is the singularity $E_8$, see Sect.~\ref{sect:special}.) By Example~\ref{ex:A_kclass} and the result of Gabrielov \cite{Ga73} there is a distinguished basis of $f$ with a Coxeter-Dynkin diagram of the shape of Fig.~\ref{fig:E_8Gab}.
\begin{figure}[h]
$$
\xymatrix{ 
 & *{\bullet} \ar@{-}[r] \ar@{}^{2} \ar@{-}[d]    & *{\bullet} \ar@{-}[r] \ar@{}^{4} \ar@{-}[d]  &  *{\bullet}  \ar@{-}[r] \ar@{}^{6} \ar@{-}[d]  &  *{\bullet} \ar@{}[r] 
\ar@{}^{8} \ar@{-}[d]  &  \\               
&  *{\bullet} \ar@{-}[r] \ar@{}_{1} \ar@{--}[ur]   & *{\bullet} \ar@{-}[r] \ar@{}_{3} \ar@{--}[ur]   &  *{\bullet}  \ar@{-}[r]  \ar@{}_{5} \ar@{--}[ur]   & *{\bullet}  \ar@{}[r] 
 \ar@{}_{7}   &               
 }
$$Ê
\caption{Gabrielov diagram of $E_8$} \label{fig:E_8Gab}
\end{figure}
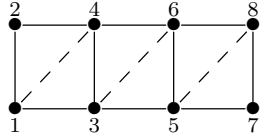
By the transformations
\[
\alpha_7, \alpha_6, \alpha_5, \alpha_4, \alpha_3, \alpha_2, \alpha_1; \beta_5, \beta_4;  \beta_7, \beta_6, \beta_5;   \beta_7, \beta_6, \beta_5; \beta_8, \beta_7, \beta_6; \kappa_2, \kappa_7, \kappa_8,
\]
the Coxeter-Dynkin diagram is transformed to the classical Coxeter-Dynkin diagram of type $E_8$, see Fig.~\ref{fig:E_8class}. It follows from Theorem~\ref{thm:Deligne} below that the numbering can be changed by braid group transformations to an arbitrary numbering.
\begin{figure}[h]
$$
\xymatrix{ 
*{\bullet} \ar@{-}[r] \ar@{}^{7}  & *{\bullet} \ar@{-}[r] \ar@{}^{6} &  *{\bullet}  \ar@{-}[r] \ar@{}^{5} \ar@{-}[d]  &  *{\bullet} \ar@{-}[r]   \ar@{}^{4} & *{\bullet} \ar@{-}[r]   \ar@{}^{3} & *{\bullet} \ar@{-}[r]   \ar@{}^{2} & *{\bullet}  \ar@{}^{1}\\               
&  & *{\bullet}\ar@{}_{8}  & & & &                 
 }
$$Ê
\caption{Standard Coxeter-Dynkin diagram $E_8$} \label{fig:E_8class}
\end{figure}
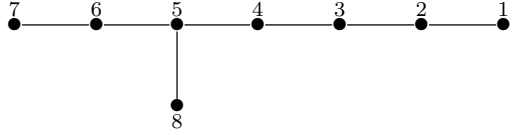
\end{example}

Another method to compute an intersection matrix with respect to a distinguished basis of $f$ is the polar curve method of Gabrielov \cite{Ga79}. 

If $n=1$, so $f:(\CC^2,0) \to (\CC,0)$ defines a curve singularity, there is an especially nice method to compute an intersection matrix with respect to a distinguished basis using a real morsification of the singularity. This method is independently due to N.~A'Campo \cite{A'C75a} and Gusein-Zade \cite{GZ74, GZ75}.

P.~Orlik and R.~Randell \cite{OR77} computed the classical monodromy operator for  weighted homogeneous polynomials of the form
\[
f(z_0, \ldots , z_n)=z_0^{a_0} +z_0z_1^{a_1} + \ldots + z_{n-1}z_n^{a_n}, \quad n \geq 1.
\]
Moreover, they formulated the following conjecture. Let $r_k=a_0a_1 \cdots a_k$ for $k=0,1, \ldots ,n$, $r_{-1}=1$, and define integers $c_0, c_1, \ldots , c_\mu$ by
\[
\prod_{i=-1}^n(t^{r_i}-1)^{(-1)^{n-i}} = c_\mu t^{\mu} + \cdots + c_1t+c_0.
\]

\begin{conjecture}[Orlik-Randell]
There exists a distinguished basis of $f$ such that the Seifert matrix $L$ of $f$ is given by
\[
L =  -(-1)^{n(n+1)/2} \begin{pmatrix} c_0 & 0 & \cdots & \cdots & 0 & 0 & 0 \\
                                                              c_1 & c_0 & 0 &  \cdots & \cdots & 0 & 0\\
                                                              c_2 & c_1 & c_0 & 0 & \cdots & \cdots & 0 \\
                                                              \vdots & \vdots & \vdots & \ddots & \ddots & \vdots & \vdots\\
                                                              c_{\mu-3}  & c_{\mu-4} & c_{\mu-5} & \cdots & c_0 & 0 & 0 \\
                                                              c_{\mu-2} & c_{\mu-3}  & c_{\mu-4}& \cdots  & c_1 & c_0 & 0 \\
                                                              c_{\mu-1} & c_{\mu-2} & c_{\mu-3} & \cdots & c_2 & c_1 & c_0
                                       \end{pmatrix}  \, .                       
\]
\end{conjecture}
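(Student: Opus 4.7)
I would proceed by induction on $n$, explicitly constructing a distinguished basis and verifying its Seifert matrix takes the predicted lower triangular Toeplitz form.

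\textbf{Base case} ($n=0$). Here $f(z_0)=z_0^{a_0}$ is the singularity $A_{a_0-1}$ with $\mu=a_0-1$. The product evaluates to
\[
\prod_{i=-1}^{0}(t^{r_i}-1)^{(-1)^{-i}}=\frac{t^{a_0}-1}{t-1}=1+t+\cdots+t^{a_0-1},
\]
so $c_0=c_1=\cdots=c_{a_0-1}=1$. Take the distinguished basis of Example \ref{ex:A_k} with $\langle\delta_i,\delta_i\rangle=2$ and $\langle\delta_i,\delta_j\rangle=1$ for $i\neq j$. Theorem \ref{thm:SLH}(i) forces $L_{ii}=-1$, and then Theorem \ref{thm:SLH}(ii), namely $-L-L^t=S$, uniquely determines $L_{ij}=-1$ for $i>j$ and $L_{ij}=0$ for $i<j$. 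This matches the conjecture with prefactor $-(-1)^{0}=-1$.

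\textbf{Inductive step.} Assume the conjecture for the truncated chain $f'(z_0,\dots,z_{n-1})$. Write $f=f'+z_{n-1}z_n^{a_n}$. The plan is to build a morsification $f_\lambda$ of $f$ by starting from a morsification $f'_\mu$ of $f'$ (furnishing the inductive distinguished basis $(\delta_i')$) together with a small generic perturbation in $z_n$. The projection $(z_0,\dots,z_n)\mapsto(z_0,\dots,z_{n-1})$, restricted to the zero locus of $\partial f/\partial z_n = a_n z_{n-1} z_n^{a_n-1}$, is an $a_n$-fold cover ramified along $z_{n-1}=0$, so the critical points of $f_\lambda$ arrange themselves as lifts $p_{i,j}$ ($1\le i\le\mu(f')$, $1\le j\le a_n-1$) of the critical points of $f'_\mu$, together with further critical points over the ramification. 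A distinguished path system can be obtained by lifting a distinguished path system of $f'_\mu$ through this cover and inserting the standard $A_{a_n-1}$-configuration fiberwise, ordered lexicographically. The resulting intersection numbers admit a \emph{twisted Sebastiani--Thom} formula, where the twist encodes the coupling through $z_{n-1}$. Using the inductive hypothesis on $L'$ together with this twisted product, one reads off the Seifert matrix of $(\delta_{i,j})$ and, possibly after applying braid group operations (Proposition \ref{prop:transitive}), brings it into the lower triangular Toeplitz form whose generating polynomial telescopes to $\prod_{i=-1}^n(t^{r_i}-1)^{(-1)^{n-i}}$.

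\textbf{Main obstacle.} The principal difficulty is that $f$ is \emph{not} a Sebastiani--Thom sum, so Gabrielov's formula (Theorem \ref{theo:stab}) and Deligne's tensor product formula for the Seifert form do not apply as black boxes. The coupling $z_{n-1}z_n^{a_n}$ degenerates the $A_{a_n-1}$ fibration over $\CC^n_{z_0,\dots,z_{n-1}}$ along $z_{n-1}=0$, so the vanishing cycles of $f$ are \emph{twisted} joins of those of $f'$ and of $A_{a_n-1}$. Controlling this twist precisely enough to obtain the full Toeplitz structure---not merely producing a Seifert matrix whose characteristic polynomial agrees via Theorem \ref{thm:SLH}(iii) with the Orlik--Randell polynomial---is the combinatorial and geometric heart of the problem. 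Concretely, the naive inductive construction will produce a Seifert matrix in a non-Toeplitz form, and one must exhibit an explicit sequence of braid moves $\alpha_j,\beta_{j+1}$ (and sign changes $\kappa_i$) converting it to the canonical Orlik--Randell form; verifying that such a sequence exists, and that the resulting entries are indeed the coefficients $c_j$ of $\prod_{i=-1}^n(t^{r_i}-1)^{(-1)^{n-i}}$, is the key technical hurdle.
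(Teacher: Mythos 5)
This statement is a \emph{conjecture}, not a theorem: the paper explicitly records that the Orlik--Randell conjecture ``is still open'' (only an algebraic analogue has been proved, by Aramaki and Takahashi), so there is no proof in the paper to compare against, and any complete argument you produced would be a new research result rather than a reconstruction. Your proposal is not such an argument. The base case is correct but is just the known $A_k$ computation: the product telescopes to $1+t+\cdots+t^{a_0-1}$, and Theorem~\ref{thm:SLH}(i)--(ii) applied to the intersection matrix of Example~\ref{ex:A_k} does force $L$ to be the all-$(-1)$ lower triangular matrix, matching the prefactor $-(-1)^{0}=-1$. Everything beyond that is a statement of intent rather than a proof.

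The genuine gap is exactly the one you name yourself in the ``main obstacle'' paragraph: since $f=f'+z_{n-1}z_n^{a_n}$ is not a Sebastiani--Thom sum, neither Gabrielov's join formula nor Deligne's tensor formula for the Seifert form applies, and the ``twisted Sebastiani--Thom formula'' you invoke is never defined, let alone proved. The inductive step therefore reduces to two unestablished claims: (a) that the lifted path system you describe is distinguished and its Seifert matrix is computable in closed form from $L'$, and (b) that an explicit sequence of braid moves $\alpha_j$, $\beta_{j+1}$, $\kappa_i$ converts that matrix into the Toeplitz form with entries given by the coefficients of $\prod_{i=-1}^n(t^{r_i}-1)^{(-1)^{n-i}}$. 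Note also that matching the characteristic polynomial of $H=(-1)^{n+1}L^{-1}L^t$ (which is what Orlik--Randell actually computed for these weighted homogeneous polynomials) is far weaker than producing the Seifert matrix itself, so even a correct monodromy computation would not close the argument. As it stands, the proposal is a plausible research programme, not a proof, and it should not be presented as one.
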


This conjecture is still open. However, recently D.~Aramaki and A.~Takahashi \cite{AT19} proved an algebraic analogue of this conjecture.

\section{The discriminant and the Lyashko-Looijenga map} \label{sect:LL}
Let $f:(\CC^{n+1},0)\to (\CC,0)$ be a holomorphic function germ with an isolated singularity at $0$, $\on{grad}f(0)=0$.
Then one obtains a universal unfolding $F$ of $f$ as follows (see \cite[1.3]{Gr19} or \cite[Proposition~3.17]{Eb07}):
 Let $g_0=-1,g_1,\ldots,g_{\mu-1}$ be representatives of a basis of the $\CC$-vector space
  $$
  \calO_{n+1}\left/\left(\frac{\partial f}{\partial z_0},\ldots, \frac{\partial
  f}{\partial z_n}\right)\calO_{n+1},\right.
  $$
 which has dimension $\mu$. Then put
  $$
  \funktion{F}{(\CC^{n+1}\times\CC^\mu,0)}{(\CC,0)}{(z,u)}
  {f(z)+\sum\limits_{j=0}^{\mu-1}g_j(z)u_j.}
  $$
 Let
  $$F: V \times U \to \CC
  $$
 be a representative of the unfolding $F$,
 where $V$ is an open neighborhood of $0$ in $\CC^{n+1}$ and $U$ is an open neighborhood of $0$ in $\CC^\mu$.
 We put
  \begin{eqnarray*}
  \calY&:=&\{(z,u)\in V\times U\,|\, F(z,u)=0\},\\
  \calY_u&:=&\{z\in V\ |\ F(z,u)=0\}.
  \end{eqnarray*}
 Since $F(z,0)=f(z)$, there is an $\eps>0$ such that every sphere $S_\rho\subset V$ around $0$
 of radius $\rho\leq\eps$ intersects the set $\calY_0$ transversally.
 Let $\eps>0$ be so chosen.
 Then there is also an $\theta>0$ such that for $|u|\leq\theta$ the set $\{u \in \CC^\mu \, |\,|u|\leq\theta \}$
 lies entirely in $U$ and $\calY_u$ intersects the sphere $S_\eps$ transversally.
 Let $\theta$ be so chosen. We put
  \begin{eqnarray*}
  \calX^\circ&:=&\{(z,u)\in\calY\ \big|\ |z| < \eps,|u|<\theta\},\\
  \calX&:=&\{(z,u)\in\calY\ \big|\ |z|\leq\eps,|u|<\theta\},\\
  \partial\calX&:=&\{(z,u)\in\calY\ \big|\ |z|=\eps,|u|<\theta\},\\
  S&:=&\{u\in U\ \big|\ |u|<\theta\},
  \end{eqnarray*}
  $$
  \funktion{p}{\calX}{S}{(z,u)}{u.}
  $$

 Let $C$ be the set the critical points and $D=p(C)\subset S$ the {\em discriminant}\index{discriminant} of $p$.
 We have the following result (see, e.g., \cite[Proposition~3.21]{Eb07}).
 \begin{theorem} \label{thm:irr}
 For a suitable $\theta>0$ we have:
\begin{itemize}
\item[{\rm (i)}]  The map $p:\calX\to S$ is proper.
\item[{\rm (ii)}]  $C$ is a nonsingular analytic subset of $\calX^\circ$ and is closed in
 $\calX$.
 \item[{\rm (iii)}] The restriction $p|_C:C\to S$ is finite (i.e., proper with finite
 fibers).
\item[{\rm (iv)}] The discriminant $D$ is an irreducible hypersurface in $S$.
\end{itemize}
\end{theorem}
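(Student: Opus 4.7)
The plan is to handle (i) and (iii) by direct compactness/finiteness arguments and to reduce (ii) and (iv) to the versality of $F$, i.e., to the fact that $g_0=-1,g_1,\ldots,g_{\mu-1}$ represent a $\CC$-basis of the Jacobi algebra $\calO_{n+1}/J(f)$. For (i), $\calX\subset\{|z|\leq\eps\}\times S$, so for compact $K\subset S$ the preimage $p^{-1}(K)=\calX\cap(\{|z|\leq\eps\}\times K)$ is a closed subset of the compact set $\{|z|\leq\eps\}\times K$, hence compact. For (iii), properness of $p|_C$ is inherited from (i), and the fibres $p^{-1}(u)\cap C$ are finite because they consist of zeros of the holomorphic map $\nabla_z F(\cdot,u)$ in the compact ball $\{|z|\leq\eps\}$ that additionally satisfy $F(\cdot,u)=0$; for $\theta$ small these zeros cluster only near the isolated singularity $0$ of $f$ and are therefore discrete.

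For (ii), I would introduce the auxiliary map
\[
\Phi : V\times U \longrightarrow \CC^{n+1}\times\CC, \qquad \Phi(z,u)=\bigl(\nabla_z F(z,u),\,F(z,u)\bigr),
\]
so that the Jacobi criterion identifies the critical points of $p|_{\calX^\circ}$ with $\Phi^{-1}(0)\cap\calX^\circ$, while the transversality of $\calY_u$ with $S_\eps$ forces $\nabla_z F\neq 0$ on $\{|z|=\eps\}$. This already gives $C\subset\calX^\circ$ and closedness of $C$ in $\calX$. The crux is that $\Phi$ is a submersion at $(0,0)$: its $u$-derivatives form the columns $(\nabla g_j(0),g_j(0))$, of which column $j=0$ is $(0,-1)$ and supplies the $\CC$ factor. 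Versality is precisely the statement that the linear parts of $g_1,\ldots,g_{\mu-1}$ span the cokernel $\mathfrak{m}/(\mathfrak{m}^2+J(f))$ of the Hessian of $f$ at $0$; combined with the image of the Hessian coming from the $z$-derivatives, this yields surjectivity of $d\Phi_{(0,0)}$ onto $\CC^{n+1}\times\CC$. Openness of versality propagates this to a neighborhood of $(0,0)$, and shrinking $\theta$ confines $C$ to that neighborhood. Hence $C$ is a smooth submanifold of dimension $\mu-1$.

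The main obstacle is (iv). The dimension count $\dim D=\dim C=\mu-1=\dim S-1$ together with (iii) already makes $D$ a hypersurface. For irreducibility, I would reduce to connectedness of $C$: by Remmert's proper mapping theorem the image of an irreducible analytic set under a proper holomorphic map is irreducible, and since $C$ is smooth by (ii) (hence locally irreducible), irreducibility of $C$ amounts to connectedness. Connectedness follows by further shrinking $\theta$: any cluster point $(z^*,0)$ of a sequence $(z_k,u_k)\in C$ with $u_k\to 0$ satisfies $\nabla f(z^*)=0$ and $f(z^*)=0$, which forces $z^*=0$ by the isolated-singularity hypothesis, so for $\theta$ small $C$ lies in an arbitrarily small neighborhood of $(0,0)$, on which the submersion picture from (ii) realises $C$ as one connected smooth manifold germ. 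The real delicacy is that, a priori, the various critical points of $f_u$ with critical value $0$ (as $u$ ranges over $D$) might trace out several disjoint components of $C$; the local smoothness at $(0,0)$ combined with the forcing $u\to 0\Rightarrow z\to 0$ is what pins them together into a single irreducible component and hence makes $D$ irreducible.
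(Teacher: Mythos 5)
Your overall architecture is the standard one (and matches the proof in the reference \cite[Proposition~3.21]{Eb07} that the paper cites in place of a proof): properness of $p$ by compactness, smoothness of $C$ from versality via the submersion $\Phi=(\nabla_zF,F)$, finiteness of $p|_C$, and irreducibility of $D$ from smoothness and connectedness of $C$ together with Remmert's proper mapping theorem. One smaller repair concerns (iii): your justification of finite fibres (``the zeros cluster only near $0$ and are therefore discrete'') is a non sequitur, since clustering near the origin as $u\to0$ says nothing about the fibre over a \emph{fixed} $u$. The correct argument is that $(p|_C)^{-1}(u)$ is a compact analytic subset of the open ball $\{|z|<\eps\}$ (compact by (i) and because $C$ is closed in $\calX$ and disjoint from $\partial\calX$), and a compact analytic subset of $\CC^{n+1}$ is finite; alternatively one uses the Weierstrass/finite coherence argument showing the relative critical locus is finite of degree $\mu$ over $S$.

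The genuine gap is the connectedness of $C$ in (iv), which you correctly single out as the crux but then assert rather than prove. Smoothness of the germ of $C$ at $(0,0)$ gives irreducibility of the \emph{germ}, and your shrinking argument puts $C$ inside an arbitrarily small ball $B_\rho$ around $(0,0)$ for $\theta$ small; but $C=\Phi^{-1}(0)\cap\{|z|\le\eps,\,|u|<\theta\}$ is then only an \emph{open subset} of the connected manifold $\Phi^{-1}(0)\cap B_\rho$, and open subsets of connected manifolds need not be connected. Compare $A=\{u=z^2(z-1)\}\subset\CC^2$: the germ at the origin is smooth, yet $A\cap\{|u|<\theta\}$ has two components for small $\theta$. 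What rules this out in your situation is precisely the extra input $(p|_C)^{-1}(0)=\{(0,0)\}$, but that input must be converted into an argument. A correct completion: by the curve selection lemma, $|u|^2$ restricted to the smooth manifold $\Phi^{-1}(0)$ has no critical point $x$ with $|u(x)|>0$ in a sufficiently small neighbourhood of $(0,0)$ (along a real-analytic curve of such critical points tending to $(0,0)$ the function $|u|^2$ would be constant and positive, contradicting $|u|\to0$); hence for $\theta$ small the gradient flow of $|u|^2|_C$ retracts $C$ onto $(p|_C)^{-1}(0)=\{(0,0)\}$, so $C$ is connected. Equivalently, one may invoke the local parametrization theorem for the irreducible germ $(C,(0,0))$, which is finite over $(S,0)$. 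With that step supplied, your reduction of (iv) to Remmert's theorem goes through.
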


 By the Ehresmann fibration theorem the map
  $$
  p':=p|_{\calX \setminus p^{-1}(D)}:\calX \setminus p^{-1}(D)\longrightarrow S \setminus D
  $$
 is then the projection of a differentiable fiber bundle, and the fibers of $p'$ are
 diffeomorphic to a Milnor fiber ${X}_\eta$ of $f$.

 Let $s\in S \setminus D$ and ${X}_s:=p^{-1}(s)=(p')^{-1}(s)$.
 Then $p'$ defines a representation
  $$
  \rho:\pi_1(S \setminus D,s)\longrightarrow\on{Aut}(\widetilde{H}_n({X}_s)).
  $$
 One has the following theorem (see \cite[Theorem~3.1]{AGV88} or \cite[Proposition~5.17]{Eb07}).

 \begin{theorem} \label{thm:bif}
 The image $\Gamma$ of the homomorphism
  $$
  \rho:\pi_1(S \setminus D,s)\longrightarrow\on{Aut}(\widetilde{H}_n({X}_s))
  $$
 coincides with the monodromy group of the singularity.
 \end{theorem}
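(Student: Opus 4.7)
The strategy is to exhibit a morsification of $f$ as the restriction of the universal unfolding $F$ to a suitably generic one-parameter family inside the base $S$, and then to reduce the computation of $\pi_1(S\setminus D,s)$ to the computation already made for morsifications by invoking a Zariski--Lefschetz hyperplane section argument. Throughout, the input that makes the whole argument work is the irreducibility of the discriminant $D$ established in Theorem~\ref{thm:irr}.

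First I would fix a generic complex affine line $L\subset S$ through a small noncritical point $s_0$. By Theorem~\ref{thm:irr}, $D$ is an irreducible hypersurface, so for a generic $L$ the intersection $L\cap D$ consists of $\mu$ distinct smooth points at which $L$ meets $D$ transversally. The restriction of $F$ to $V\times L$ is then a one-parameter unfolding $(z,\lambda)\mapsto f_\lambda(z)$ of $f$ whose critical values are exactly the $\mu$ points of $L\cap D$; because $L$ is transverse to the smooth locus of $D$, the function $f_\lambda$ is Morse for generic $\lambda$, i.e., it is a morsification of $f$ in the sense of Section~\ref{sect:dist}. The disc $\Delta'$ of that morsification can therefore be identified with an open subset of $L\setminus(L\cap D)$ containing $\mu$ small loops, one around each point of $L\cap D$, and under the inclusion $\iota\colon L\setminus(L\cap D)\hookrightarrow S\setminus D$ the simple loops that generate $\pi_1(\Delta',\eta)$ map to simple loops around the smooth locus of $D$. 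By the naturality of $\rho$ under restriction of $F$, this proves the inclusion $\text{(monodromy group)}\subseteq\Gamma$.

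For the reverse inclusion I would invoke the Zariski--Lefschetz theorem for hyperplane sections of complements of hypersurfaces: if $D\subset S$ is a (possibly singular) hypersurface and $L\subset S$ is a generic affine line, then the induced map $\iota_\ast\colon \pi_1(L\setminus(L\cap D))\to \pi_1(S\setminus D)$ is surjective. In our setting $D$ is irreducible by Theorem~\ref{thm:irr}, so $\pi_1(S\setminus D,s)$ is normally generated by the class of a single small loop around a smooth point of $D$; hence, even without the full Zariski--Lefschetz theorem, it suffices to observe that every such ``meridian loop'' around a smooth point of $D$ is conjugate in $\pi_1(S\setminus D)$ to one of the simple loops $\omega_i$ coming from the morsification. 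Combining $\iota_\ast$ surjective with the identification of $\pi_1(\Delta',\eta)\hookrightarrow\pi_1(L\setminus(L\cap D))$ yields that $\Gamma$ is contained in the image of $\rho\circ\iota_\ast$, which is the monodromy group. This gives the opposite inclusion and finishes the proof.

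The main technical hurdle is the Zariski--Lefschetz step, because we work in the analytic germ $S$ rather than in a projective variety; however, since $D$ is irreducible and we only need surjectivity up to the normal subgroup generated by one meridian, a local version is enough, and the transversality of the generic $L$ to the smooth locus of $D$ guarantees that each generator of $\pi_1(S\setminus D,s)$ is realised by a simple loop coming from the morsification. A secondary point to verify carefully is that the parallel transport defining $\rho$ on $\widetilde H_n(X_s)$ is compatible with the identification $\widetilde H_n(X_s)\cong M$ induced by restricting $F$ to $V\times L$; this follows from the fact that both fibrations are pulled back from $p'$ and that parallel transport in a fibre bundle depends only on the homotopy class of the path in the base.
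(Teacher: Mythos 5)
Your overall route is the standard one: the paper itself gives no proof of Theorem~\ref{thm:bif} but refers to \cite[Theorem~3.1]{AGV88} and \cite[Proposition~5.17]{Eb07}, and the argument there is essentially what you describe --- realise a morsification as the restriction of the universal unfolding to a generic line $L$ meeting $D$ transversally in $\mu$ smooth points, observe that the simple loops $\omega_i$ map to meridians of $D$ (giving the inclusion of the monodromy group in $\Gamma$), and then use a local Zariski--van Kampen/Lefschetz statement to see that $\iota_*\colon\pi_1(L\setminus(L\cap D))\to\pi_1(S\setminus D)$ is surjective. Your closing remark about the compatibility of the two parallel transports is also the right thing to check.

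However, your proposed shortcut for the reverse inclusion does not work as stated. Irreducibility of $D$ (Theorem~\ref{thm:irr}(iv)) gives that all meridians are conjugate in $\pi_1(S\setminus D,s)$, hence that this group is \emph{normally} generated by a single meridian. Applying $\rho$, this only says that $\Gamma$ is the normal closure \emph{in $\Gamma$} of one Picard--Lefschetz transformation $h_{\delta_1}$; since a conjugate $\rho(g)h_{\delta_1}\rho(g)^{-1}=h_{\rho(g)\delta_1}$ involves arbitrary elements $\rho(g)$ of the very group you are trying to identify, you cannot conclude from this alone that $\Gamma$ is generated by $h_{\delta_1},\dots,h_{\delta_\mu}$. (The statement that $\Lambda^*$ is a single $\Gamma$-orbit, which would close this loop, is Corollary~\ref{cor:vanishing} and is obtained downstream of Theorem~\ref{thm:bif}, so invoking it here would be circular.) The surjectivity of $\iota_*$ is therefore genuinely needed and must be proved or cited: it is a local Hamm--L\^e/Zariski--van Kampen type theorem, obtained in the references by fibering $S\setminus D$ over the complement of the bifurcation set using the constant-term direction $g_0=-1$ of the unfolding. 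A second, smaller point: your argument identifies $\Gamma$ with the monodromy group of the \emph{particular} morsification cut out by $L$, whereas the monodromy group in Sect.~\ref{sect:dist} is defined via an arbitrary morsification; you should add that versality maps the disc $\Delta'$ of any morsification into $S\setminus D$ (giving the inclusion of its monodromy group into $\Gamma$), and that the reverse inclusion for that morsification requires running the same surjectivity argument for its, in general non-linear, parameter curve.
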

 
In particular, Theorem~\ref{thm:bif} implies that the monodromy group is independent of the chosen morsification.

\begin{sloppypar}

As a corollary of the irreducibility of the discriminant (Theorem~\ref{thm:irr}(iv)) and Theorem~\ref{thm:bif} we obtain the following result which was first proved by Gabrielov \cite{Ga74a} and independently by Lazzeri \cite{Laz73, Laz74}. 

\end{sloppypar}

\begin{corollary}[Gabrielov, Lazzeri] \label{cor:connected}
The Coxeter-Dynkin diagram with respect to a weakly distinguished system $(\delta_1,\ldots,\delta_\mu)$ of vanishing cycles is a
connected graph.
\end{corollary}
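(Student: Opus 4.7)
The plan is to argue by contradiction. Suppose the Coxeter-Dynkin diagram of $(\delta_1,\ldots,\delta_\mu)$ decomposes as a disjoint union of two nonempty subgraphs, so that $\{\delta_1,\ldots,\delta_\mu\}$ splits as $V_1\sqcup V_2$ with $\langle \delta_i,\delta_j\rangle=0$ whenever $\delta_i\in V_1$ and $\delta_j\in V_2$. Since $(\delta_1,\ldots,\delta_\mu)$ is a basis of $M$ by Corollary~\ref{cor:weakbasis}, we obtain an orthogonal decomposition $M=M_1\oplus M_2$, where $M_k$ is the $\ZZ$-span of $V_k$. The Picard-Lefschetz formula (Theorem~\ref{thm:P-L}) shows that $h_{\delta_i}$ acts as the identity on the orthogonal complement of $\delta_i$; consequently each $h_{\delta_i}$ preserves both $M_1$ and $M_2$. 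Therefore the subgroup of $\on{Aut}(M)$ generated by $h_{\delta_1},\ldots,h_{\delta_\mu}$ respects the splitting $M=M_1\oplus M_2$.

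The crucial step is to establish that the generators $h_{\delta_1},\ldots,h_{\delta_\mu}$ are pairwise conjugate inside $\Gamma$. Although the simple loops $\omega_i$ are not conjugate in the free group $\pi_1(\Delta',\eta)$, the morsification embeds $\Delta'$ into $S\setminus D$, sending each $\omega_i$ to a meridian loop around a smooth point of the discriminant $D$. By Theorem~\ref{thm:irr}(iv), $D$ is irreducible, hence its smooth locus is (path-)connected, and consequently any two meridian loops around smooth points of $D$ are conjugate in $\pi_1(S\setminus D,s)$. Applying Theorem~\ref{thm:bif} to identify the image of $\pi_1(S\setminus D,s)$ with $\Gamma$, we conclude that $h_{\delta_1},\ldots,h_{\delta_\mu}$ are pairwise conjugate in $\Gamma$.

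To reach the contradiction, I would use the elementary identity $g\, h_\delta\, g^{-1}=h_{g\delta}$ for any $g\in\on{Aut}(M)$, combined with the fact that $h_\delta$ determines $\delta$ up to sign (for $n$ even, $h_\delta$ is a reflection whose fixed hyperplane is $\delta^\perp$; for $n$ odd, the image of $h_\delta-\on{id}$ is $\ZZ\delta$, and the normalization $\langle\delta,\delta\rangle$-value forces uniqueness up to sign). Choosing $\delta_i\in V_1$ and $\delta_j\in V_2$, the conjugacy gives $g\in\Gamma$ with $g\delta_j=\pm\delta_i$. But $\Gamma$ preserves $M_2$, so $g\delta_j\in M_2$, whereas $\pm\delta_i\in M_1$ is linearly independent from $M_2$ (as the $\delta_k$'s form a basis). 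This contradiction forces the diagram to be connected.

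The main obstacle is the second paragraph: one must carefully cross the bridge from the morsification's base $\Delta$ (where the conjugacy statement is simply false in $\pi_1(\Delta',\eta)$) to the full parameter space $S$ of the universal unfolding, where the irreducibility of $D$ can be leveraged. This passage relies essentially on the nontrivial identification of the two a priori different monodromy groups provided by Theorem~\ref{thm:bif}, and thereby ultimately on the irreducibility statement of Theorem~\ref{thm:irr}(iv).
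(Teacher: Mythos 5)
Your proof is correct and is exactly the route the paper intends: the corollary is stated there as a consequence of the irreducibility of the discriminant (Theorem~\ref{thm:irr}(iv)) together with Theorem~\ref{thm:bif}, via the conjugacy of all meridians of an irreducible hypersurface and the identity $g\,h_\delta\,g^{-1}=h_{g\delta}$, which is precisely your second and third paragraphs. The one step to watch is your claim for $n$ odd that $h_\delta$ determines $\delta$ up to sign: this fails if $\delta$ lies in the radical of the skew-symmetric form (then $h_\delta=\on{id}$), but that degenerate case is easily excluded, either by first stabilizing to $n\equiv 2\pmod 4$ using Theorem~\ref{theo:stab} (which preserves connectivity of the diagram), or by the Tyurina--Siersma deformation argument quoted after the corollary, which produces two vanishing cycles with intersection number $1$ whenever $\mu\geq 2$, so that not all $h_{\delta_i}$ can be the identity.
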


One can show that if 0 is neither a regular nor a non-degenerate critical point of $f$, then there are two vanishing cycles $\delta, \delta'$ of $f$ with $\langle \delta, \delta' \rangle =1$. This follows from the following result due to G.~N.~Tyurina \cite[Theorem~1]{Ty68} and D.~Siersma \cite[Proposition~(8.9)]{Si74} (see also  \cite[Theorem~3.23]{AGV88}, \cite[5.9]{Eb07}) and the fact that, if 0 is neither a regular nor a non-degenerate critical point of $f$, then $f$ deforms to the singularity $g$ with $g(z)=z_0^3+z_1^2+ \cdots + z_n^2$.

\begin{theorem}[Tyurina, Siersma] 
Let $f_t : (\CC^{n+1},0) \to (\CC,0)$, $t \in [0,1]$, be a continuous deformation of the singularity $f_0=f$ with $\mu(f_0)=\mu$, $\mu(f_t)=\mu'$ for $0 < t \leq 1$. Then one has $\mu \geq \mu'$, one has a natural inclusion of the Milnor lattice $M_{f_t}$ of $f_t$ in the Milnor lattice $M_{f_0}$ of $f_0$, and a distinguished basis of $f_t$ can be extended to a distinguished basis of $f_0$.
\end{theorem}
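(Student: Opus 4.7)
The plan is to realise the deformation $f_t$ as a path in the base $S$ of the universal unfolding of $f_0$ constructed in Theorem~\ref{thm:irr}, and to produce a single morsification that serves simultaneously as a morsification of $f_t$ in a small ball and of $f_0$ in a larger ball. By universality of $F : V \times U \to \CC$ there is, after absorbing a family of right-equivalences fixing $0$, a path $u : [0,1] \to U$ with $u(0)=0$ such that $f_t = F(\cdot, u(t))$. Choose radii $0 < \eps' \ll \eps$ so that $B_{\eps'}$ is a Milnor ball for the germ of $f_t$ at $0$ for $t$ small and $B_\eps$ is a Milnor ball for $f_0$. Semicontinuity of the Milnor number applied to the family $\{f_t\}$ inside $B_\eps$ gives finitely many critical points $q_1(t)=0, q_2(t), \dots, q_r(t)$ of $f_t$ in $B_\eps$ whose local Milnor numbers sum to $\mu$; by hypothesis the one at the origin contributes $\mu'$, so the rest lie in $B_\eps\setminus B_{\eps'}$ and contribute $\mu-\mu'\ge 0$, already yielding $\mu\ge\mu'$.

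Next I would perturb $f_t$ by a small further unfolding parameter to obtain a Morse function $\tilde f_t$ with exactly $\mu'$ non-degenerate critical points inside $B_{\eps'}$ and $\mu-\mu'$ non-degenerate critical points in $B_\eps\setminus B_{\eps'}$, all with pairwise distinct critical values. Because $\tilde f_t$ remains close to $f_0$ in the universal unfolding, it is simultaneously a morsification of $f_t$ inside $B_{\eps'}$ and of $f_0$ inside $B_\eps$. I then pick concentric discs $\Delta'\subset\Delta\subset\CC$ around $0$ with a common non-critical base point $\eta\in\partial\Delta'$ such that the $\mu'$ critical values coming from the interior critical points lie in $\Delta'$ and the remaining $\mu-\mu'$ critical values lie in $\Delta\setminus\overline{\Delta'}$.

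Now choose a distinguished system $(\gamma_1,\dots,\gamma_{\mu'})$ of paths for the morsification inside $B_{\eps'}$ entirely contained in $\overline{\Delta'}$, and extend it by paths $\gamma_{\mu'+1},\dots,\gamma_\mu$ from the outside critical values to $\eta$, arranged so that in the clockwise order at $\eta$ they arrive immediately after the last path of the inner subsystem; this makes $(\gamma_1,\dots,\gamma_\mu)$ a distinguished system in $\Delta$. For $i\le\mu'$ the vanishing cycle $\delta_i$ shrinks to a critical point inside $B_{\eps'}$, and its geometric representative can be chosen to lie in $\tilde f_t^{-1}(\eta)\cap B_{\eps'}$, which is diffeomorphic to the Milnor fiber of $f_t$. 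The inclusion $\tilde f_t^{-1}(\eta)\cap B_{\eps'}\hookrightarrow \tilde f_t^{-1}(\eta)\cap B_\eps$ then induces the desired map $M_{f_t}\to M_{f_0}$, sending the distinguished basis $(\delta_1,\dots,\delta_{\mu'})$ of $M_{f_t}$ to the first $\mu'$ elements of the distinguished basis of $M_{f_0}$ produced by $(\gamma_1,\dots,\gamma_\mu)$. By Theorem~\ref{thm:distbasis} the latter form a $\ZZ$-basis of $M_{f_0}$, so the map is injective, is the natural inclusion, and the distinguished basis of $f_t$ extends to one of $f_0$.

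The step I expect to be the main obstacle is the construction of the extended distinguished system in the third paragraph: the clockwise arrival order at $\eta$ required by the definition is a subtle combinatorial condition, and one must verify that the outside paths can be inserted without violating the ordering on the inner subsystem. This is geometrically clear but requires an explicit isotopy argument; concretely, one can push $\eta$ radially outward to a point on $\partial\Delta$, reroute the inner paths to end at this new basepoint through a short connecting arc, and then add the outer paths so that they approach $\eta$ from angles not used by the inner paths. A secondary point to check is that for $i\le\mu'$ parallel transport of the local vanishing sphere along $\gamma_i$ remains inside $B_{\eps'}$, which follows because $\tilde f_t$ restricted to $B_{\eps'}$ is already a morsification with critical values in $\Delta'$ and hence defines a locally trivial fibration over $\Delta'\setminus\{s_1,\dots,s_{\mu'}\}$.
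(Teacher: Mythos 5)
Your overall strategy is the standard one --- it is essentially the argument behind the references the paper cites for this statement (the paper itself, being a survey, gives no proof of it): realize $f_t$ inside the unfolding of $f_0$, compare two Milnor balls $B_{\eps'}\subset B_\eps$, use conservation of the total Milnor number of the critical points splitting off to get $\mu\ge\mu'$, and build a distinguished path system for a common morsification whose first $\mu'$ paths compute a distinguished system for the germ $f_t$ inside the small ball. The definition of the inclusion $M_{f_t}\to M_{f_0}$ via the inclusion of fibers, and the injectivity argument via Theorem~\ref{thm:distbasis} (the images are part of a $\ZZ$-basis of $M_{f_0}$), are both correct.

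There is, however, one step that does not hold as written: the assertion that you can choose concentric discs $\Delta'\subset\Delta$ so that the $\mu'$ critical values coming from the critical points inside $B_{\eps'}$ lie in $\Delta'$ while the remaining $\mu-\mu'$ critical values lie in $\Delta\setminus\overline{\Delta'}$. An outer critical point of $f_t$ can have critical value exactly $0$: for $f_0=x^4+y^2$ and $f_t=x^2(x-t)^2+y^2$ the point $(t,0)$ is a critical point of $f_t$ outside $B_{\eps'}$ with critical value $0$, so after a further small Morse perturbation its critical value and the inner one are both of the order of that perturbation and cannot in general be separated by an annulus centered at $0$. Fortunately the separation is not needed. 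The outer critical points lie outside $B_{\eps'}$, hence their values are not critical values of the restricted fibration $\tilde f_t|_{B_{\eps'}}$; what you actually need is that the inner path system, drawn in a value disc $\Delta'$ over which $B_{\eps'}$ is a fibration ball for $\tilde f_t$, avoids the finitely many outer critical values that may happen to lie in $\Delta'$ --- a condition you can always arrange. The transport of the inner vanishing spheres then stays in $B_{\eps'}$, and your own device of pushing the basepoint out to $\partial\Delta$ along a connecting arc (chosen to miss all critical values), slightly separating the composed inner paths into a pencil, and inserting the outer paths so that they arrive at $\eta$ clockwise after that pencil, completes the construction. With that correction the proof goes through.
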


From these results one obtains another corollary of the irreducibility of the discriminant (cf.\ \cite[Theorem~3.4]{AGV88}, \cite[Proposition~5.20]{Eb07}): 

\begin{corollary} \label{cor:vanishing}
If not both {\rm (i)} $n$ is odd and {\rm (ii)} $0$ is a non-degenerate critical point of
 $f$,
then the set of vanishing cycles $\Lambda^*$ is the only $\Gamma$-orbit, i.e.,  the monodromy group $\Gamma$ acts transitively on $\Lambda^\ast$.
\end{corollary}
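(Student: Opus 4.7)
The plan is to combine the irreducibility of the discriminant $D$ from Theorem~\ref{thm:irr}(iv) with the remark preceding the Tyurina-Siersma theorem, which furnishes a pair of vanishing cycles with intersection number $\pm 1$ whenever $0$ is not a non-degenerate critical point of $f$.

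First I would parametrize $\Lambda^*$ by the smooth locus $D^{\mathrm{sm}}$ of the discriminant, which is connected because $D$ is irreducible. To each $p \in D^{\mathrm{sm}}$ one associates, via a small meridian loop $\omega_p$ around $p$ joined to the basepoint $s$, an element of $\pi_1(S \setminus D, s)$ whose image under $\rho$ is a Picard-Lefschetz transformation $h_{\delta(p)}$; the vanishing cycle $\delta(p) \in \Lambda^*$ is determined up to sign. Conversely, every $\delta \in \Lambda^*$ arises this way, since the critical values of any morsification are the transverse intersections of the corresponding line in $S$ with $D$, generically smooth points of $D$. Using the connectedness of $D^{\mathrm{sm}}$, for any two $p, p' \in D^{\mathrm{sm}}$ one finds an arc in $D^{\mathrm{sm}}$ and a continuous family of small meridians along it, all conjugated to $s$, producing $g \in \pi_1(S \setminus D, s)$ that carries $\omega_p$ to $\omega_{p'}$ by conjugation. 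Applying $\rho$ and Theorem~\ref{thm:bif} gives $\rho(g) \in \Gamma$ with $\rho(g) h_{\delta(p)} \rho(g)^{-1} = h_{\delta(p')}$, hence $\rho(g)(\delta(p)) = \pm \delta(p')$. Thus $\Lambda^* \subseteq \Gamma \cdot \delta \cup \Gamma \cdot (-\delta)$ for any fixed $\delta \in \Lambda^*$.

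It remains to merge these two candidate orbits by showing $-\delta \in \Gamma \cdot \delta$. For $n$ even this is immediate from Theorem~\ref{thm:P-L} and Proposition~\ref{prop:self}, which yield $h_\delta(\delta) = -\delta$. For $n$ odd the hypothesis forces $0$ not to be a non-degenerate critical point, activating the Tyurina-Siersma input, which produces $\delta, \delta' \in \Lambda^*$ with $\langle \delta, \delta' \rangle = \pm 1$. A direct computation with Theorem~\ref{thm:P-L} then shows that $(h_\delta h_{\delta'})^3$ acts as $-\mathrm{id}$ on the rank-two sublattice $\ZZ\delta + \ZZ\delta'$, so this element of $\Gamma$ sends $\delta$ to $-\delta$; $\Gamma$-conjugation then transports the sign reversal to every cycle in $\Gamma \cdot \delta$.

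The main obstacle is the last step in the odd-dimensional case: since $\langle \delta, \delta \rangle = 0$, the reflection $h_\delta$ fixes $\delta$, and no single Picard-Lefschetz transformation can reverse its sign. The Tyurina-Siersma ingredient is essential here, providing the partner cycle $\delta'$ that generates together with $\delta$ an $A_2$-type sublattice on which $h_\delta h_{\delta'}$ has order six, producing the orientation-reversing element of the monodromy group needed to collapse the two candidate orbits of Step~2 into one.
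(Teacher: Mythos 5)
Your argument is correct and is essentially the proof the paper points to (irreducibility of the discriminant plus Theorem~\ref{thm:bif} plus the Tyurina--Siersma input): conjugacy of meridians along the connected smooth locus of $D$ gives transitivity up to sign, and the sign is removed by $h_\delta(\delta)=-\delta$ for $n$ even and by the order-six element $h_\delta h_{\delta'}$ with $(h_\delta h_{\delta'})^3=-\mathrm{id}$ on $\ZZ\delta+\ZZ\delta'$ for $n$ odd, a computation that checks out. One minor remark: in the skew-symmetric case the inference ``$h_{\rho(g)\delta(p)}=h_{\delta(p')}$, hence $\rho(g)\delta(p)=\pm\delta(p')$'' requires knowing the vanishing cycles are not in the radical of the intersection form (a transvection determines its vector only up to sign when that vector pairs nontrivially with $M$), so it is cleanest to obtain this identity directly from the geometric transport of the vanishing cycle along your continuous family of meridians, as you also indicate.
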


Using these results, K.~Saito \cite{Sa82} showed that the monodromy group $\Gamma$ determines the Milnor lattice $M$.

We can also deduce from these results that the classical monodromy operator acts irreducibly (cf.\ \cite[Theorem~3.5]{AGV88}). (An earlier result for curves was obtained by C.~H.~Bey \cite{Bey72a, Bey72b}.)

\begin{corollary}
Let $(\delta_1, \ldots, \delta_\mu)$ be a distinguished basis of $f$ and let $I$ be a subset of the set of indices $I_0=\{1, \ldots , \mu\}$ such that the linear span of the basis elements $\delta_i$ with $i \in I$ is invariant under the classical monodromy operator $h_*$. Then either $I = \emptyset$ or $I=I_0$.
\end{corollary}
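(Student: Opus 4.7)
The plan is to reduce the conclusion to a contradiction with the connectedness of the Coxeter--Dynkin diagram (Corollary~\ref{cor:connected}). Assume $\emptyset \neq I \subsetneq I_0$ and that $V_I := \mathrm{span}_\ZZ(\delta_i : i \in I)$ is $h_*$-invariant; set $J := I_0 \setminus I$ and $s_i := h_{\delta_i}$. The goal is to prove $\langle \delta_i, \delta_j \rangle = 0$ for every $i \in I$ and $j \in J$, which forces the Coxeter--Dynkin diagram of the distinguished basis to split into the two nonempty subdiagrams indexed by $I$ and $J$, contradicting Corollary~\ref{cor:connected}.

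Enumerating $J = \{j_1 < \cdots < j_t\}$, I would proceed by reverse induction on $\ell$ from $t$ down to $1$, establishing at each stage the assertion $P(\ell)$: $\langle \delta_k, \delta_{j_\ell} \rangle = 0$ for all $k \in I$. The driver is the identity $h_*(\delta_k) = s_1 s_2 \cdots s_\mu(\delta_k)$ together with $h_*$-invariance, which forces the $\delta_{j_\ell}$-coefficient of $h_*(\delta_k)$ to vanish whenever $k \in I$. Setting $\beta_q(k) := s_q s_{q+1} \cdots s_\mu(\delta_k)$ and observing via the Picard-Lefschetz formula (Theorem~\ref{thm:P-L}) that the operators $s_r$ with $r < j_\ell$ leave the $\delta_{j_\ell}$-coefficient untouched, this coefficient equals $-(-1)^{n(n-1)/2}\langle \beta_{j_\ell+1}(k), \delta_{j_\ell}\rangle$.

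The crux is a structural sub-lemma: under the hypothesis that $P(\ell')$ holds for all $\ell' > \ell$, one has
\[
\beta_q(k) \in \mathrm{span}\bigl(\delta_k,\, \delta_m : m \in I,\, m \geq q\bigr)
\qquad \text{for all } k \in I,\ q \geq j_\ell + 1,
\]
i.e.\ the intermediate vectors never pick up components along $\delta_m$ with $m \in J$, $m > j_\ell$. This is proved by a secondary reverse induction on $q$ starting from $\beta_{\mu+1}(k) = \delta_k$; the only nontrivial case is $q = j_{\ell'} \in J$ with $\ell' > \ell$, in which the $\delta_q$-coefficient newly introduced by $s_q$ is proportional to $\langle \beta_{q+1}(k), \delta_q\rangle$, which by the previous step of the sub-induction is a linear combination of intersection numbers $\langle \delta_{m'}, \delta_q\rangle$ with $m' \in I$, all of which vanish by $P(\ell')$.

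With the sub-lemma available, $\langle \beta_{j_\ell+1}(k), \delta_{j_\ell}\rangle$ becomes, for each $k \in I$, a linear combination of $\langle \delta_m, \delta_{j_\ell}\rangle$ with $m \in I$. Letting $k$ range over $I \cap \{j_\ell+1,\ldots,\mu\}$ produces a homogeneous linear system in the unknowns $\langle \delta_m, \delta_{j_\ell}\rangle$ for $m \in I$, $m > j_\ell$; the coefficient matrix represents the restriction (over $\QQ$) of the invertible operator $s_{j_\ell+1}\cdots s_\mu$ to its (by the sub-lemma) invariant subspace $\mathrm{span}_\QQ(\delta_m : m \in I,\, m > j_\ell)$, and such a restriction to a finite-dimensional invariant subspace remains invertible. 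Hence $\langle \delta_m, \delta_{j_\ell}\rangle = 0$ for $m \in I$, $m > j_\ell$, and the equations for $k \in I$ with $k < j_\ell$ then collapse to $\langle \delta_k, \delta_{j_\ell}\rangle = 0$, closing the induction. The main obstacle is the sub-lemma, since the single block condition $h_*(V_I) \subseteq V_I$ must be shown to propagate through every intermediate stage of the iterated cascade of Picard-Lefschetz transformations and to force the vanishing of the correct coefficients; once this bookkeeping is done, the rest is straightforward linear algebra combined with Corollary~\ref{cor:connected}.
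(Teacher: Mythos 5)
Your proposal is correct and takes essentially the same route the paper points to for this statement: the paper gives no proof of its own but says the corollary is deduced from the connectedness of the Coxeter--Dynkin diagram (Corollary~\ref{cor:connected}) together with \cite[Theorem~3.5]{AGV88}, and your argument --- reducing to the vanishing of $\langle \delta_i,\delta_j\rangle$ for $i\in I$, $j\notin I$ by tracking, via the triangular cascade of Picard--Lefschetz factors of the Coxeter element $h_*=h_{\delta_1}\cdots h_{\delta_\mu}$ and downward induction over the complementary indices, the $\delta_j$-coefficients of $h_*(\delta_k)$ --- is precisely that standard argument. I see no gaps; the two points needing care (that the intermediate vectors acquire no components along $\delta_m$ with $m\in J$ above the current index, and that the invertibility of the restricted operator lets you pass from the vanishing of $\langle \beta_{j_\ell+1}(k),\delta_{j_\ell}\rangle$ to the vanishing of the individual intersection numbers) are both handled correctly.
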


\begin{corollary}
If the classical monodromy operator of a singularity is the multiplication by $\pm 1$, then the singularity is non-degenerate.
\end{corollary}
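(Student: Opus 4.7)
The plan is to deduce this as a straightforward consequence of the previous irreducibility corollary together with the characterization $\mu = 1$ of non-degenerate critical points. So I would first note that for an isolated hypersurface singularity $f$ the critical point $0$ is non-degenerate if and only if the Milnor number $\mu$ equals $1$, since $\mu = \dim_{\CC}\calO_{n+1}/(\partial f/\partial z_0,\ldots,\partial f/\partial z_n)$ and this Jacobian algebra is one-dimensional precisely when the Hessian is non-singular. Thus the claim reduces to showing $\mu = 1$.

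Next I would fix a distinguished basis $(\delta_1,\ldots,\delta_\mu)$ of $M$, which exists by Theorem~\ref{thm:distbasis}. The hypothesis $h_\ast = \pm\,\mathrm{id}_M$ implies trivially that every $\ZZ$-submodule of $M$ is invariant under $h_\ast$; in particular the span $\langle\delta_i : i \in I\rangle_{\ZZ}$ is $h_\ast$-invariant for every subset $I \subset I_0 = \{1,\ldots,\mu\}$. The preceding corollary asserts that such a span can be $h_\ast$-invariant only when $I = \emptyset$ or $I = I_0$. Applying this with $I = \{1\}$ forces $\{1\} = I_0$, i.e., $\mu = 1$.

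Combining the two steps gives the non-degeneracy of the critical point, proving the corollary. The only subtle point worth double-checking is the equivalence between $\mu = 1$ and non-degeneracy, which is standard but must be invoked to make the logical bridge from the lattice-theoretic conclusion of the previous corollary to the analytic conclusion of the present one; all the geometric content (connectedness and irreducibility coming from the irreducibility of the discriminant via Theorem~\ref{thm:irr} and Theorem~\ref{thm:bif}) has already been packaged into the irreducibility corollary, so no further topological work is required here.
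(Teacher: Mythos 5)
Your proof is correct, but it takes a genuinely different route from the paper's. The paper obtains this corollary from A'Campo's theorem ${\rm tr}\, h_* = (-1)^n$: if $h_*$ is multiplication by $\varepsilon=\pm 1$ on $M$, then ${\rm tr}\, h_* = \varepsilon\mu$, hence $\mu=|{\rm tr}\, h_*|=1$, and $\mu=1$ is equivalent to non-degeneracy of the critical point exactly as in your first step. You instead derive $\mu=1$ from the preceding corollary on the irreducibility of the classical monodromy operator, observing that $h_*=\pm\,{\rm id}_M$ leaves the span of $\delta_1$ alone invariant, which forces $\{1\}=I_0$. Both reductions to $\mu=1$ are sound; the difference lies in the input. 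Your argument stays entirely within the chain of consequences of the irreducibility of the discriminant (connectedness of the Coxeter-Dynkin diagram, transitivity of $\Gamma$ on $\Lambda^*$), on which the irreducibility corollary is built, whereas the paper follows A'Campo's original deduction, which rests on his computation of the Lefschetz numbers of the geometric monodromy --- a separate, resolution-theoretic ingredient that yields strictly more information (the exact value of the trace). Given where the corollary sits in the text, your derivation is arguably the more economical and self-contained one; the only implicit assumption is that $0$ is genuinely a critical point, so that $\mu\geq 1$ and the index set $\{1\}$ you feed into the irreducibility corollary is non-empty.
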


This was first proved by A'Campo \cite[Th\'eor\`eme~2]{A'C73} as an answer to a question of Sebastiani. It was deduced from the following result.

\begin{theorem}[A'Campo]
The trace of the classical monodromy operator of $f$ is 
\[
{\rm tr}\, h_* =(-1)^n.
\]
\end{theorem}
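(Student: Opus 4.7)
The plan is to derive the trace formula by applying the Lefschetz fixed point theorem to the geometric monodromy $h: X_\eta \to X_\eta$, reducing the computation to showing that the Lefschetz number of $h$ vanishes.

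First I would exploit the fact that $X_\eta$ has the homotopy type of a wedge of $\mu$ spheres $S^n$, so its integral homology is concentrated in degrees $0$ and $n$, with $H_0(X_\eta) = \ZZ$ and $H_n(X_\eta) = M$. Since $h$ preserves the (connected) Milnor fiber, it acts trivially on $H_0$, and the Lefschetz number decomposes as
\[
\Lambda(h) = 1 + (-1)^n \, {\rm tr}(h_* \mid M).
\]
With this in hand, the trace formula is equivalent to the vanishing assertion $\Lambda(h) = 0$, which is A'Campo's main theorem.

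To prove $\Lambda(h) = 0$, the approach is to exhibit a smooth representative of the geometric monodromy with no fixed points on $X_\eta$; the Lefschetz fixed point theorem then gives $\Lambda(h) = \chi({\rm Fix}(h)) = 0$ directly. A'Campo builds such a representative from an embedded resolution $\sigma: Y \to (\CC^{n+1}, 0)$ of $f^{-1}(0)$, with total transform $\sigma^*(f^{-1}(0)) = \sum_i m_i E_i$ a simple normal crossings divisor with multiplicities $m_i \geq 1$. In local coordinates adapted to a stratum $E_{i_1}^\circ \cap \cdots \cap E_{i_k}^\circ$ one may arrange $f \circ \sigma = u \cdot x_1^{m_{i_1}} \cdots x_k^{m_{i_k}}$ for a local unit $u$, and on the piece of the Milnor fiber sitting over this chart a natural model for the monodromy is the rotation $(x_1, \ldots, x_k) \mapsto (x_1 e^{2\pi \sqrt{-1}/m_{i_1}}, \ldots, x_k e^{2\pi \sqrt{-1}/m_{i_k}})$, which is manifestly fixed-point free.

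The hard part is the globalization: one must patch these local rotation models into a single smooth diffeomorphism isotopic to $h$ that has no fixed points on all of $X_\eta$. At normal crossings where divisors with distinct multiplicities meet, the local rotation angles do not a priori agree and must be interpolated via a partition of unity subordinate to the stratification of the exceptional locus, while preserving the fixed-point-freeness across strata. This is the substance of A'Campo's carousel construction. Once this representative is in place, the Lefschetz fixed point formula yields $\Lambda(h) = 0$, and combining with the homological decomposition above gives the stated formula for ${\rm tr}(h_*)$.
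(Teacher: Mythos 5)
The paper itself gives no proof of this theorem --- it is stated with a bare reference to A'Campo's article --- so there is nothing internal to compare against; your overall strategy (apply the Lefschetz fixed point theorem to a fixed-point-free representative of the geometric monodromy constructed from an embedded resolution) is indeed the strategy of A'Campo's original proof. But as written your argument has a gap at precisely its central step, and it is exactly the place where the hypothesis that $0$ is a \emph{critical} point of $f$ must enter --- a hypothesis you never invoke. You allow multiplicities $m_i\geq 1$ and declare the local rotations ``manifestly fixed-point free'', but a rotation by $2\pi/m_i$ with $m_i=1$ is the identity, and the strict transform of $f^{-1}(0)$ always carries multiplicity $1$. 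The essential point is that ${\rm grad}\, f(0)=0$ forces ${\rm mult}_0 f\geq 2$, hence every component of the exceptional divisor has multiplicity $\geq 2$; since the lifted Milnor fiber lies in $\sigma^{-1}(B_\eps)$, a neighborhood of $\sigma^{-1}(0)\subset\bigcup_i E_i$, each of its points is adjacent to such a component, and one arranges the monodromy to rotate the corresponding transverse coordinate by a nontrivial root of unity. Without this input your argument would equally ``prove'' $\Lambda(h)=0$ at a regular point, where in fact the Milnor fiber is contractible and $\Lambda(h)=1$. (A smaller slip: for a $k$-fold crossing the rotation angles must satisfy $\sum_j m_{i_j}\alpha_j=1$, so taking $\alpha_j=1/m_{i_j}$ for every $j$ is not a model of the monodromy once $k\geq 2$.)

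Separately, your reduction does not deliver the formula you set out to prove. From your own identity $\Lambda(h)=1+(-1)^n\,{\rm tr}(h_*\mid M)$ together with $\Lambda(h)=0$ one gets ${\rm tr}\, h_*=(-1)^{n+1}$, not $(-1)^n$; with the stated value $(-1)^n$ one would instead obtain $\Lambda(h)=2$, so the claimed ``equivalence'' is false as written. In fact $(-1)^{n+1}$ is the correct value: for $f=z_0^2+\cdots+z_n^2$ the Picard-Lefschetz formula gives $h_*(\delta)=(-1)^{n+1}\delta$ on $M\cong\ZZ$, and for $f=z^3$ with $n=0$ the trace of the cyclic permutation on $\widetilde H_0$ of three points is $-1$. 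The sign in the survey's statement thus appears to be a misprint, but you should have noticed that the displayed identity in your first step contradicts the formula you claim to be proving.
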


The {\em corank}\index{corank} of a singularity $f$ is the corank of the Hesse matrix of $f$. Using a result of Deligne (see \cite{A'C75c}), the author proved the following result \cite[Proposition~5]{Eb96}.

\begin{proposition}
Let $n \equiv 2\  \mbox{mod}\ 4$ and let $c(f)$ denote the corank of $f$. Then
\[
{\rm tr}\, h^2_* =(-1)^{c(f)}.
\]
\end{proposition}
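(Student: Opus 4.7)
Plan: I would combine the splitting lemma with the Sebastiani--Thom product formula to reduce the claim to the case in which $f$ has vanishing Hessian at the origin, and then appeal to the cited result of Deligne (recalled in \cite{A'C75c}).

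First, by the splitting lemma, $f$ is right-equivalent to a germ of the form
\[
\tilde f(z_0,\ldots,z_{c-1})+z_c^2+\cdots+z_n^2,
\]
where $\tilde f$ has zero Hessian at the origin, so that $c(\tilde f)=c$. Since the monodromy operator is a right-equivalence invariant up to isomorphism of the Milnor lattice, ${\rm tr}\, h_\ast^2$ is preserved, and we may assume $f$ is in this normal form.

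Second, setting $Q(y)=y_1^2+\cdots+y_s^2$ with $s=n+1-c$, we have $f=\tilde f\oplus Q$, and the Sebastiani--Thom theorem yields $h_{f,\ast}=h_{\tilde f,\ast}\otimes h_{Q,\ast}$. The Morse singularity $Q$ has one-dimensional Milnor lattice, and Proposition~\ref{prop:self} together with Theorem~\ref{thm:P-L} shows that $h_{Q,\ast}$ acts by a scalar $\varepsilon\in\{+1,-1\}$; hence $h_{Q,\ast}^2=\mathrm{id}$. Taking the trace of
\[
h_{f,\ast}^2=h_{\tilde f,\ast}^2\otimes h_{Q,\ast}^2
\]
gives ${\rm tr}\, h_{f,\ast}^2={\rm tr}\, h_{\tilde f,\ast}^2$. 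Since also $c(\tilde f)=c$, the problem is reduced to proving ${\rm tr}\, h_\ast^2=(-1)^c$ for a singularity $\tilde f$ in $c$ variables with vanishing Hessian.

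Third, in this reduced setting I would apply Deligne's formula (recalled in \cite{A'C75c}), which expresses the Lefschetz numbers of the iterates of the geometric monodromy in terms of local resolution data for $\tilde f^{-1}(0)$. Combined with the identity $\mathrm{Lef}(h^2)=1+(-1)^n\,{\rm tr}\, h_\ast^2$ and the hypothesis $n\equiv 2\pmod 4$, this must be shown to yield the single sign $(-1)^c$ in the zero-Hessian case.

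The main obstacle is precisely this last step: matching Deligne's general formula with the clean sign $(-1)^c$. This requires a careful bookkeeping of the contributions of an embedded resolution, using vanishing of the Hessian to ensure that no Morse-type correction survives, and the congruence $n\equiv 2\pmod 4$ to fix the symmetry of the intersection form and the pairing of the eigenvalues of $h_\ast$ under complex conjugation, so that the power sum $\sum_j\lambda_j^2$ collapses to a single sign determined by the corank. Steps one and two are formal applications of standard tools; the genuine content resides in the resolution-theoretic calculation of step three.
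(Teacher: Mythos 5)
Your steps one and two are correct and are precisely the reduction that the cited source uses: the paper itself gives no proof of this proposition beyond pointing to \cite[Proposition~5]{Eb96} and to ``a result of Deligne'' in \cite{A'C75c}, and that argument does begin with the splitting lemma and the Sebastiani--Thom theorem. Since $h_{Q,\ast}$ acts as $\pm1$ on a rank-one lattice, $h_{f,\ast}^2=h_{\tilde f,\ast}^2\otimes \mathrm{id}$ and hence $\mathrm{tr}\,h_{f,\ast}^2=\mathrm{tr}\,h_{\tilde f,\ast}^2$, with $c(\tilde f)=c$ and $\tilde f$ a germ on $\CC^{c}$ with vanishing Hessian. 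Up to this point your write-up is fine.

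The gap is in step three, which is where all the content lies and which you explicitly leave as an unresolved ``obstacle''; moreover the route you sketch for it is not the right one. The result of Deligne recalled in \cite{A'C75c} that is actually needed is not a resolution formula requiring bookkeeping but a clean vanishing theorem: the Lefschetz numbers of the iterates of the monodromy satisfy $\mathrm{Lef}(h^k)=0$ for all $1\le k<\mathrm{mult}(f,0)$. Because $\tilde f$ has vanishing Hessian, $\mathrm{mult}(\tilde f,0)\ge 3$, so $\mathrm{Lef}(h_{\tilde f}^2)=0$ with no computation on a resolution at all. Since $\tilde f$ is a germ on $\CC^{c}$, its Milnor fiber has reduced homology concentrated in degree $c-1$, so the Lefschetz identity reads $0=\mathrm{Lef}(h_{\tilde f}^2)=1+(-1)^{c-1}\,\mathrm{tr}\,h_{\tilde f,\ast}^2$, giving $\mathrm{tr}\,h_{\tilde f,\ast}^2=(-1)^{c}$ and hence the claim (the Morse case $c=0$ is checked directly, as then $M$ has rank one and $\mathrm{tr}\,h_\ast^2=1$). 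Note also two sign slips in your step three: after the reduction the relevant exponent in the Lefschetz identity is $c-1$, not $n$, and the congruence $n\equiv 2\pmod 4$ plays no role in the trace computation itself --- your appeal to it to ``pair the eigenvalues under complex conjugation'' is not what closes the argument.
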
 

A very important result on the classical monodromy is the following theorem.

\begin{theorem}[Monodromy theorem] \label{thm:monodromy}
The classical monodromy of $f$ is quasi-unipotent, i.e., its eigenvalues are roots of unity.
\end{theorem}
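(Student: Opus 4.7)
The plan is to reduce the global statement to explicit local computations using Hironaka's embedded resolution of singularities, and then glue the local information back together via a nearby cycles argument. Concretely, the three steps are: resolve, analyze the monodromy in the simple normal crossings model, and transfer the conclusion to $h_*$ on $\widetilde{H}_n(X_\eta)$.

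First, by Hironaka's theorem I would choose a proper modification $\pi: \widetilde{V} \to V$ of a neighborhood $V$ of $0 \in \CC^{n+1}$ such that $\pi$ is an isomorphism over $V \setminus f^{-1}(0)$ and $\pi^{-1}(f^{-1}(0))$ is a divisor with simple normal crossings, whose irreducible components $E_1, \ldots, E_s$ carry multiplicities $m_1, \ldots, m_s$. Setting $\widetilde{f} := f \circ \pi$, near any point $p \in \pi^{-1}(0)$ lying on components $E_{i_1}, \ldots, E_{i_r}$, the function $\widetilde{f}$ can be written as $u(z)\, z_1^{m_{i_1}} \cdots z_r^{m_{i_r}}$ for a local unit $u$ and suitable analytic coordinates. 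Put $N := \mathrm{lcm}(m_1, \ldots, m_s)$.

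Second, for each local model $g(z) = z_1^{a_1} \cdots z_r^{a_r}$ I would check directly that the associated local monodromy is of finite order dividing $\mathrm{lcm}(a_1, \ldots, a_r)$. One parametrizes a small loop around $0$ by $t \mapsto \eta e^{2\pi i t}$ and lifts it to the action $(z_1, \ldots, z_r) \mapsto (e^{2\pi i t/a_1} z_1, \ldots, e^{2\pi i t/a_r} z_r)$ on the local Milnor fiber; after time $N$ this returns to the identity. In particular the local monodromy is quasi-unipotent, in fact periodic.

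The main obstacle is the passage from these local statements to the global one. The cleanest route is to identify $\widetilde{H}_n(X_\eta; \CC)$ with the hypercohomology of the nearby cycles complex $R\psi_f \underline{\CC}$ on $f^{-1}(0)$ and to exploit the proper base change isomorphism $R\psi_f \underline{\CC} \cong R\pi_* R\psi_{\widetilde{f}} \underline{\CC}$, which is available because $\pi$ is proper. The stalks of $R\psi_{\widetilde{f}} \underline{\CC}$ are explicitly computable from the local monodromy in Step two and carry a monodromy action whose $N$-th power is the identity. A Leray spectral sequence argument then shows that $h_*^N$ is unipotent on $\widetilde{H}_n(X_\eta; \CC)$, so the eigenvalues of $h_*$ are $N$-th roots of unity. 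The more elementary variant, in the spirit of Clemens and Landman, is to cover a tubular neighborhood of $\pi^{-1}(0)$ by open tubes around the components $E_i$ and run the associated Mayer--Vietoris spectral sequence; each $E_1$-term carries quasi-unipotent monodromy by the local computation, and convergence yields quasi-unipotence of $h_*$, with the additional bound that the Jordan blocks have size at most $n+1$.
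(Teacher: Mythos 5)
The paper itself offers no proof of this theorem: being a survey, it only records that ``the usual proofs use a resolution of the singularity'' (citing \cite{EG07}) and that a resolution-free proof exists (citing \cite{Le78}). Your sketch is precisely the standard resolution-based argument that the paper points to, and it is essentially sound: embedded resolution reduces to normal crossings local models, the local monodromy there is of finite order, and either the nearby-cycles/proper-pushforward formalism or the Clemens--Landman covering argument propagates finite order on the local (stalk) level to quasi-unipotence on $\widetilde{H}_n(X_\eta;\CC)$, since a finite filtration whose graded pieces carry a finite-order action forces the eigenvalues on the abutment to be roots of unity, with Jordan blocks bounded by the length of the filtration. One detail to repair in your Step two: the lift $(z_1,\ldots,z_r)\mapsto(e^{2\pi i t/a_1}z_1,\ldots,e^{2\pi i t/a_r}z_r)$ multiplies $g=z_1^{a_1}\cdots z_r^{a_r}$ by $e^{2\pi i r t}$, so it lifts the $r$-fold loop rather than the simple loop $t\mapsto \eta e^{2\pi i t}$; one should instead use $z_j\mapsto e^{2\pi i \lambda_j t}z_j$ with $\sum_j a_j\lambda_j=1$ (e.g.\ $\lambda_j=1/(r a_j)$), after which the geometric monodromy is still periodic, though the precise period may be a multiple of $\mathrm{lcm}(a_1,\ldots,a_r)$ rather than a divisor of it. This affects only the explicit bound on the order of the semisimple part, not the conclusion of quasi-unipotence.
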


For the history of this theorem and further properties of the classical monodromy see the survey article \cite{Eb06}. The usual proofs of Theorem~\ref{thm:monodromy} use a resolution of the singularity, see e.g.\ \cite{EG07} for an instructive one. For a proof which does not use a resolution see \cite{Le78}.

The {\em bifurcation variety}\index{bifurcation variety}\index{variety!bifurcation} ${\rm Bif}$ is the set of all $\lambda \in S$ such that $f_\lambda$ does not have $\mu$ distinct critical values.
Looijenga \cite{Loo74} in 1974 and independently Lyashko (in the same year, but his work was only published later in \cite{Ly79, Ly84}) introduced the following mapping: The {\em Lyashko-Looijenga mapping}\index{Lyashko-Looijenga mapping}\index{mapping!Lyashko-Looijenga} ${\rm LL}$ sends a point $\lambda \in S$ to the unordered collection of critical values of the function $f_\lambda$ or, what amounts to the same thing but is sometimes more convenient, to the polynomial which has these critical values as roots. If $\calP^\mu$ denotes the set of monic polynomials of degree $\mu$, then this is the mapping
\[
\funktion{{\rm LL}}{S}{\calP^\mu}{\lambda}{\prod_{i=1}^\mu (t-s_i)}
\]
where $s_1, \ldots , s_\mu$ are the critical values of the function $f_\lambda$. Let $\Sigma \subset \calP^\mu$ denote the discriminant variety in $\calP^\mu$. Then there exists a neighborhood $U \subset S$ of $0 \in S$ such that ${\rm LL}|_{U \setminus {\rm Bif}} : U \setminus {\rm Bif} \to \calP^\mu \setminus \Sigma$ is locally biholomorphic \cite[Theorem~(1.4)]{Loo74}.

\section{Special singularities} \label{sect:special}
We shall now consider what is known about these invariants for special classes of singularities.

Let $f,g : (\CC^{n+1},0) \to (\CC,0)$ be holomorphic function germs with an isolated singularity at 0. The germs $f$ and $g$ are called {\em right equivalent}\index{right equivalent}\index{equivalent!right} if $f$ is taken to $g$ under (the germ of) a biholomorphic mapping of the domain space which leaves the origin invariant.
The {\em modality}\index{modality} (or {\em module number}\index{module number}) of $f$
 is the smallest number $m$
 for which there exists a representative $p:\calX\to S$ of the universal unfolding $F:(\CC^{n+1}\times\CC^\mu,0)\to(\CC,0)$ of $f$
 such that for all $(z,u) \in \calX$ the function germs $F_u: (\CC^{n+1},z) \to (\CC, F(z,u))$ given by $F_u(z')=F(z',u)$
 fall into finitely many families of right equivalence classes
 depending on at most $m$ (complex) parameters. 
 Singularities of modality 0,1 and 2 are called  {\em simple}\index{simple singularities}\index{singularities!simple}, {\em unimodal}\index{unimodal singularities}\index{singularities!unimodal} (or {\em
 unimodular})\index{unimodular singularities}\index{singularities!unimodular},
 and {\em bimodal}\index{bimodal singularities}\index{singularities!bimodal} (or {\em bimodular}\index{bimodular singularities})\index{singularities!bimodular}, respectively.
 
 V.~I.~Arnold classified the singularities up to modality 2 \cite{Arn75}. He listed certain normal forms. A normal form determines a class of singularities. This class  corresponds to a {\em $\mu$=const stratum}\index{$\mu$=const stratum}: Any two singularities of a $\mu$=const stratum are $\mu$-equivalent, see Sect.~\ref{sect:top} below. By Proposition~\ref{prop:Gab} below, the class $\calD$ is the same for all singularities of a $\mu$=const stratum. Gabrielov \cite{Ga74a} proved that the dimension of the $\mu$=const stratum is equal to the modality of the singularity.
 Arnold found that in the lists of classes, all the classes are split into series which are now called the Arnold series. However, as Arnold writes in \cite{Arn75}, ``although the series undoubtedly exist, it is not at all clear what a series is''. Let us look at Arnold's classification.
\begin{table}
\begin{center}
\begin{tabular}{rlrlrl}
\hline
$A_k$: & $x^{k+1},\ k \geq 1$ & $D_k$: & $x^2y+y^{k-1}, \ k \geq 4$ & & \\
$E_6$: & $x^3+y^4$ & $E_7$: & $x^3+xy^3$ & $E_8$: & $x^3+y^5$\\
\hline
\end{tabular}
\end{center}
\caption{Simple singularities} \label{tab:simple}
\end{table}

Let us first assume that $f:(\CC^{n+1},0) \to (\CC,0)$ defines a simple singularity. Up to stabilization, the simple singuarities are given by the germs of the functions of Table~\ref{tab:simple}.
There are many characterizations of simple singularities, see \cite{Du79}. They are the only singularities where, for $n \equiv 2 \ \mbox{mod} \, 4$, the intersection form is negative definite \cite[Characterization~B5]{Du79}. They are also the only singularities where the set $\calD$ contains a tree \cite[Characterization~B7]{Du79}, \cite{A'C75b, A'C76}. Moreover, this is also the only case where the monodromy group $\Gamma$ is finite \cite[Characterization~B8]{Du79}. The author \cite{Eb18} has recently shown that they are the only singularities where the set $\calB$ is finite.

Let $f:(\CC^{n+1},0) \to (\CC,0)$ define a simple singularity and $n \equiv 2 \ \mbox{mod} \, 4$. Then $\Lambda^*$ is a root system of type $A_k, D_k, E_k$. (Note that the usual bilinear form of \cite{Bou02} has to be multiplied by $-1$.) The Milnor lattice $M$ is the corresponding root lattice, the group $\Gamma$ is the corresponding Weyl group, and the classical monodromy operator $h_\ast$ is a Coxeter element of the corresponding root system. Let $c \in \Gamma$ be a Coxeter element. Define
\[ 
\Xi_c :=\{ (s_1, \ldots, s_k) \, | \, s_i \in \Gamma \mbox{ reflection},\ s_1 \cdots s_k=c \}.
\]
Deligne \cite{Del74} in a letter to Looijenga (with the help of J.~Tits and D.~Zagier) showed the following theorem.

\begin{theorem}[Deligne] \label{thm:Deligne}
The braid group ${\rm Br}_k$ acts transitively on $\Xi_c$.
\end{theorem}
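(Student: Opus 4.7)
The plan is to proceed by induction on the rank $k$. The Hurwitz action is
\[
\alpha_j(s_1,\ldots,s_k) \;=\; (s_1,\ldots,s_{j-1},\, s_j s_{j+1} s_j^{-1},\, s_j,\, s_{j+2},\ldots,s_k),
\]
which preserves the product $s_1\cdots s_k$ and sends reflections to reflections, so it indeed restricts to an action on $\Xi_c$. The case $k=1$ is immediate since $\Xi_c=\{(c)\}$. For the inductive step, fix a reduced expression $c = r_1 r_2 \cdots r_k$ with $r_i=s_{\alpha_i}$ a simple reflection, let $\Gamma' \subset \Gamma$ be the parabolic subgroup generated by $r_2,\ldots,r_k$, and set $c' := r_1^{-1}c = r_2\cdots r_k$, a Coxeter element of $\Gamma'$ in rank $k-1$. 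The goal becomes to show that every tuple in $\Xi_c$ is braid-equivalent to $(r_1,\ldots,r_k)$.

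The useful transformations are the local moves $\alpha_j$ together with the cyclic shift
\[
\sigma := \beta_k\beta_{k-1}\cdots\beta_2 : (s_1,s_2,\ldots,s_k) \longmapsto (s_2,\ldots,s_k,\, c^{-1}s_1c),
\]
which stays inside $\Xi_c$ because $c^{-1}s_1c$ is again a reflection. The key lemma to establish is: for any reflection $t$ in the \emph{reflection interval} $[e,c]_R := \{t \in \Gamma : \ell_R(t)+\ell_R(t^{-1}c)=\ell_R(c)\}$, where $\ell_R$ denotes reflection length, some tuple in the Hurwitz orbit of a given $(s_1,\ldots,s_k) \in \Xi_c$ has $t$ as its first entry. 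Granting this, specialise $t=r_1$: after braiding we obtain $(r_1,t_2,\ldots,t_k)$ with $t_2\cdots t_k=c'$. A standard fact from the theory of non-crossing partitions of $\Gamma$, equivalently the additivity of reflection length along $[e,c]_R$, forces $t_2,\ldots,t_k \in \Gamma'$, so that $(t_2,\ldots,t_k)$ lies in the analogue of $\Xi_{c'}$ for $\Gamma'$. The inductive hypothesis applied to $(\Gamma',c')$ then provides a braid in ${\rm Br}_{k-1}$, embedded into ${\rm Br}_k$ as the subgroup generated by $\alpha_2,\ldots,\alpha_{k-1}$ (which fix the first strand), taking $(t_2,\ldots,t_k)$ to $(r_2,\ldots,r_k)$.

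The crux of the proof is the Key Lemma above. In the letter of Deligne (with Tits and Zagier) to Looijenga, it is handled by reducing to rank-two computations inside the dihedral subgroups $\langle r_i,r_j\rangle$ and then case-checking through the classification $A_k$, $D_k$, $E_6$, $E_7$, $E_8$; the simply-laced hypothesis enters essentially via the fact that any two reflections are $\Gamma$-conjugate, so that the $\sigma$-shifts and local $\alpha_j$-conjugations together exhaust the $\Gamma$-conjugacy class of reflections subject only to the constraint $t \leq c$. A classification-free proof, valid more generally for well-generated complex reflection groups, was later given by Bessis, who identifies the Hurwitz orbits in $\Xi_c$ with connected components of a fibre of a natural covering of the discriminant complement and deduces transitivity from the connectedness of that fibre. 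Either way, the Hurwitz moves themselves are trivial to write down; the substantive point is controlling which reflections can be brought to the first position, and this is where the real work lies.
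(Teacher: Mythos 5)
The paper itself gives no proof of Theorem~\ref{thm:Deligne}: it is a survey, and it only records that the result is due to Deligne's letter to Looijenga \cite{Del74} (with Tits and Zagier) and that the first published proof is Bessis's \cite{Bes03}. Measured against those sources, your proposal is an accurate roadmap of the standard inductive argument rather than a proof. The scaffolding is correct: the Hurwitz moves $\alpha_j$ and their inverses preserve the product and send reflections to reflections, so they act on $\Xi_c$; the cyclic shift $\sigma=\beta_k\cdots\beta_2$ does send $(s_1,\ldots,s_k)$ to $(s_2,\ldots,s_k,c^{-1}s_1c)$; and once a tuple $(r_1,t_2,\ldots,t_k)$ is reached, the fact that every reflection occurring in a reduced reflection factorization of $c'=r_2\cdots r_k$ lies in the standard parabolic $\Gamma'=\langle r_2,\ldots,r_k\rangle$ (Brady--Watt, or \cite[Lemma~1.4.2]{Bes03}) lets the induction descend. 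Two smaller points you should still attend to: the inductive statement must be formulated for possibly reducible groups, since $\Gamma'$ need not be irreducible even when $\Gamma$ is of type $A_k$, $D_k$ or $E_k$; and passing from the irreducible to the reducible case needs the observation that reflections from distinct factors commute and can be shuffled past one another by Hurwitz moves.

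The genuine gap is the Key Lemma: that for every reflection $t$ below $c$ in the absolute order --- in particular for $t=r_1$ --- the Hurwitz orbit of an arbitrary $(s_1,\ldots,s_k)\in\Xi_c$ contains a tuple with first entry $t$. You state it, correctly identify it as the place ``where the real work lies'', and then do not prove it; you only describe how \cite{Del74} (rank-two reductions plus case-checking) and \cite{Bes03} handle it. The remark that single conjugacy of reflections in the simply-laced case lets the shifts ``exhaust'' the admissible first entries is not an argument: the set of first entries reachable by $\sigma$ and the $\alpha_j$ from a given tuple is a priori only some subset of the reflections below $c$, and showing it is all of them is exactly the content of the lemma (note also that the theorem holds for non-simply-laced finite Coxeter groups, so single conjugacy cannot be the essential mechanism). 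As written, the induction does not close without importing this step wholesale from the references --- which is, in fairness, precisely what the survey itself does by citation.
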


From this we obtain the following result. 
\begin {corollary} \label{cor:Deligne}
One has
\[
\calB= \{(\delta_1, \ldots, \delta_k) \in (\Lambda^*)^k \, | \, \langle \delta_1, \ldots, \delta_k \rangle_\ZZ=M, s_{\delta_1} \cdots s_{\delta_k}=h_\ast \}.
\]
\end{corollary}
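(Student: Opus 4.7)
The inclusion $\calB \subseteq \{(\delta_1,\ldots,\delta_k) \in (\Lambda^*)^k \mid \langle \delta_1,\ldots,\delta_k\rangle_\ZZ = M,\ s_{\delta_1}\cdots s_{\delta_k} = h_\ast\}$ is essentially immediate from what has been collected in the paper: a distinguished basis consists of vanishing cycles by definition, spans $M$ over $\ZZ$ by Theorem~\ref{thm:distbasis}, and satisfies $h_{\delta_1}\cdots h_{\delta_k} = h_\ast$ by the Coxeter element identity noted after Proposition~\ref{prop:Bou}. In the case $n \equiv 2 \pmod 4$, each $h_{\delta_i}$ is the reflection $s_{\delta_i}$, so the product condition becomes $s_{\delta_1}\cdots s_{\delta_k} = h_\ast$.

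For the reverse inclusion, the plan is to combine Deligne's transitivity (Theorem~\ref{thm:Deligne}) with the braid action on $\calB$. Fix any distinguished basis $(e_1,\ldots,e_k) \in \calB$; its associated reflection tuple $(s_{e_1},\ldots,s_{e_k})$ lies in $\Xi_{h_\ast}$. Given a tuple $(\delta_1,\ldots,\delta_k)$ in the right-hand set, the $s_{\delta_i}$ are also reflections in the Weyl group $\Gamma$ (since the $\delta_i$ are roots of $\Lambda^\ast$), so $(s_{\delta_1},\ldots,s_{\delta_k}) \in \Xi_{h_\ast}$ as well. By Theorem~\ref{thm:Deligne}, there is a braid $\sigma \in {\rm Br}_k$ carrying the first reflection tuple to the second.

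The key observation is that the braid action on distinguished bases, via the generators $\alpha_j$, induces precisely the standard Hurwitz action on the associated reflection tuples. Indeed, the new $j$-th entry $h_{e_j}(e_{j+1})$ has reflection
\[
s_{h_{e_j}(e_{j+1})} = h_{e_j} s_{e_{j+1}} h_{e_j}^{-1} = s_{e_j} s_{e_{j+1}} s_{e_j}^{-1},
\]
while the new $(j{+}1)$-th entry is $e_j$ with reflection $s_{e_j}$. Thus applying the braid $\sigma$ to the distinguished basis $(e_1,\ldots,e_k)$ yields, by Proposition~\ref{prop:transitive}, another distinguished basis $(f_1,\ldots,f_k) \in \calB$ whose reflection tuple equals $(s_{\delta_1},\ldots,s_{\delta_k})$.

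Since a reflection determines its root up to sign, we obtain $f_i = \varepsilon_i \delta_i$ with $\varepsilon_i \in \{\pm 1\}$. Applying the sign changes $\kappa_i$ (which act on $\calB$ and preserve distinguishedness) for those $i$ with $\varepsilon_i = -1$ produces $(\delta_1,\ldots,\delta_k) \in \calB$, as required. The main conceptual point is the passage from the Hurwitz action on reflection tuples (where Deligne's theorem lives) to the braid action on vanishing cycles (where $\calB$ lives); this is exactly the compatibility displayed in the short computation above, and after it the argument is essentially formal. The spanning condition $\langle \delta_1,\ldots,\delta_k\rangle_\ZZ = M$ is in fact not used in this direction, but it is part of the stated right-hand set and is certainly satisfied by any element of $\calB$.
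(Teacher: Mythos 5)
Your proof is correct and follows exactly the route the paper intends (it states the corollary as an immediate consequence of Theorem~\ref{thm:Deligne}): one direction from Theorem~\ref{thm:distbasis} and the Coxeter element identity, the other from Deligne's transitivity combined with the compatibility of the $\alpha_j$-action on $\calB$ with the Hurwitz action on reflection tuples, followed by sign adjustments via the $\kappa_i$. The only cosmetic quibble is that the step ``applying $\sigma$ to a distinguished basis yields a distinguished basis'' rests on the braid group action on $\calB$ described in Sect.~\ref{sect:braid}, not on Proposition~\ref{prop:transitive} itself.
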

The sets $\Xi_c$, $\calB$, and $\calD$ are finite sets. The cardinality of these sets was calculated in the letter of Deligne (see also \cite{Vo85b, Kl89}).

\begin{example}  For $E_8$ one has $|\calD_{E_8}|=2^83^45^6=324\,000\,000$.
\end{example}

The first published proof of Theorem~\ref{thm:Deligne} is due to D.~Bessis and can be found in \cite{Bes03}. This theorem has been generalized and it has also applications outside of singularity theory, see \cite{BDSW14}. K.~Igusa and R.~Schiffler generalized this result to arbitrary Coxeter groups of finite rank \cite[Theorem~1.4]{IS10} (see also \cite{BDSW14, BGRW17}). Recently, B.~Baumeister, P.~Wegener, and S.~Yahiatene \cite{BWY19} generalized it to certain extended Weyl groups (see below). The theorem has applications in the theory of Artin groups, see \cite{Bes03, Di06}, and in the representation theory of algebras, see \cite{IS10, Ig11, HK16}.

Let $\RR^k$ be a vector space on which the Weyl group $W=\Gamma$ acts in a canonical way and let $\CC^k= \RR^k \otimes_\RR \CC$ be its complexification. The action of $W$ on $\RR^k$ extends in a natural way to an action of $W$ on $\CC^k$. Let $H$ be be union of the complexifications of the reflection hyperplanes of $W$. Let $S$ be the base space and $D$ the discriminant of the universal unfolding of the simple singularity $f$. Then the pair $(S,D)$ is analytically isomorphic (in a neighborhood of the origin) to the pair $(\CC^k/W, H/W)$ \cite{Arn72}. Brieskorn \cite{Br71} proved that the fundamental group of the space $\CC^k/W \setminus H/W$ is  the generalized Brieskorn braid group $\pi$ \cite{Br73a, BS72} of the Weyl group $W$. He conjectured \cite{Br73a} and Deligne \cite{Del72} proved that this space is in fact a $K(\pi,1)$-space. 
(A $K(\pi,1)$-space is a topological space with fundamental group $\pi$ and trivial higher homotopy groups.) 
From this it follows that the complement $S \setminus D$ is a $K(\pi,1)$-space as well (see also \cite[Theorem~3.9]{AGV88}). Brieskorn asked \cite[Probl\`eme~15]{Br73b} whether this is true in general.

Now we consider the Lyashko-Looijenga map in the case of the simple singularities. Looijenga \cite{Loo74} and Lyashko \cite{Ly79, Ly84} showed that the mapping ${\rm LL}|_{U \setminus {\rm Bif}} : U \setminus {\rm Bif} \to \calP^k \setminus \Sigma$ is a covering of degree
\[
d=\frac{k! N^k}{|\Gamma|}
\]
where $N$ is the Coxeter number of the corresponding root system. I.~S.~Livshits \cite{Li81} determined the Galois group of this covering (see also \cite{Yu99}). Let $p \in \calP^k \setminus \Sigma$. It is well known that
\[
\pi_1(\calP^k \setminus \Sigma,p) \cong {\rm Br}_k.
\]
Therefore the complement of the bifurcation variety of a simple singularity is a $K(\pi,1)$, where $\pi$ is a subgroup of index $d$ in the braid group ${\rm Br}_k$. 

Similar questions were also answered for complex reflection groups, see \cite{Bes15, Rip10, Rip12}.

Looijenga already proved Theorem~\ref{thm:Deligne} in the case $A_k$ \cite[Corollary~(3.8)]{Loo74}. Moreover, in this case, he established a correspondence between generic polynomial coverings of the complex sphere and trees with totally ordered edges. By considering a generalized version of the Lyashko-Looijenga mapping, more general combinatorial results were obtained by Arnold \cite{Arn96}, D.~Zvonkine and S.~K.~Lando \cite{ZL99}, and B.~S.~Bychkov \cite{By15}.

By studying the Lyashko-Looijenga mapping, Jianming Yu \cite{Yu96} determined the number of Seifert matrices with respect to distinguished bases of a simple singularity.

Gusein-Zade \cite{GZ80} gave a characterization of distinguished bases for simple singularities. Let $f: (\CC^{n+1},0) \to (\CC,0)$ define a simple singularity of Milnor number $\mu$. He showed that, if $(\delta_1, \ldots, \delta_\mu)$ is an integral basis of the homology group $M$ in which the matrix of the Seifert form is lower triangular, then $(\delta_1, \ldots, \delta_\mu)$ is a distinguished basis of vanishing cycles. The proof is based on the following result: Let $n$ be even. For any vanishing cycle $\delta$ and any distinguished basis $(\delta_1, \ldots, \delta_\mu)$ for $f$, there exists a distinguished basis $(\delta'_1, \ldots, \delta'_\mu)$ with the first element $\delta'_1= \pm \delta$. H.~Serizawa \cite{Se01} showed that the latter result is false for a non-simple singularity.

The next case are the {\em simple elliptic singularities}\index{simple elliptic singularities}\index{singularities!simple elliptic} (see also \cite{Sa74}). These are the singularities $\widetilde{E}_6$, $\widetilde{E}_7$, and $\widetilde{E}_8$. Up to stabilization, they are given by the one-parameter families of Table~\ref{tab:elliptic}.
\begin{table}
\begin{center}
\begin{tabular}{rll}
\hline
$\widetilde{E}_6$: & $x^3+y ^3+z^3+axyz$, & $a^3+27 \neq 0$ \\
$\widetilde{E}_7$: & $x^2+y^4+z^4+ay^2z^2$, & $a^2 \neq 4$ \\
$\widetilde{E}_8$: & $x^2+y^3+z^6+ay^2z^2$, & $4a^3+27 \neq 0$\\
\hline
\end{tabular}
\end{center}
\caption{Simple elliptic singularities} \label{tab:elliptic}
\end{table}
These singularities can be characterized as follows: For $n \equiv 2 \ \mbox{mod} \, 4$, the intersection form is not negative definite but negative semi-definite \cite[Characterization~C5]{Du79}. Therefore, the simple elliptic singularities are also called the {\em parabolic singularities}\index{parabolic singularities}\index{singularities!parabolic}. The monodromy group is not finite but has polynomial growth \cite[Characterization~C6]{Du79}. (For the notions of polynomial and exponential growth, see e.g.\ \cite{Mi68b}. See also Remark~\ref{rem:hyperbolic} below.) The set $\calB$ is infinite but the set $\calD$ is finite \cite{Eb18}.

Let $f:(\CC^{n+1},0) \to (\CC,0)$ define a simple elliptic singularity of type $\widetilde{E}_k$, $k=6,7,8$, and let $n \equiv 2 \ \mbox{mod} \, 4$. If $M$ is a lattice, denote by
 $M^\#=\on{Hom}(M,\ZZ)$ the dual module and by
  $$
  \funktion{j}{M}{M^\#}{v}{l_v\text{ with }l_v(x)=\langle v,x\rangle,x\in M,}
  $$
the canonical homomorphism.
The Milnor lattice $M$ is the orthogonal direct sum of the root lattice $E_k$ and the radical $\ker j$ which is two-dimensional. The set $\Lambda^*$ of vanishing cycles is an extended affine root system of type $E_k^{(1,1)}$ in the sense of Saito \cite{Sa85}.  The monodromy group $\Gamma$ is the semi-direct product of the group $\ker j \otimes j(M)$ and the Weyl group $W(E_k)$ of $E_k$, where the group $\ker j \otimes j(M)$ acts on $M$ by
$(v \otimes w^\#)(x)=x+w^\#(x) v$ and the action of $W(E_k)$ on $\ker j \otimes j(M)$ is trivial on the first factor and canonical on the second one, see \cite[Proposition~(6.7)]{Loo78}. It follows from this description that the monodromy group has polynomial growth.

P.~Kluitmann extended Corollary~\ref{cor:Deligne} to the simple elliptic singularities \cite{Kl86}. He also calculated the cardinality of $\calD$ for $\widetilde{E}_6$ and $\widetilde{E}_7$. In \cite{Jaw86, Jaw88}, P.~Jaworski considered the Lyashko-Looijenga map for the simple elliptic singularities and showed that the complement of the bifurcation variety of a simple elliptic singularity is again a $K(\pi,1)$ for a certain subgroup of the braid group ${\rm Br}_\mu$ \cite[Corollary~2]{Jaw86}. Recently, C.~Hertling and C.~Roucairol \cite{HR18} used a different approach to study the Lyashko-Looijenga map for the simple and simple elliptic singularities and refined and extended the results of Kluitmann and Jaworski.

For the remaining singularities, the sets $\calB$ and $\calD$ are infinite \cite{Eb18}. Let $f: (\CC^{n+1},0) \to (\CC,0)$ be such a singularity. We assume $n \equiv 2 \ \mbox{mod} \, 4$. The only singularities with a hyperbolic intersection form, i.e., an indefinite form with only one positive eigenvalue, are the singularities of the series $T_{p,q,r}$ with $2 \leq p \leq q \leq r$ and $\frac{1}{p}+\frac{1}{q}+\frac{1}{r} < 1$ \cite{Arn73}. Up to stable equivalence, they are given by the one parameter families
\[
T_{p,q,r}: x^p+y^q+z^r+axyz, \ a \neq 0.
\]
They are also called the {\em hyperbolic singularities}\index{hyperbolic singularities}\index{singularities!hyperbolic}. The simple elliptic and hyperbolic singularities are unimodal singularities. Gabrielov \cite{Ga74b} calculated Coxeter-Dynkin diagrams with respect to distinguished bases for the unimodal singularities. According to \cite{Ga74b}, the simple elliptic and hyperbolic singularities have Coxeter-Dynkin diagrams with respect to distinguished bases in the form of Fig.~\ref{FigTpqr}. 
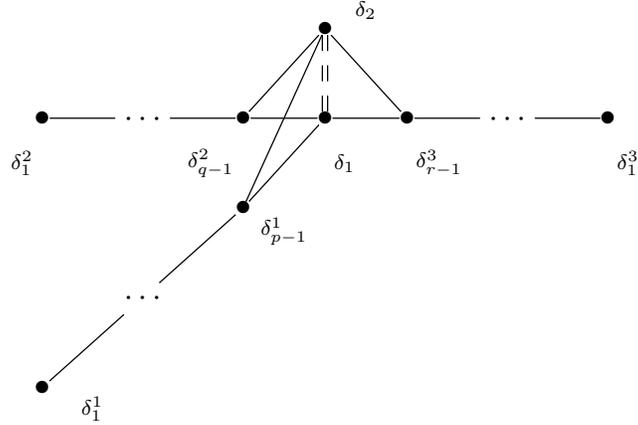
\begin{figure}
$$
\xymatrix{ 
 & & & *{\bullet} \ar@{==}[d] \ar@{-}[dr]  \ar@{-}[ldd] \ar@{}^{\delta_{2}}[r]
 & & &  \\
 *{\bullet} \ar@{-}[r] \ar@{}_{\delta^2_1}[d]  & {\cdots} \ar@{-}[r]  & *{\bullet} \ar@{-}[r] \ar@{-}[ur]   \ar@{}_{\delta^2_{q-1}}[d] & *{\bullet} \ar@{-}[dl] \ar@{-}[r] \ar@{}^{\delta_{1}}[d] & *{\bullet} \ar@{-}[r]  \ar@{}^{\delta^3_{r-1}}[d]  & {\cdots} \ar@{-}[r]  &*{\bullet} \ar@{}^{\delta^3_1}[d]   \\
 & &   *{\bullet} \ar@{-}[dl] \ar@{}_{\delta^1_{p-1}}[r]  & & & & \\
 & {\cdots} \ar@{-}[dl] & & & & & \\
*{\bullet}  \ar@{}_{\delta^1_1}[r] & & & & & &
  } 
$$
\caption{The graph $T_{p,q,r}$} \label{FigTpqr}
\end{figure}
Here $(p,q,r)=(3,3,3), (2,4,4), (2,3,6)$ for $\widetilde{E}_6$, $\widetilde{E}_7$, and $\widetilde{E}_8$ respectively. The Milnor lattice $M$ of a hyperbolic singularity has a one-dimensional radical $\ker j$ generated by the vector $\delta_2 -\delta_1$. By \cite{Ga74b}, the monodromy group $\Gamma$ is the semi-direct product of the group $\ker j \otimes j(M)$ and the Coxeter group corresponding to the graph of Fig.~\ref{FigTpqr} with the vertex $\delta_2$ removed. It can also be described as the extended Weyl group of a generalized root system as defined by Looijenga \cite{Loo80}.

As an application of \cite{BWY19}, one obtains an extension of Corollary~\ref{cor:Deligne} to the hyperbolic singularities.

Looijenga (\cite{Loo80}, \cite[Chapter III.3]{Loo81}) gave a description of the complement of the discriminant of a simple elliptic or hyperbolic singularity as an orbit space $Y/\Gamma$. Using this, H.~van der Lek \cite{vdL83} gave a presentation of the fundamental group of the discriminant complement for such singularities generalizing the results for the simple singularities.

Besides these singularities, there are 14 exceptional unimodal singularities. Equations of these singularities can be found in \cite{Arn75,AGV85}. Coxeter-Dynkin diagrams for these singularities were also calculated by Gabrielov \cite{Ga74b}. He claimed that by  change-of-basis transformations $\alpha_i(j)$ and $\beta_i(j)$, the Coxeter-Dynkin diagrams can be reduced to the ``normal form'' depicted in Fig.~\ref{FigSpqr} for certain triples $(p,q,r)$.
\begin{figure}
$$
\xymatrix{ 
 & & & *{\bullet} \ar@{-}[d] \ar@{}^{\delta_{3}}[r] & & & \\
 & & & *{\bullet} \ar@{==}[d] \ar@{-}[dr]  \ar@{-}[ldd] \ar@{}^{\delta_{2}}[r]
 & & &  \\
 *{\bullet} \ar@{-}[r] \ar@{}_{\delta^2_1}[d]  & {\cdots} \ar@{-}[r]  & *{\bullet} \ar@{-}[r] \ar@{-}[ur]   \ar@{}_{\delta^2_{q-1}}[d] & *{\bullet} \ar@{-}[dl] \ar@{-}[r] \ar@{}^{\delta_{1}}[d] & *{\bullet} \ar@{-}[r]  \ar@{}^{\delta^3_{r-1}}[d]  & {\cdots} \ar@{-}[r]  &*{\bullet} \ar@{}^{\delta^3_1}[d]   \\
 & &   *{\bullet} \ar@{-}[dl] \ar@{}_{\delta^1_{p-1}}[r]  & & & & \\
 & {\cdots} \ar@{-}[dl] & & & & & \\
*{\bullet}  \ar@{}_{\delta^1_1}[r] & & & & & &
  } 
$$
\caption{The graph $S_{p,q,r}$} \label{FigSpqr}
\end{figure}
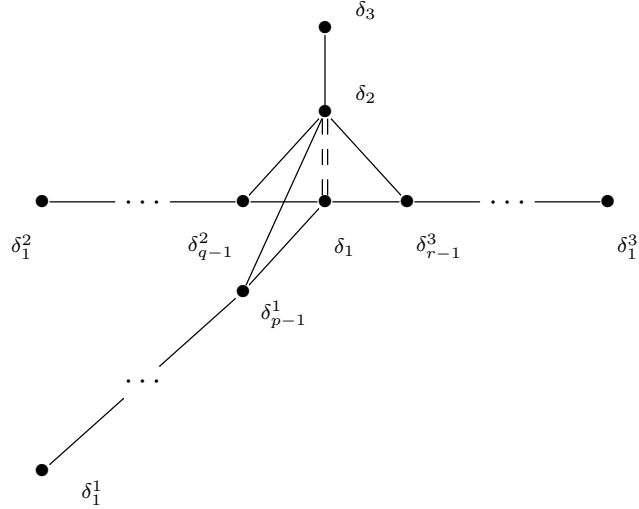
The author showed in his PhD thesis \cite{Eb80} that these diagrams are in fact Coxeter-Dynkin diagrams with respect to distinguished bases. The necessary braid group transformations are indicated in the appendix of the thesis which is not published in \cite{Eb81}. However, they can be found in the paper \cite{Eb96}. Arnold observed a strange duality between the 14 exceptional unimodal singularities \cite{Arn75}. The relation to homological mirror symmetry is explained in the survey article \cite{Eb17}. For a description of the monodromy groups see Sect.~\ref{sect:mono}.

V.~I.~Arnold also classified the bimodal singularities \cite{Arn75, AGV85}. Gabrie\-lov computed Coxeter-Dynkin diagrams with respect to distinguished bases for the singularities of all the series of Arnold, including the bimodal singularities \cite{Ga79}. The author suggested a ``normal form'' for the Coxe\-ter-Dynkin diagrams with respect to distinguished bases for the bimodal singularities \cite{Eb81, Eb83b}. Jointly with D.~Ploog \cite{EP13}, he gave a geometric construction of these diagrams. Moreover, he suggested a ``normal form'' for the Coxeter-Dynkin diagrams with respect to weakly distinguished bases of all the singularities of Arnold's series  and calculated their  intersection forms \cite{Eb81}.

Il'yuta \cite{Il87} formulated two conjectures relating the shape of Coxe\-ter-Dynkin diagrams to the modality of the singularity. He used the definition of the Coxeter-Dynkin diagram of \cite[2.8]{AGV88}: It is a graph with simple edges where the edge between $\delta_i$ and $\delta_j$ has the weight $\langle \delta_i, \delta_j \rangle$. (We assume $n \equiv 2 \ \mbox{mod} \, 4$.) A {\em monotone cycle}\index{monotone cycle}\index{cycle!monotone} in a Coxeter-Dynkin diagram is a sequence of vertices $(\delta_{i_1}, \ldots , \delta_{i_k})$ where $i_1 <  i_2 <  \ldots  < i_k$ and $\delta_{i_j}$ is connected to $\delta_{i_{j+1}}$ for $j=1, \ldots , k$ and $j+1$ taken modulo $k$. The {\em weight}\index{weight} of a monotone cycle is the product $\prod \langle \delta_{i_j}, \delta_{i_{j+1}} \rangle$ where the product is over $j=1, \ldots , k$ and $j+1$ taken modulo $k$. Now Il'yuta conjectured:

\begin{conjecture}[Il'yuta] \label{conj:I1}
The minimum over all $D \in \calD$ of the smallest number of edges that have to be deleted in order that $D$ does not contain monotone cycles is equal to the modality of the singularity.
\end{conjecture}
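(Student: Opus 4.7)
The plan is to prove matching upper and lower bounds for $e(f) := \min_{D \in \calD} e(D)$, where $e(D)$ is the minimum number of edges whose deletion destroys every monotone cycle of $D$. Write $m$ for the modality of $f$.

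For the upper bound $e(f) \leq m$, I would proceed by a case analysis along Arnold's classification, using the explicit normal forms of Coxeter-Dynkin diagrams with respect to distinguished bases computed by Gabrielov~\cite{Ga74b, Ga79} and the author~\cite{Eb81, Eb83b, EP13}. For $m = 0$ (simple singularities), the set $\calD$ contains a tree by characterization~B7 of~\cite{Du79}, hence $e(D) = 0$. For $m = 1$ (unimodal), both $T_{p,q,r}$ of Figure~\ref{FigTpqr} and $S_{p,q,r}$ of Figure~\ref{FigSpqr} contain a single triangle on the central vertices whose disruption by one edge deletion leaves no monotone cycles, since the remaining cycles (if any) must traverse one of the long chains in one direction and cannot return along a monotone path. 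For $m = 2$ (bimodal) the normal forms of~\cite{Eb81, Eb83b, EP13} display precisely two essential triangles and the analogous check applies. For Arnold's series the diagrams stabilize and only the fixed ``head'' contributes cycles.

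For the lower bound $e(f) \geq m$, which is the crux of the conjecture, the plan is to construct a braid-group-invariant function $\Phi: \calD \to \ZZ_{\geq 0}$ with $e(D) \geq \Phi(D)$ for every $D \in \calD$ and $\Phi(D) = m$. A natural candidate is the rank of the sublattice of $M$ generated by the ``cyclic deficiencies'' of $D$: to each monotone cycle one associates a class in $M$ built from its participating vanishing cycles via the Seifert form $L$ (Theorem~\ref{thm:SLH}), and $\Phi(D)$ is the rank of the image. Any edge-removal set destroying all monotone cycles must also annihilate all basis classes in this image, yielding $e(D) \geq \Phi(D)$. Braid-invariance of $\Phi$ would follow from the transformation law of $L$ under $\alpha_j, \beta_{j+1}$, and the equality $\Phi(D) = m$ would then be verified on one convenient normal form where both sides can be read off directly.

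The hard part will be identifying the correct invariant $\Phi$ and proving its braid invariance: the operations $\alpha_j$ and $\beta_{j+1}$ alter the combinatorics of monotone cycles in a complicated way, so the ``essential count'' must be formulated intrinsically on the Milnor lattice and not on the diagram. A secondary obstacle is that bridging from small-modality verifications to Arnold's infinite series requires a uniform argument, which likely forces the use of Gabrielov's theorem~\cite{Ga74a} identifying the modality with the dimension of the $\mu$-constant stratum, in a way that must be matched combinatorially to the monotone-cycle structure of a distinguished basis.
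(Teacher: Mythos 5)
This statement is a \emph{conjecture} in the paper, not a theorem: the survey offers no proof of it, so there is no argument of the author's to compare yours against. More seriously, your plan sets out to prove something that the paper itself records as \emph{false} in the stated generality. Immediately after formulating Conjectures~\ref{conj:I1} and~\ref{conj:I2}, the text notes that Il'yuta verified them only for the unimodal singularities, and that the author in \cite{Eb96} produced counterexamples to \emph{both} conjectures for the bimodal singularities (the conjectures survive for unimodal singularities even under the edge-counting convention of Sect.~\ref{sect:CD}, but not beyond). Consequently no proof of the full statement can exist, and your two-sided bound $e(f)=m$ must break down at $m=2$. The precise point of failure is your upper-bound step for the bimodal case, where you assert that the normal forms of \cite{Eb81, Eb83b, EP13} ``display precisely two essential triangles and the analogous check applies''; the counterexamples of \cite{Eb96} show that the minimum over $D \in \calD$ of the edge-deletion number does not equal $2$ for some bimodal singularity, so this check cannot go through, and a fortiori your proposed braid-invariant lower bound $\Phi(D)=m$ cannot be established either.

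Beyond the fatal issue of falsity, two further remarks on the method. First, your lower-bound invariant $\Phi$ is not actually constructed: associating ``a class in $M$ built from its participating vanishing cycles via the Seifert form'' to a monotone cycle is not a definition, and there is no reason a set of edges meeting every monotone cycle should ``annihilate'' such classes in any sense that bounds its cardinality from below by a rank. Second, the minimum in the conjecture runs over the full (generally infinite) set $\calD$ of Coxeter-Dynkin diagrams of distinguished bases, so verifying $e(D)\le m$ on one or two normal forms gives the upper bound, but any lower bound argument must control \emph{every} diagram in the braid-group orbit; this is exactly why the problem is hard and why only the unimodal case (where Il'yuta's verification and the refinement in \cite{Eb96} apply) is settled affirmatively.
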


\begin{conjecture}[Il'yuta] \label{conj:I2}
The minimum over all $D \in \calD$ of the number of edges of $D$ of negative weight is equal to the modality of the singularity.
\end{conjecture}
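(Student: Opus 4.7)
The plan is to prove the two inequalities $\min_{D\in\calD}\nu(D)\le m$ and $\min_{D\in\calD}\nu(D)\ge m$ separately, writing $\nu(D)$ for the number of edges of $D$ of negative weight and $m$ for the modality of $f$.

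For the upper bound $\min_{D\in\calD}\nu(D)\le m$, my plan is to exhibit, in every modality class, a distinguished basis whose diagram contains exactly $m$ negative edges. For the simple singularities (modality $0$) the standard diagrams of Fig.~\ref{fig:CD2} and Fig.~\ref{fig:E_8class} already carry no negative edges. For the parabolic and hyperbolic families (modality $1$), the normal form $T_{p,q,r}$ of Fig.~\ref{FigTpqr} contains exactly one negative edge, and so does the normal form $S_{p,q,r}$ of Fig.~\ref{FigSpqr} for the $14$ exceptional unimodal families. For the bimodal singularities and for the members of the Arnold series, the normal forms suggested in \cite{Eb81, Eb83b, EP13} likewise contain exactly $m$ negative edges, which can be verified case by case; Theorem~\ref{theo:stab} then transfers the bound to all stabilizations of the singularity.

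The lower bound $\min_{D\in\calD}\nu(D)\ge m$ is the essential content of the conjecture and the main obstacle. The strategy is to identify a basis-independent invariant $\nu^\ast(M)$ of the Milnor lattice satisfying $\nu(D)\ge \nu^\ast(M)$ for every $D\in\calD$ and $\nu^\ast(M)=m$. After stabilizing to $n\equiv 2\pmod 4$, write the intersection matrix as $-2I+A$ with $A=(\langle\delta_i,\delta_j\rangle)_{i\ne j}$. If $\nu(D)=0$, then $A\ge 0$ entry-wise and is irreducible by Corollary~\ref{cor:connected}; Perron--Frobenius together with the classical spectral characterization of Dynkin diagrams (largest eigenvalue strictly less than $2$) then forces $D$ to be of ADE type, so $m=0$. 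The induction step would quantify how much a single negative edge can shift the signature $(\mu_+,\mu_0,\mu_-)$ of the intersection form, and combine this with a formula expressing $m$ in terms of the signature, available class by class from Arnold's classification.

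The main difficulty lies in this signature-counting step, since a sharp bound on how one negative edge can move an eigenvalue across zero is subtle and has to interact well with the braid-group action of Sect.~\ref{sect:braid}, which does not preserve $\nu(D)$. A promising alternative is to reduce to Il'yuta's companion Conjecture~\ref{conj:I1}: one would argue that each negative edge in a minimal diagram is responsible for closing at least one monotone cycle, so that a bound on the number of monotone cycles in terms of $m$ would imply the analogous bound on $\nu(D)$. The $q$-analogue of the classical invariants introduced by Givental \cite{Gi88} and studied by Il'yuta \cite{Il96} provides a natural framework for tracking sign and cycle information simultaneously and therefore seems the most promising tool for carrying out this reduction.
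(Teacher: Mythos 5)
The statement you are trying to prove is labelled as a \emph{conjecture} in the paper, and the paper offers no proof of it; more importantly, the surrounding text explicitly records that the conjecture is \emph{false} in general. Immediately after stating Conjectures~\ref{conj:I1} and~\ref{conj:I2}, the paper says that Il'yuta verified them for the unimodal singularities, that the author verified a strengthened version for the unimodal singularities in \cite{Eb96}, but that he ``gave counterexamples to both conjectures for the bimodal singularities.'' So no correct proof of the statement as given can exist, and any complete argument along your lines would have to contain an error. The error is located exactly where you yourself flag the ``main obstacle'': the lower bound $\min_{D\in\calD}\nu(D)\ge m$ is not just difficult, it is wrong for some bimodal singularities, so no signature-counting invariant $\nu^\ast(M)$ with the properties you postulate can exist. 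Your reduction to Conjecture~\ref{conj:I1} also cannot rescue the argument, since that companion conjecture fails for the bimodal singularities as well.

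Beyond this fatal issue, the proposal is a plan rather than a proof even on its own terms: the induction step bounding how a single negative edge moves the signature is never carried out, the claimed case-by-case verification for the Arnold series and bimodal normal forms is not done (and the braid-group action of Sect.~\ref{sect:braid} does not preserve the number of negative edges, so exhibiting one diagram with $m$ negative edges only gives the upper bound, as you note), and the appeal to Givental's $q$-analogues is a heuristic pointer rather than an argument. The base case you do sketch (no negative edges forces ADE via Perron--Frobenius and connectedness from Corollary~\ref{cor:connected}) is plausible and consistent with the characterizations in \cite{Du79}, but it is the only part of the lower bound that survives; everything from modality $\ge 2$ onward is contradicted by the counterexamples in \cite{Eb96}.
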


Il'yuta showed that these conjectures hold for the unimodal singularities. The author \cite{Eb96} showed that both conjectures are even true for the unimodal singularities if one counts an edge of weight $\langle \delta_i, \delta_j \rangle$ as $|\langle \delta_i, \delta_j \rangle|$ edges as in the definition of the Coxeter-Dynkin diagram in Sect.~\ref{sect:CD}. However, he gave counterexamples to both conjectures for the bimodal singularities. Il'yuta also found other characterizations of Coxeter-Dynkin diagrams of the simple singularities \cite{Il94, Il95, Il97}.

Using the Lyashko-Looijenga mapping, M.~L\"onne \cite{Loe07, Loe10} gave a presentation of the fundamental group of the discriminant complement for Brieskorn-Pham singularities which is related to the intersection matrix with respect to a distinguished basis considered in Sect.~\ref{sect:comp}.

V.~A.~Vassiliev listed some problems about the Lyashko-Looijenga mapping for non-simple singularities in \cite{Va15}.

\section{The monodromy group} \label{sect:mono}
In this section we give a description of the monodromy group in the general case.

Let $M$ be a lattice which is either symmetric and even or skew symmetric.
Let $\eps \in \{+1,-1\}$ and let $\Lambda$ be a subset of $M$. If $M$ is symmetric we demand that $\langle \delta , \delta \rangle = 2\eps$ for all $\delta \in \Lambda$. We define an automorphism $s_\delta \in {\rm Aut}(M)$ by 
\[
s_\delta(v):=v-\eps \langle v, \delta \rangle \delta
\]
for all $v \in M$. Then $s_\delta$ is a reflection in the symmetric case and a symplectic transvection in the skew symmetric case. Let $\Gamma_\Lambda \subset {\rm Aut}(M)$ be the subgroup of ${\rm Aut}(M)$ generated by the transformations $s_\delta$, $\delta \in \Lambda$.

\begin{definition} The pair $(M,\Lambda)$ is called a {\em vanishing lattice}\index{vanishing lattice}\index{lattice!vanishing}, if it satisfies the following conditions:
\begin{itemize}
\item[(i)] $\Lambda$ generates $M$.
\item[(ii)] $\Lambda$ is an orbit of $\Gamma_\Lambda$ in $M$.
\item[(iii)] If ${\rm rank} \, M >1$, then there exist $\delta, \delta' \in \Lambda$ such that $\langle \delta, \delta' \rangle =1$.
\end{itemize}
\end{definition}

It follows from Corollary~\ref{cor:vanishing} that, if it is not true that both $n$ is odd and 0 is a non-degenerate critical point of $f$, then the pair $(\widetilde{H}_n(X_s), \Lambda^\ast)$ is a vanishing lattice with $\eps=(-1)^{n(n-1)/2}$ and $\Gamma$ is the corresponding monodromy group.

We introduce some more algebraic notions. Let 
 $M^\#=\on{Hom}(M,\ZZ)$ be the dual module and $j: M \to M^\#$ be 
 the canonical homomorphism.
 A homomorphism $h:M\to M$ induces a homomorphism $h^t:M^\#\to M^\#$ of the dual modules.
 If $h$ leaves the bilinear form $\langle \, ,\, \rangle$ invariant, then $h^t(j(M))\subset j(M)$.
 An automorphism $h\in\on{Aut}(M)$ thus induces a homomorphism $h^t:M^\#/j(M)\to M^\#/j(M)$.
 Let $\on{Aut}^\#(M)\subset\on{Aut}(M)$ be the subgroup of those automorphisms $h\in\on{Aut}(M)$ with
 $h^t=\on{id}_{M^\#/j(M)}$.
 
Now let $M$ be a skew symmetric  lattice.
 It has a basis
 \[
 (e_1,f_1,\ldots,e_m,f_m,g_1,\ldots,g_k)
 \]
 such that 
 \[
 \langle e_i, f_i \rangle = - \langle f_i, e_i \rangle = d_i \ \mbox{for } d_i \in \ZZ,\  d_i \geq 1,\  i=1, \ldots m,
 \] 
 all other inner products are equal to zero, 
 and $d_{i+1}$ is divisible by $d_i$ for $i=1,\ldots,m-1$.
 Such a basis is called a {\em symplectic}\index{symplectic basis}\index{basis!symplectic} basis.

 Let $(e_1,f_1,\ldots,e_m,f_m,g_1,\ldots,g_k)$ be a symplectic basis of $M$.
 Let $\eta_2$ be the exponent of $2$ in the prime factor decomposition of $d_m$.
 Let $\mu=2m+k$.
 We identify $M$ with $\ZZ^\mu$ through the symplectic basis $(e_1,f_1,\ldots,e_m,f_m,g_1,\ldots,g_k)$.
 A subgroup $G\subset\on{Aut}^\#(M)$ then corresponds to a subgroup
 $\rho(G)\subset\on{Sp}^\#(\mu,\ZZ)$, where $\on{Sp}^\#(\mu,\ZZ)$ is the corresponding subgroup of the symplectic group
 \[
 \on{Sp}(\mu,\ZZ)= \{ A \in \on{GL}(\mu, \ZZ) \, | \, A^t J A = J \}.
 \]
 Let $r\in\NN\setminus\{0\}$.
 A subgroup $G\subset\on{Aut}^\#(M)$ is called a {\em congruence subgroup modulo}\index{congruence subgroup} $r$
 if
  $$
  \rho(G)=\{A\in\on{Sp}^\#(\mu,\ZZ)\ |\ A\equiv E\on{mod} r\}.
  $$
 Here $E$ is the unit matrix and $A\equiv E\on{mod}r$ means that
 $a_{ij}\equiv\delta_{ij}\on{mod}r$ for all $1\leq i,j\leq\mu$, where $A=(a_{ij})$.

 A congruence subgroup is obviously of finite index in the group
 $\on{Aut}^\#(M)=\on{Sp}^\#(M)$.
 
The following theorem was proved by W.~A.~M.~Janssen \cite{Jan83}  based on
 previous work of A'Campo \cite{A'C79}, B.~Wajnryb
 \cite{Wa80}, and S.~V.~Chmutov \cite{Chm82, Chm83}. 
The notation $\langle v,M\rangle=\ZZ$ means that there is a $y\in M$ with $\langle v,y\rangle=1$. 
We write $a\in\Lambda\on{mod}2$ if there is an element $b\in M$ with $a-2b\in\Lambda$.

 \begin{theorem}[Janssen] \label{thm:Janssen}
 Let $(M, \Lambda)$ be a skew symmetric vanishing lattice. Then
\begin{itemize}
\item[{\rm (i)}]  $\Gamma_\Lambda$ contains the congruence subgroup modulo
 $2^{\eta_2+1}$ of $\on{Sp}^\#(M)$,
\item[{\rm (ii)}] $\Lambda=\{v\in M\ |\ \langle v,M\rangle=\ZZ\text{ and }
 v\in\Lambda\on{mod}2\}$.
 \end{itemize}
 \end{theorem}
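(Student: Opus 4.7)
The strategy is to work in a symplectic basis $(e_1,f_1,\ldots,e_m,f_m,g_1,\ldots,g_k)$ of $M$ adapted to the invariants $d_1\mid\cdots\mid d_m$, and to exploit axiom (iii) to locate a hyperbolic pair $\delta,\delta'\in\Lambda$ with $\langle\delta,\delta'\rangle=1$. The easy direction of (ii), namely $\Lambda\subseteq\{v\in M\mid\langle v,M\rangle=\ZZ\text{ and }v\in\Lambda\on{mod}2\}$, is then immediate: transitivity of $\Gamma_\Lambda$ on $\Lambda$ (axiom (ii)) combined with the existence of $(\delta,\delta')$ shows that every element of $\Lambda$ is primitive, while the mod $2$ inclusion is tautological.

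\textbf{Part (i).} From the hyperbolic pair I would iterate $s_\delta(\delta')=\delta'-\delta$ to place every primitive element of the plane $\ZZ\delta+\ZZ\delta'$ with the correct mod $2$ residue into $\Lambda$. Transporting this plane by elements of $\Gamma_\Lambda$ and using axiom (i) (that $\Lambda$ generates $M$), I would fill out analogous planes at each hyperbolic summand of the symplectic basis and link them through the conjugation identity $s_\alpha s_\beta s_\alpha^{-1}=s_{s_\alpha(\beta)}$, which also guarantees closure of $\Lambda$ under $\Gamma_\Lambda$. The final step is comparison with a standard generating set for the congruence subgroup $\{A\in\on{Sp}^\#(M)\mid A\equiv E\on{mod}2^{\eta_2+1}\}$: one expresses each such generator as an explicit word in transvections $s_\alpha$, $\alpha\in\Lambda$, following the model computations of A'Campo, Wajnryb, and Chmutov.

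\textbf{Part (ii), hard direction, and main obstacle.} Given $v\in M$ primitive with $v\in\Lambda\on{mod}2$, I would invoke (i) to find an element $g$ of the mod-$2^{\eta_2+1}$ congruence subgroup, hence of $\Gamma_\Lambda$, mapping $\delta$ to $v$; axiom (ii) then yields $v\in\Lambda$. The technical heart of the argument, and the hardest step, is calibrating the exponent $\eta_2+1$ precisely: a transvection $s_\alpha$ need not act trivially on $M^\#/j(M)$ unless $\alpha$ satisfies suitable $2$-adic divisibility conditions inherited from the elementary divisors $d_i$, and matching the abstract algebraic level of the congruence subgroup with the geometrically produced list of transvections requires delicate $2$-adic bookkeeping. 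This is precisely where Janssen's contribution refines and unifies the earlier partial results.
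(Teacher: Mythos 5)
First, a point of reference: the paper itself gives no proof of this theorem; it is quoted from Janssen's article (built on work of A'Campo, Wajnryb, and Chmutov), so your proposal can only be judged on its own merits. Your easy inclusion in (ii) is fine: a hyperbolic pair $\delta,\delta'\in\Lambda$ with $\langle\delta,\delta'\rangle=1$ makes $\delta$ primitive, and transitivity of $\Gamma_\Lambda$ on $\Lambda$ propagates primitivity to every element of $\Lambda$, while the mod $2$ condition is tautological. Your plan for (i) has the right general shape, but it defers the entire substance of the theorem -- exhibiting each generator of the congruence subgroup modulo $2^{\eta_2+1}$ of $\on{Sp}^\#(M)$ as a word in the transvections $s_\alpha$, $\alpha\in\Lambda$, including the part of $\on{Sp}^\#(M)$ that involves the radical spanned by $g_1,\ldots,g_k$ and the $2$-adic constraints coming from the elementary divisors $d_i$ -- to unspecified ``model computations,'' so it cannot be assessed as a proof; this is where all the work lies.

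The genuine gap is in your derivation of the hard half of (ii) from (i). An element $g$ of the congruence subgroup modulo $2^{\eta_2+1}$ satisfies $g(w)\equiv w$ modulo $2^{\eta_2+1}M$ for every $w\in M$, so it can only carry $\delta$ to vectors lying in the residue class of $\delta$ modulo $2^{\eta_2+1}$. But the vector $v$ you start from is only assumed primitive and congruent to \emph{some} element of $\Lambda$ modulo $2$; it need not be congruent to $\delta$, nor to any fixed $\lambda\in\Lambda$, modulo $2^{\eta_2+1}$ (and when $\eta_2\geq 1$ one class modulo $2$ splits into many classes modulo $2^{\eta_2+1}$). Hence no element of the congruence subgroup can in general map $\delta$, or a prescribed $\lambda$, to $v$, and the step ``invoke (i) to find $g$ mapping $\delta$ to $v$'' fails. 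What is actually needed is that $\Gamma_\Lambda$ itself -- not merely the congruence subgroup it contains -- acts transitively on the set of primitive vectors lying in a fixed class of $\Lambda$ modulo $2$; this transitivity statement is the key lemma, is proved by direct manipulation of transvections, and is an input to (i) rather than a consequence of it. As it stands, your argument for (ii) establishes only that $\Lambda$ contains all primitive vectors congruent to some $\lambda\in\Lambda$ modulo $2^{\eta_2+1}$, which is strictly weaker than the asserted equality.
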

 
As a corollary, we get the following result:
\begin{corollary}
Let $f:(\CC^{n+1},0)\to(\CC,0)$ be a holomorphic function germ with an isolated singularity at $0$ and let $n$ be odd. Then
\begin{itemize}
\item[{\rm (i)}]  $\Gamma$ contains the congruence subgroup modulo
 $2^{\eta_2+1}$ of $\on{Sp}^\#(M)$,
\item[{\rm (ii)}] $\Lambda^*=\{v\in M\ |\ \langle v,M\rangle=\ZZ\text{ and }
 v\in\Lambda^*\on{mod}2\}$.
 \end{itemize}
In $\on{(ii)}$ it is assumed that $0$ is not a non-degenerate critical point of $f$. 
 \end{corollary}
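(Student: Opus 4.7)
The plan is to apply Janssen's Theorem~\ref{thm:Janssen} directly to the pair $(M,\Lambda^*)$, where $M$ is the Milnor lattice and $\Lambda^*$ is the set of vanishing cycles of $f$. The bulk of the work is to recognize $(M,\Lambda^*)$ as a skew-symmetric vanishing lattice whose associated transvection group $\Gamma_{\Lambda^*}$ coincides with the monodromy group $\Gamma$; the corollary is then a direct translation of Janssen's conclusions.

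Since $n$ is odd, the intersection form on $M$ is skew-symmetric, so $M$ is a skew-symmetric lattice in the required sense. The Picard-Lefschetz formula (Theorem~\ref{thm:P-L}) gives
$$h_\delta(\alpha)=\alpha-(-1)^{n(n-1)/2}\langle\alpha,\delta\rangle\delta$$
for $\delta\in\Lambda^*$, which is exactly the symplectic transvection $s_\delta$ with $\eps=(-1)^{n(n-1)/2}$. Fixing a weakly distinguished basis $(\delta_1,\ldots,\delta_\mu)$, $\Gamma$ is generated by $h_{\delta_1},\ldots,h_{\delta_\mu}$, all of which are of the form $s_\delta$ with $\delta\in\Lambda^*$, so $\Gamma\subseteq\Gamma_{\Lambda^*}$. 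Conversely, by Corollary~\ref{cor:vanishing} every $\delta\in\Lambda^*$ is of the form $g(\delta_i)$ for some $g\in\Gamma$, hence $s_\delta=g\,s_{\delta_i}g^{-1}\in\Gamma$, giving the reverse inclusion.

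I would then verify the three vanishing-lattice axioms in turn. Axiom (i), that $\Lambda^*$ generates $M$, is immediate from Brieskorn's Theorem~\ref{thm:distbasis}, since any distinguished basis of vanishing cycles is already a $\ZZ$-basis of $M$. Axiom (ii), that $\Lambda^*$ is a single $\Gamma_{\Lambda^*}$-orbit, is Corollary~\ref{cor:vanishing}, whose hypothesis (not both $n$ odd and $0$ non-degenerate) is precisely what we are assuming here. For axiom (iii), the existence of $\delta,\delta'\in\Lambda^*$ with $\langle\delta,\delta'\rangle=1$, I would invoke the Tyurina--Siersma deformation theorem cited just before Corollary~\ref{cor:vanishing}: under our non-degeneracy assumption, $f$ deforms to the $A_2$-stabilization $g(z)=z_0^3+z_1^2+\cdots+z_n^2$, and the induced inclusion $M_g\hookrightarrow M$ embeds a distinguished basis of $g$ into $M$. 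The $A_2$-computation of Example~\ref{ex:A_k} combined with the sign formula of Theorem~\ref{theo:stab} yields two such basis cycles with intersection number $\pm 1$; replacing one by its negative (still a vanishing cycle, as vanishing cycles are defined only up to orientation) produces intersection $+1$, as required.

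With the axioms checked, parts (i) and (ii) of the corollary follow by direct substitution from parts (i) and (ii) of Theorem~\ref{thm:Janssen}. The only delicate step is the sign bookkeeping in axiom (iii): one has to track the interaction between the $A_2$ sign in Example~\ref{ex:A_k} and the stabilization sign $[\on{sign}(j-i)]^m(-1)^{(n+1)m+m(m-1)/2}$ of Theorem~\ref{theo:stab}, but since orientations of vanishing cycles can be flipped freely, this reduces to checking that $\pm 1$ occurs. Finally, if the non-degeneracy assumption fails then $\mu=1$, $M\cong\ZZ$ with the zero form, and the congruence subgroup in question is trivial, so part (i) of the corollary holds vacuously even in this excluded case.
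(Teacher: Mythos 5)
Your proposal is correct and follows the same route as the paper: the paper deduces this corollary directly from Janssen's Theorem~\ref{thm:Janssen}, having already observed (via Corollary~\ref{cor:vanishing}, Brieskorn's Theorem~\ref{thm:distbasis}, and the Tyurina--Siersma argument) that $(M,\Lambda^*)$ is a skew-symmetric vanishing lattice with $\Gamma_{\Lambda^*}=\Gamma$. Your explicit verification of the three axioms and of the degenerate case $\mu=1$ simply fills in details the paper leaves implicit.
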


Janssen also proved a version of Theorem~\ref{thm:Janssen} for skew symmetric vanishing lattices over the field $\FF_2$ and classified skew symmetric vanishing lattices over $\FF_2$ \cite{Jan83} and over $\ZZ$ \cite{Jan85}. B.~Shapiro, M.~Shapiro,  and A.~Vainshtein \cite{SSV98} applied these results to certain enumeration problems.

 Now let $M$ be symmetric and let $\eps\in\{-1,+1\}$. We put
 \[
  \overline{M} :=M/\on{ker}j, \quad 
  \overline{M}_\RR:=\overline{M} \otimes\RR.
 \]
 Then $\overline{M}_\RR$ is a finite-dimensional real vector space with a nondegenerate symmetric bilinear form.
 Let $h\in\on{Aut}(M)$ and $\bar h$ the induced element in $O(\overline{M}_\RR)$.
 The transformation $\bar h$ can be written as a product of
 reflections
  $$
  \bar h=s_{v_1}\circ\ldots\circ s_{v_r}
  $$
 with $v_i\in \overline{M}_\RR$, $\langle v_i,v_i\rangle\neq0$, $i=1,\ldots,r$. We define
  $$
  \nu_\eps(h):=\left\{\begin{array}{cl} +1 & \text{ if }\eps\langle
  v_i,v_i\rangle<0\text{ for an even number of indices,}\\
  -1 & \text{ otherwise.}\end{array}\right.
  $$
 The homomorphism $\nu_\eps:\on{Aut}(M)\to \{-1,+1\}$ is called the {\em real $\eps$-spinor norm}.\index{spinor norm}

 \begin{definition}
 We define a subgroup $O_\eps^*(M)\subset O(M)$\index{$O_\eps^*(M)$} as follows:
  $$
  O_\eps^*(M):=\on{Aut}^\#(M)\cap\on{ker}\nu_\eps.
  $$
  \end{definition}

 If $M$ is non-degenerate, then $M^\#/j(M)$ is a finite group and hence
 $O^\#(M)=\on{Aut}^\#(M)$ is a subgroup of finite index in $O(M)=\on{Aut}(M)$.
 The subgroup $\on{ker}\nu_\eps\subset O(M)$ is of index $\leq2$ in $O(M)$.
 Thus if $M$ is non-degenerate, then $O_\eps^*(M)$ is a subgroup of finite index in $O(M)$.

Using a result of M.~Kneser \cite{Kn81}, the author proved the following theorem \cite{Eb84}. A unimodular hyperbolic plane is a two-dimensional lattice with the bilinear form given by
\[
\begin{pmatrix} 0 & 1\\ 1 & 0
\end{pmatrix} .
\]
\begin{theorem} \label{thm:symm-van}
Let $(M, \Lambda)$ be an even symmetric vanishing lattice. Assume that $M$ contains a six-dimensional sublattice $K \subset M$ which is the orthogonal direct sum of two unimodular hyperbolic planes and a lattice of type $\eps A_2$. Assume moreover $\{ v \in K \, | \, \langle v, v \rangle=2 \eps \} \subset \Lambda$. Then
\begin{itemize}
\item[{\rm (i)}] $\Gamma_\Lambda=O_\eps^*(M) ,$
\item[{\rm (ii)}] $\Lambda=\{v\in M\ |\ \langle v,v\rangle=2\eps\text{ and } \langle
 v,M\rangle=\ZZ\} .$
\end{itemize}
\end{theorem}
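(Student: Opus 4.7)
The plan is to establish (i) by a two-sided inclusion and to derive (ii) as a by-product of the harder direction.

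\textbf{The easy inclusion for (i).} For any $\delta \in \Lambda$ and any $l \in M^\#$ a direct computation gives $s_\delta^t(l)(v) = l(v) - \epsilon\langle v,\delta\rangle l(\delta)$, so $s_\delta^t(l) = l - \epsilon\, l(\delta)\, j(\delta)$, which means $s_\delta$ acts trivially on $M^\#/j(M)$. Hence $s_\delta \in \mathrm{Aut}^\#(M)$. Moreover, since $\langle\delta,\delta\rangle=2\epsilon$, we have $\epsilon\langle\delta,\delta\rangle = 2 > 0$, so the one-term product $\bar s_\delta = s_\delta$ gives $\nu_\epsilon(s_\delta)=+1$. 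Thus $s_\delta \in O^*_\epsilon(M)$, and since such reflections generate $\Gamma_\Lambda$, we obtain $\Gamma_\Lambda \subseteq O^*_\epsilon(M)$.

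\textbf{The easy inclusion for (ii).} Every $v\in\Lambda$ satisfies $\langle v,v\rangle=2\epsilon$ by definition of a vanishing lattice. Axiom (iii) yields $\delta_0,\delta_0'\in\Lambda$ with $\langle\delta_0,\delta_0'\rangle=1$, and by axiom (ii) there is $g\in\Gamma_\Lambda$ with $g(\delta_0)=v$; then $\langle v,g(\delta_0')\rangle=1$, giving $\langle v,M\rangle=\mathbb Z$. Hence $\Lambda\subseteq R$, where
\[
R := \{v\in M \mid \langle v,v\rangle = 2\epsilon,\ \langle v,M\rangle = \mathbb Z\}.
\]

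\textbf{The reverse inclusions, which are coupled.} The strategy is to prove $R\subseteq\Lambda$ first; this simultaneously completes (ii) and provides the reservoir of reflections needed to invoke Kneser's theorem for (i). Given $v\in R$, I would build an element $g$ of $\Gamma_\Lambda$ such that $g(v)$ lies in the six-dimensional sublattice $K$. By hypothesis every primitive norm-$2\epsilon$ vector of $K$ belongs to $\Lambda$, and since $\Lambda$ is $\Gamma_\Lambda$-stable by axiom (ii), this forces $v\in\Lambda$. The construction uses the two unimodular hyperbolic planes inside $K$ as maneuvering space: each hyperbolic plane contains an abundant supply of primitive norm-$2\epsilon$ vectors (all of them in $\Lambda$), and iterated reflections in these play the role of a Witt-type cancellation performed entirely inside $\Gamma_\Lambda$. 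The $\epsilon A_2$ summand supplies the extra rigidity needed to make the parity/spinor-norm bookkeeping work out.

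\textbf{Invoking Kneser.} Once $R\subseteq\Lambda$ is established, every reflection $s_v$ with $v\in R$ lies in $\Gamma_\Lambda$. The theorem of Kneser \cite{Kn81}, whose lattice-theoretic hypotheses are tailored to precisely the data assumed here (two unimodular hyperbolic summands together with an $\epsilon A_2$), states that $O^*_\epsilon(M)$ is generated by the reflections $s_v$, $v\in R$. Combined with the easy inclusion, this yields $\Gamma_\Lambda = O^*_\epsilon(M)$, proving (i); and the chain $R\subseteq\Lambda\subseteq R$ established along the way gives (ii).

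\textbf{Main obstacle.} The technical heart of the argument is the Witt-style sweep producing $g\in\Gamma_\Lambda$ with $g(v)\in K$. The classical Witt extension theorem provides such a $g$ inside the full orthogonal group $O(M)$, but here one must manufacture $g$ from reflections in elements of $\Lambda$ only — never leaving $\Gamma_\Lambda$ — while keeping the spinor norm and discriminant action under control at each step. This is where the full strength of the assumption on $K$ is used, and where Kneser's structural results provide the decisive input.
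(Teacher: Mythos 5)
The survey you are quoting gives no proof of this theorem: it states the result and defers entirely to \cite{Eb84} for the argument and to Kneser \cite{Kn81} for the decisive arithmetic input, so the comparison has to be made with the proof in \cite{Eb84}. Your outline does reproduce the architecture of that proof, and the two easy inclusions are correct and complete as you state them: the computation $s_\delta^t(l)=l-\eps\, l(\delta)\, j(\delta)\equiv l \bmod j(M)$, the spinor-norm value $\nu_\eps(s_\delta)=+1$ coming from $\eps\langle\delta,\delta\rangle=2>0$, and the use of axioms (ii) and (iii) of a vanishing lattice to get $\langle v,M\rangle=\ZZ$ for $v\in\Lambda$ are all exactly the intended steps.

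However, everything that makes the theorem nontrivial is contained in the step you defer, and as written it is a plan rather than a proof. Worse, it hides a circularity: the ``Witt-type sweep'' producing $g\in\Gamma_\Lambda$ with $g(v)\in K$ must be built from reflections $s_b$ with $b\in\Lambda$, but which vectors of norm $2\eps$ lie in $\Lambda$ is precisely what statement (ii) asserts, so you cannot freely reflect in arbitrary norm-$2\eps$ vectors of $M$. The resolution in \cite{Eb84} is to work with Eichler--Siegel transvections $E_{e,a}$ attached to isotropic vectors $e$ of the two unimodular hyperbolic planes, to verify by an explicit identity that each such transvection factors as a product of two reflections in norm-$2\eps$ vectors which are already known to lie in $\Lambda$ (this is where the hypothesis $\{v\in K\mid\langle v,v\rangle=2\eps\}\subset\Lambda$ and the orbit axiom are actually consumed), and then to invoke Eichler--Wall transitivity (two hyperbolic summands suffice) to move $v$ into $K$. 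None of this bookkeeping appears in your sketch. A further sign that the details have not been worked through: a single unimodular hyperbolic plane contains only the norm-$2\eps$ vectors $\pm(1,\eps)$, not an ``abundant supply''; the abundance arises only in $U\perp U$. Finally, you quote Kneser's theorem in precisely the form you need (``$O_\eps^*(M)$ is generated by the $s_v$, $v\in R$'') without justification; matching the hypotheses of \cite{Kn81} to the sublattice $U\perp U\perp\eps A_2$ is itself a nontrivial part of \cite{Eb84}. So the proposal is a plausible high-level reconstruction of the intended proof, but its two decisive inputs --- the reflection factorization of Eichler transvections inside $\Gamma_\Lambda$ and the precise generation statement extracted from Kneser --- are asserted rather than proved, and this constitutes a genuine gap.
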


From Theorem~\ref{thm:symm-van} one can derive the following theorem \cite{Eb84}. The statement for the exceptional unimodal singularities was already proven by  H.~Pinkham \cite{Pi77} (see also \cite{Eb83a} for the history of the problem and previous results).
It was noticed by Looijenga that (ii) is a consequence of (i).

 \begin{theorem}\label{thm:arithmetic}
 Let $f:(\CC^{n+1},0)\to(\CC,0)$ be a holomorphic function germ with an isolated singularity at $0$ and let $n$ be even,
 $\eps=(-1)^{n(n-1)/2}$.
 Suppose that $f$ is not of type $T_{p,q,r}$ with $\frac{1}{p}+\frac{1}{q}+\frac{1}{r} < 1$ and $(p,q,r)\neq
 (2,3,7),(2,4,5),(3,3,4)$.
Then
\begin{itemize}
\item[{\rm (i)}] $\Gamma=O_\eps^*(M) ,$
\item[{\rm (ii)}] $\Lambda^*=\{v\in M\ |\ \langle v,v\rangle=2\eps\text{ and } \langle
 v,M\rangle=\ZZ\} .$
\end{itemize}
In $\on{(ii)}$ it is assumed that $0$ is not a non-degenerate critical point of $f$. 
 \end{theorem}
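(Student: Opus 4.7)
The plan is to deduce the theorem from Theorem~\ref{thm:symm-van} by producing, inside the Milnor lattice $M$ of $f$, the six-dimensional sublattice required there. First I would verify that the pair $(M,\Lambda^*)$ is indeed an even symmetric vanishing lattice in the sense of Section~\ref{sect:mono}: condition (i) that $\Lambda^*$ generates $M$ follows from Brieskorn's Theorem~\ref{thm:distbasis}, since any distinguished basis is contained in $\Lambda^*$; condition (ii), that $\Lambda^*$ is a single $\Gamma$-orbit, is exactly Corollary~\ref{cor:vanishing} (which is where the hypothesis that $0$ is not a non-degenerate critical point is needed for statement (ii) of the theorem); and condition (iii), the existence of $\delta,\delta'\in\Lambda^*$ with $\langle\delta,\delta'\rangle=1$, is the consequence of Tyurina--Siersma noted before Corollary~\ref{cor:vanishing}.

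The second step is to exhibit a sublattice $K\subset M$ which is the orthogonal sum of two unimodular hyperbolic planes and an $\eps A_2$, with $\{v\in K\mid\langle v,v\rangle=2\eps\}\subset\Lambda^*$. For this I would argue via deformation: by Tyurina--Siersma, if $f$ deforms to some simpler singularity $g$, then $M_g$ embeds in $M_f$ and a distinguished basis of $g$ extends to one of $f$, so any root sublattice of $M_g$ generated by vanishing cycles also sits inside $M_f$ and is generated by elements of $\Lambda^*$. It therefore suffices to find, in the adjacency diagram of Arnold's classification, a small singularity $g$ whose Milnor lattice $M_g$ manifestly contains $U\oplus U\oplus \eps A_2$ as a span of vanishing cycles, such that every singularity $f$ not in the excluded list is adjacent to some such $g$. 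For most cases this can be done by inspection of the Coxeter--Dynkin diagrams produced by Gabrielov for the unimodal and bimodal families (Sect.~\ref{sect:special}): once two hyperbolic planes are exhibited as a sublattice generated by four vanishing cycles in some convenient model, an $A_2$ of vanishing cycles is obtained from the adjacent $A_2$ singularity. The excluded hyperbolic $T_{p,q,r}$'s with $(p,q,r)\neq(2,3,7),(2,4,5),(3,3,4)$ are precisely those for which no such $g$ exists, essentially because their Milnor lattice has signature $(1,\mu-2)$ and they do not deform to a singularity of positive index $\geq 2$.

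Once this sublattice is in place, Theorem~\ref{thm:symm-van} applies and directly gives assertion (i): $\Gamma=\Gamma_{\Lambda^*}=O_\eps^*(M)$. Assertion (ii), the intrinsic description of $\Lambda^*$, then follows from the corresponding part of Theorem~\ref{thm:symm-van}, once we know $(M,\Lambda^*)$ is a vanishing lattice containing $K$. Alternatively, as Looijenga observed, (ii) is a formal consequence of (i): the set on the right-hand side of (ii) is clearly $\Gamma$-invariant, contains $\Lambda^*$, and the arithmetic group $O_\eps^*(M)$ acts transitively on it by standard lattice theory (Eichler--Kneser type results), so $\Gamma$-transitivity of the two sets forces equality.

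The main obstacle is the second step, the existence of the $U\oplus U\oplus \eps A_2$ sublattice for every singularity in the theorem. It requires going through Arnold's classification list in a fairly uniform way and exploiting the adjacency (or ``series'') structure to realize the required six vanishing cycles, and verifying that precisely the excluded $T_{p,q,r}$'s fail this. The simple singularities (where $M$ is definite, so no hyperbolic plane can be embedded) and the three exceptional $T_{p,q,r}$'s $(2,3,7),(2,4,5),(3,3,4)$ appear as the delicate cases: for the simple ones the identification $\Gamma=W(M)=O_\eps^*(M)$ is classical root-system lore and has to be invoked separately, while for the three exceptional $T$'s one exploits that they are adjacent to the 14 exceptional unimodal singularities handled by Pinkham, whose Milnor lattices do contain the needed sublattice, so that the embedding $M_{T_{p,q,r}}\hookrightarrow M_{\text{exceptional}}$ together with Pinkham's result pulls the transitivity of $\Gamma$ on $\Lambda^*$ back to $T_{p,q,r}$.
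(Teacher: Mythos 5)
Your overall strategy --- check that $(M,\Lambda^*)$ is an even symmetric vanishing lattice, produce a sublattice $U\oplus U\oplus\eps A_2$ spanned by vanishing cycles via Tyurina--Siersma and adjacency, apply Theorem~\ref{thm:symm-van}, and invoke Looijenga's observation that (ii) is a formal consequence of (i) --- is exactly the route of \cite{Eb84}, which is what the paper cites for this theorem. The first and last steps are fine. The gaps lie in your treatment of the cases where the required six-dimensional sublattice does not exist.

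First, you list only the simple singularities and the three minimal hyperbolic $T_{p,q,r}$ as delicate, but the simple elliptic singularities $\widetilde{E}_6,\widetilde{E}_7,\widetilde{E}_8$ are equally out of reach of Theorem~\ref{thm:symm-van}: for $n\equiv 2\ \mbox{mod}\ 4$ their intersection form is negative semi-definite, so $M$ has no positive directions and cannot contain even a single unimodular hyperbolic plane. This case must be settled directly from the explicit description of $\Gamma$ as the semi-direct product of $\ker j\otimes j(M)$ with $W(E_k)$ given in Sect.~\ref{sect:special}, as Remark~\ref{rem:hyperbolic} indicates; your proposal is silent on it.

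Second, and more seriously, your argument for the minimal hyperbolic singularities $(p,q,r)=(2,3,7),(2,4,5),(3,3,4)$ does not work. You correctly note that their Milnor lattices (signature with exactly one positive eigenvalue) cannot contain $U\oplus U$, but the proposed remedy --- using the embedding $M_{T_{p,q,r}}\hookrightarrow M_{\mathrm{exc}}$ into the Milnor lattice of an adjacent exceptional unimodal singularity to ``pull back'' the conclusion --- is not a valid inference. Transitivity of the larger monodromy group on the larger set of vanishing cycles says nothing about the subgroup of $O(M_{T_{p,q,r}})$ generated by the reflections in the vanishing cycles of $T_{p,q,r}$ itself, nor about whether $\Lambda^*_{T_{p,q,r}}$ exhausts the set $\{v\mid\langle v,v\rangle=2\eps,\ \langle v,M\rangle=\ZZ\}$ in $M_{T_{p,q,r}}$; and transitivity of $\Gamma$ on $\Lambda^*$ is already known for every singularity by Corollary~\ref{cor:vanishing}, so it is not what is at stake in (i) or (ii). The actual proof for these three singularities is Brieskorn's (\cite{Br81}, with a proof in \cite{Eb87}) and exploits the fact that the diagram of Fig.~\ref{FigTpqr} with the vertex $\delta_2$ removed is a Coxeter system of hyperbolic type exactly for these three triples --- a mechanism entirely different from the one you describe.
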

 
 \begin{remark} \label{rem:hyperbolic}
Theorem~\ref{thm:arithmetic} follows for the simple and simple elliptic singularities by the results stated in Sect.~\ref{sect:special}. It is false for the singularities of type $T_{p,q,r}$ with $\frac{1}{p}+\frac{1}{q}+\frac{1}{r} < 1$ and $(p,q,r)\neq 
  (2,3,7),(2,4,5),(3,3,4)$, see \cite[\S 3]{Eb81}. This follows from the fact that the graph of Fig.~\ref{FigTpqr} with the vertex $\delta_2$ removed and with $\frac{1}{p}+\frac{1}{q}+\frac{1}{r} < 1$ defines a Coxeter system of hyperbolic type if and only if $(p,q,r)= (2,3,7), (2,4,5), (3,3,4)$ \cite[Ch.~V, \S~4, Exercice 12]{Bou02}. The three singularities $T_{p,q,r}$ with these values of $(p,q,r)$ are the {\em minimal hyperbolic singularities}\index{minimal hyperbolic singularities}\index{hyperbolic singularities!minimal}\index{singularities!minimal hyperbolic}. Theorem~\ref{thm:arithmetic} was proved for these singularities by Brieskorn (\cite[Theorem~2]{Br81}, but no proof is given). A proof following Brieskorn's proof can be found in \cite[5.5]{Eb87}. A'Campo (unpublished) and Looijenga showed that the monodromy groups of these singularities have exponential growth. Looijenga's proof is published in \cite[Appendix~II]{Du79}.
 \end{remark}
 

\section{Topological equivalence} \label{sect:top}
We shall now consider the question to which extent the invariants determine the topological type of the singularity.

The topological type of a singularity $f: (\CC^{n+1},0) \to (\CC,0)$ is described by the (local) embedding of the variety $f^{-1}(0)$ in a neighborhood of the singular point $0 \in \CC^{n+1}$.

\begin{definition} Two singularities $f,g: (\CC^{n+1},0) \to (\CC,0)$ are {\em topologically equivalent}\index{topologically equivalent}\index{equivalent!topologically} if there is a homeomorphism of neighborhoods $U$ and $V$ of the origin which maps  $f^{-1}(0) \cap U$ to $g^{-1} \cap V$.
\end{definition} 

By \cite[Theorem~2.10]{Mi68}, the variety $f^{-1}(0)$ is locally the cone over its link $K$. By \cite{Mi68}, the link is a fibered knot.
A.~Durfee \cite{Du74} proved the following theorem.

\begin{theorem}[Durfee] \label{thm:Durfee}
Let $n \geq 3$. There is a one-to-one correspondence of isotopy classes of fibered knots in $S^{2n+1}$ and equivalence classes of integral unimodular bilinear forms given by associating to each fibered knot its Seifert form.
\end{theorem}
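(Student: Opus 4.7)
\medskip

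The plan is to establish the bijection by showing that for $n \geq 3$ a fibered knot $K \subset S^{2n+1}$ is completely encoded by its fiber $F$ together with its monodromy $h$, that this pair $(F,h)$ is in turn encoded by the Seifert form, and that conversely every unimodular form arises this way via an open book construction. Throughout one works with simple fibered knots, meaning that the fiber $F$ (a $2n$-dimensional Seifert surface with $\partial F = K$) is $(n-1)$-connected; this normal form can be arranged by Morse-theoretic surgery on the fiber when $n \geq 3$.

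First I would verify that the Seifert form $L$ on $H_n(F;\ZZ)$ is unimodular. The positive pushoff $i_+ \colon F \to S^{2n+1}\setminus F$ is a homotopy equivalence because the fibration of $S^{2n+1}\setminus K$ over $S^1$ lets one slide $F$ around the circle and identify it with its complementary pushoff; combining this with Alexander duality, the matrix of $L$ (viewed as a map $H_n(F)\to H_n(F)^\#$) is an isomorphism, hence $\det L = \pm 1$. The next step is to show that $L$ determines the pair $(F,h)$ up to isomorphism. From Theorem~\ref{thm:SLH} (which applies equally in the fibered-knot setting, not just in singularity theory), $L$ determines the intersection form $S = -L-(-1)^nL^t$ and the monodromy matrix $H=(-1)^{n+1}L^{-1}L^t$. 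Since $F$ is an $(n-1)$-connected $2n$-manifold with boundary, for $n \geq 3$ it admits a handle decomposition with one $0$-handle and $\mu$ $n$-handles; by Wall's classification of such handlebodies, the diffeomorphism type of $F$ is determined by the intersection form together with the $n$-handle framing data, and this combined data is precisely what the Seifert matrix records (off-diagonal entries are linking numbers of attaching spheres, diagonal entries give the framings).

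The third step, recovering the monodromy, uses that an algebraic automorphism of $H_n(F)$ preserving the intersection form is realized by a diffeomorphism of $F$ fixing $\partial F$ pointwise, and uniquely up to isotopy, when $n \geq 3$; this follows from Wall-type results on pseudo-isotopy/diffeomorphism classification of $(n-1)$-connected manifolds in the stable range. Hence the geometric monodromy $h$ is determined by $H$, which is determined by $L$. Conversely, from $(F,h)$ one recovers the knot as the binding of the open book
\[
\mathrm{OB}(F,h) \;=\; \bigl(F\times[0,1]\bigr)\big/\!\sim \;\cup\; \bigl(\partial F \times D^2\bigr),
\]
where $(x,1)\sim (h(x),0)$ on $F\times[0,1]$, with $K=\partial F\times\{0\}$ as the knot. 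So isotopy classes of simple fibered knots correspond bijectively to equivalence classes of pairs $(F,h)$, which correspond to equivalence classes of unimodular Seifert forms.

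Finally, for surjectivity, given a unimodular bilinear form $L$ on $\ZZ^\mu$, I would construct $F$ by attaching $\mu$ $n$-handles to $D^{2n}$ with attaching data (linking and framing) dictated by the entries of $L$; for $n \geq 3$ any such configuration can be geometrically realized. Then $H=(-1)^{n+1}L^{-1}L^t$ preserves the intersection form and by the realization result of the previous paragraph is implemented by a diffeomorphism $h$ of $F$ rel $\partial F$. The open book $\mathrm{OB}(F,h)$ produces a simply-connected closed $(2n+1)$-manifold containing $K=\partial F$ as a fibered knot, and a direct Mayer--Vietoris computation on the open book, using unimodularity of $L$ to conclude that the ambient homology vanishes, combined with the generalized Poincar\'e conjecture for $n\geq 3$, identifies the ambient manifold with $S^{2n+1}$. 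The main obstacle will be this last step: proving that the resulting manifold is genuinely $S^{2n+1}$ (and not merely a homotopy sphere) requires the $h$-cobordism theorem and careful bookkeeping of the framings, and is exactly where the dimension hypothesis $n\geq 3$ is indispensable; the realization of automorphisms by diffeomorphisms in Step 3 is a comparable difficulty that likewise fails without this stable-range assumption.
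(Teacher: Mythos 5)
The paper offers no proof of this theorem: it is quoted verbatim from Durfee's article [Du74] as background for Corollary~\ref{cor:D^*}, so the only possible comparison is with Durfee's original argument. Your outline does follow that argument's actual architecture (unimodularity of $L$ from the fibration plus Alexander duality; classification of the $(n-1)$-connected page as a Wall handlebody embedded in $S^{2n+1}$ via the Seifert linking data; realization and rigidity of the monodromy; reconstruction by the open book), and the formula $H=(-1)^{n+1}L^{-1}L^t$ from Theorem~\ref{thm:SLH} is indeed how the algebraic monodromy is recovered. So as a strategy sketch it is faithful to the source the paper relies on.

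Two points, however, are genuinely wrong or glossed. First, the claim that $(n-1)$-connectedness of the fiber ``can be arranged by Morse-theoretic surgery'' is false: the fiber of a fibered knot is determined up to homotopy by the knot (it is a fundamental domain of the infinite cyclic cover), so simplicity is a \emph{hypothesis} on the knot, not a normal form one can impose. Durfee's theorem is stated for \emph{simple} fibered knots; the survey suppresses the adjective because links of isolated hypersurface singularities automatically have $(n-1)$-connected Milnor fibers. Second, your Step 3 asserts that an automorphism of $H_n(F)$ preserving the intersection form is realized by a diffeomorphism rel boundary \emph{uniquely up to isotopy}. What Wall-type realization actually requires is preservation of the full tangential data (intersection form together with the normal-bundle/framing map), which is why one must use that $H$ preserves $L$ itself, not merely $S=-L-(-1)^nL^t$; and uniqueness up to isotopy fails in general, the indeterminacy being governed by $\pi_0\,{\rm Diff}(D^{2n},\partial)\cong\Theta_{2n+1}$. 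This indeterminacy is not a nuisance separate from your Step 4 worry about landing in a homotopy sphere rather than $S^{2n+1}$ --- it is exactly the freedom one exploits to adjust the open book by an exotic twist so that the ambient manifold becomes the standard sphere. Treating ``uniqueness of the monodromy'' and ``the total space is $S^{2n+1}$'' as independent facts, as you do, leaves the argument circular at precisely the point where $n\geq 3$ and the $h$-cobordism theorem enter; Durfee's proof resolves the two issues simultaneously.
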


In view of the preceding remarks and Theorem~\ref{thm:SLH} we obtain the following corollary.

\begin{corollary} \label{cor:D^*}
For $n \neq 2$ the set $\calD$ of $f: (\CC^{n+1},0) \to (\CC,0)$ determines $f$ up to topological equivalence.
\end{corollary}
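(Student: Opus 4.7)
The plan is to pass from a diagram $D \in \calD$ back to the topological type of $f$ through the chain
\[
\text{diagram } D \;\longrightarrow\; \text{intersection matrix } S \;\longrightarrow\; \text{Seifert matrix } L \;\longrightarrow\; \text{link } K \;\longrightarrow\; \text{topological type of } f.
\]
First I would note that a Coxeter-Dynkin diagram $D \in \calD$ is, by definition, a complete record of the intersection matrix $S=(\langle \delta_i,\delta_j\rangle)$ with respect to some distinguished basis $(\delta_1,\ldots,\delta_\mu)$: the diagonal entries are fixed by Proposition~\ref{prop:self}, and the off-diagonal entries are recorded by the weighted edges. So $\calD$ determines the set of intersection matrices (up to relabeling) realized by distinguished bases.

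Next I would use Theorem~\ref{thm:SLH} to recover the Seifert matrix $L$ with respect to the same distinguished basis. Indeed, part (i) tells us that $L$ is lower triangular with prescribed diagonal $-(-1)^{n(n-1)/2}$, while part (ii) says $S = -L-(-1)^n L^t$. Combining these two constraints, one reads off
\[
L_{ii} = -(-1)^{n(n-1)/2}, \qquad L_{ij} = -S_{ij} \text{ for } i>j, \qquad L_{ij}=0 \text{ for } i<j,
\]
so $L$ is uniquely determined by $S$. Thus $D \in \calD$ determines $L$.

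For $n \geq 3$, I would now appeal directly to Durfee's Theorem~\ref{thm:Durfee}: the Seifert matrix $L$ determines the fibered knot $K = f^{-1}(0)\cap S_{\varepsilon}^{2n+1}$ up to isotopy in $S^{2n+1}$. Since, by Milnor's conical structure theorem cited above, the germ $(f^{-1}(0),0)$ is the cone on $K$, the isotopy class of $K$ determines the local embedding of $f^{-1}(0)$, i.e.\ the topological type of $f$.

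For the remaining case $n=1$, Durfee's theorem is not available, and I would reduce to $n=3$ by stabilization: set $\tilde f(z,w_1,w_2):=f(z)+w_1^2+w_2^2$. Theorem~\ref{theo:stab} (with $n=1$, $m=2$) shows that an intersection matrix of $\tilde f$ with respect to a distinguished basis is obtained from that of $f$ simply by multiplying the off-diagonal entries by $(-1)^{(n+1)m+m(m-1)/2}=-1$ (and adjusting the diagonal by $\langle\tilde\delta_i,\tilde\delta_i\rangle=2$). Hence $\calD(\tilde f)$ is completely determined by $\calD(f)$. Applying the case $n=3$ already proved, $\tilde f$ is determined up to topological equivalence, and one concludes by invoking the classical fact that topological equivalence is preserved by (and reflected through) suspension by a sum of squares.

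The main obstacle will be the $n=1$ case, where one must justify the last step, namely that topological equivalence of the two-fold suspensions $\tilde f$ and $\tilde g$ implies topological equivalence of $f$ and $g$ themselves. The earlier links in the chain (reading off $S$ from $D$, inverting the triangular system to get $L$, applying Durfee) are formal once Theorems~\ref{thm:SLH} and~\ref{thm:Durfee} are in hand; the genuine content lies in the suspension step for curve singularities.
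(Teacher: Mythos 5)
For $n \geq 3$ your chain (diagram $\rightarrow$ intersection matrix $S$ $\rightarrow$ Seifert matrix $L$ via Theorem~\ref{thm:SLH} $\rightarrow$ isotopy class of the link via Theorem~\ref{thm:Durfee} $\rightarrow$ cone structure) is exactly the paper's argument, and your explicit inversion of the triangular system to recover $L$ from $S$ is correct.

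The genuine gap is the case $n=1$. The ``classical fact'' you invoke in the last step --- that topological equivalence is \emph{reflected} through suspension by a sum of squares --- is false, and in a way that destroys the whole reduction. Indeed, by Theorem~\ref{thm:Durfee} applied to $\tilde f = f + w_1^2 + w_2^2$ (now in the range $n=3$), the topological type of $\tilde f$ is determined by the equivalence class of its Seifert form alone, and by Deligne's formula $L_{f\oplus g}=(-1)^{(n+1)m}L_f\otimes L_g$ quoted in Sect.~\ref{sect:comp} this Seifert form is just $\pm L_f$. Hence $\tilde f$ and $\tilde g$ are topologically equivalent as soon as $f$ and $g$ have equivalent integral Seifert forms. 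But Du~Bois and Michel (J.~Algebraic Geom.~3 (1994)) constructed reduced plane curve germs with isomorphic integral Seifert forms and distinct topological types; so the topological type of the double suspension cannot remember the topological type of the curve, and no descent argument of the kind you propose can exist. This also shows why, for $n=1$, the corollary really needs the hypothesis of a common Coxeter-Dynkin diagram with respect to \emph{distinguished} bases (the full content of $\calD$), not merely the congruence class of $L$, which is all that survives the passage to $n=3$. The paper disposes of $n=1$ by appealing to Szczepanski's theorem \cite{Sz87}, whose proof for curves is a separate argument that does not factor through stabilization. (The remaining case $n=0$, which your proposal also leaves out, is trivial: $f$ is right equivalent to $z^{k+1}$ and $\mu$ determines $k$.)
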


This corollary was also proved by S.~Szczepanski \cite{Sz87}. Moreover, she showed in \cite{Sz89} the following theorem.

\begin{theorem}[Szczepanski] \label{thm:Sz}
Two singularities $f,g: (\CC^3,0) \to (\CC,0)$ are topologically equivalent if
\begin{itemize}
\item[{\rm (i)}] the singularities have a common Coxeter-Dynkin diagram with respect to distinguished bases, and
\item[{\rm (ii)}] the Milnor fibers have homeomorphic boundaries and the algebraic isomorphism of the Milnor lattices induced by the common Coxeter-Dynkin diagram is realized geometrically by either an inclusion of one Milnor fiber into the other or a homotopy equivalence of the Milnor fibers which induces a homeomorphism of the boundaries.
\end{itemize}
\end{theorem}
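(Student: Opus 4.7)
The approach is to reduce topological equivalence of $f,g:(\CC^3,0)\to(\CC,0)$ to equivalence of their Milnor open book decompositions of $S^5$, and then to assemble the required fiber-preserving homeomorphism from the two hypotheses. By Milnor's fibration theorem and the discussion preceding Corollary~\ref{cor:D^*}, $f$ and $g$ are topologically equivalent if and only if there exists a homeomorphism of pairs $\phi:(X_\eta^f,\partial X_\eta^f)\to(X_\eta^g,\partial X_\eta^g)$ such that $\phi\circ h^f$ is isotopic rel $\partial$ to $h^g\circ\phi$, where $h^f,h^g$ denote the geometric monodromies. The task is to construct such a $\phi$.

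Hypothesis~(i) combined with Theorem~\ref{thm:SLH} produces a Milnor-lattice isomorphism $\psi: M_f\to M_g$ that preserves the Seifert form, and hence also the intersection form and the classical monodromy. For $n\ge3$ this algebraic datum is already sufficient, via Durfee's argument, to give Corollary~\ref{cor:D^*}; but in dimension four the surgery-theoretic realization tools used there are unavailable, so one must realize $\psi$ geometrically by hand. This is exactly what hypothesis~(ii) is arranged to provide: either an inclusion of one Milnor fiber into the other or a homotopy equivalence of pairs inducing $\psi$ on $H_2$ and restricting to a homeomorphism of the boundary $3$-manifolds. In the inclusion case, a collar argument together with the equality of Euler characteristics, signatures, and boundaries (all forced by $\psi$) upgrades the inclusion to a homeomorphism of pairs. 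In the homotopy-equivalence case, one applies Freedman's topological classification of simply connected $4$-manifolds with boundary: since the intersection form is even for $n=2$, the Kirby–Siebenmann invariant is pinned down by the prescribed boundary homeomorphism, and the given homotopy equivalence of pairs promotes to a homeomorphism of pairs extending it.

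The main obstacle is the final step: showing that the $\phi$ so produced conjugates the geometric monodromies up to isotopy rel $\partial$, not merely at the level of homology. A natural strategy is to exploit the pseudo-periodic structure of $h^f$ and $h^g$ (after A'Campo and L\^e) adapted to an embedded resolution, and to invoke the Matsumoto–Montesinos-type classification of pseudo-periodic surface homeomorphisms up to isotopy, which reduces the problem to data on the boundary and on homology. Hypothesis~(ii) supplies precisely this data: $\phi|_{\partial}$ is a genuine homeomorphism conjugating $h^f|_{\partial}$ to $h^g|_{\partial}$ (because the Milnor fibration restricted to the link is recovered from the boundary open book), while $\psi$ conjugates the induced monodromies on $H_2$ by Theorem~\ref{thm:SLH}(iii). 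Making the transition from \emph{conjugate on homology and boundary} to \emph{isotopic rel $\partial$} in dimension four is the genuine difficulty, and is where the additional geometric input in hypothesis~(ii), going beyond just a common Coxeter–Dynkin diagram, becomes indispensable.
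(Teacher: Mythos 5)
This is one of the results the survey quotes without proof: the paper simply attributes the theorem to Szczepanski and refers to \cite{Sz89} for the argument, so there is no in-paper proof to compare against. Judged on its own terms, your sketch starts from the right place — it correctly identifies that the obstruction to running Durfee's argument (Theorem~\ref{thm:Durfee}, Corollary~\ref{cor:D^*}) when $n=2$ is the failure of the high-dimensional tools, and that hypothesis~(ii) is there precisely to substitute a geometric realization of the Seifert-form isomorphism coming from hypothesis~(i) via Theorem~\ref{thm:SLH}.

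However, the argument as written has a genuine gap, and in one place it is simply wrong. For $f,g:(\CC^3,0)\to(\CC,0)$ we have $n=2$, so the Milnor fiber is a compact $4$-manifold with boundary and the geometric monodromy is a self-homeomorphism of a $4$-manifold. The Matsumoto--Montesinos classification of pseudo-periodic homeomorphisms, which you invoke to pass from ``conjugate on homology and on the boundary'' to ``isotopic rel boundary,'' is a theory of \emph{surface} homeomorphisms; it applies to the $2$-dimensional fibers of plane curve singularities ($n=1$) and has no analogue for $4$-dimensional fibers. Since you yourself flag this passage as ``the genuine difficulty'' and offer only this (inapplicable) strategy for it, the proof does not close. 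The Freedman step is also asserted too quickly: the classification of simply connected compact $4$-manifolds \emph{with boundary} by intersection form plus a prescribed boundary homeomorphism is not automatic when the boundary is an arbitrary $3$-manifold (the link of a surface singularity is in general not a homology sphere), and promoting a homotopy equivalence of pairs to a homeomorphism of pairs in this setting requires additional input. Szczepanski's actual proof in \cite{Sz89} is built on a different, more hands-on analysis of the Milnor fibration in this borderline dimension; if you want to complete your outline you would need to replace the surface-mapping-class step by an argument genuinely adapted to $4$-dimensional fibers, which is exactly the content of her paper.
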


There is also the notion of $\mu$-homotopy or $\mu$-equivalence (see \cite{Br88}).

\begin{definition} Two singularities $f_0,f_1: (\CC^{n+1},0) \to (\CC,0)$ are {\em $\mu$-equiva\-lent}\index{$\mu$-equivalent} if there is a family $f_t : (\CC^{n+1},0) \to (\CC,0)$ of analytic function germs with isolated singularities at the origin continuously depending on the parameter $t \in [0,1]$ with constant Milnor number $\mu(f_t)$.
\end{definition}

L\^e D\~ung Tr\'ang and C.~P.~Ramanujam \cite{LR76} proved the following theorem.

\begin{theorem}[L\^e-Ramanujam] If $n \neq 2$, then $\mu$-equivalent singularities are topologically equivalent.
\end{theorem}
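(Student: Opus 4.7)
The plan is to upgrade the constant Milnor number condition to a topological triviality of the embedding $f_t^{-1}(0) \subset \CC^{n+1}$. First I would fix uniform tubular data: using semicontinuity of $\mu$ together with the hypothesis $\mu(f_t) \equiv \mu$, one shows that there exist $\eps_0 > 0$ and $\eta_0 > 0$ such that for every $t \in [0,1]$ the sphere $S_{\eps_0}^{2n+1}$ intersects $f_t^{-1}(0)$ transversely and the fibers $f_t^{-1}(s)$ for $s \in \overline{\Delta}_{\eta_0}$ meet $B_{\eps_0}$ transversely. The critical point $0$ thus stays isolated in a common Milnor ball, and no ``vanishing cycles'' are lost or gained along the family. (The $\mu$-constant hypothesis is essential here: without it the critical points could drift out of the fixed ball.)

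Next I would build a relative Milnor fibration. Set $\mathcal{X} = \{(z,s,t) : |z| \le \eps_0, |f_t(z)-s|\le\eta_0\}$ and consider the map $\mathcal{X} \to \overline{\Delta}_{\eta_0} \times [0,1]$, $(z,s,t) \mapsto (f_t(z),t)$. Over the punctured disc times $[0,1]$ this is a proper submersion on manifolds with corners, and Thom's first isotopy lemma produces a fiber-preserving topological trivialization. In particular the Milnor fibers $X_\eta(t)$ are homeomorphic to $X_\eta(0)$ via a homeomorphism carrying $\partial X_\eta(t)$ onto $\partial X_\eta(0)$. The same argument trivializes the boundary link fibration, yielding a homeomorphism of pairs $(S_{\eps_0}^{2n+1}\setminus K_t,\,\partial X_\eta(t)) \cong (S_{\eps_0}^{2n+1}\setminus K_0,\,\partial X_\eta(0))$ compatible with the Milnor fibration structure.

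To promote this to an embedded topological equivalence of $f_t^{-1}(0)$, I would analyze the complement of an open tube around $K_t$ in $S_{\eps_0}^{2n+1}$. This complement is diffeomorphic to the mapping torus of the geometric monodromy acting on $X_\eta(t)$. By the relative trivialization, these mapping tori are homeomorphic for $t = 0$ and $t = 1$. Applying the fibration isomorphism carefully, the problem reduces to showing that a certain h-cobordism between $X_\eta(0)$ and $X_\eta(1)$ (built from the family over a path in the base) is a product. Since $X_\eta(t)$ is simply connected (a bouquet of $n$-spheres) and has real dimension $2n$, for $n \ge 3$ the cobordism is simply connected of dimension $2n+1 \ge 7$, and Smale's h-cobordism theorem forces it to be a product. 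Re-assembling the product structure on the complement of $K_t$ with the standard collar structure near $K_t$ yields the desired homeomorphism of neighborhoods sending $f_0^{-1}(0)$ to $f_1^{-1}(0)$.

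The case $n=1$ (plane curves) has to be handled separately: here one invokes L\^e's earlier result that $\mu$-constant families of plane curve singularities are equisingular in Zariski's sense, which directly yields embedded topological triviality without h-cobordism input. The essential obstacle that forces the hypothesis $n \ne 2$ is precisely that the h-cobordism theorem fails in dimension four: when $n=2$, the cobordism built above sits in dimension $5$ with $4$-dimensional ends, and simple connectivity is no longer enough to conclude triviality. This is the reason the surface singularity case ($n=2$) is a notorious open problem and is explicitly excluded from the statement.
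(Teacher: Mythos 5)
Your argument is, in outline, the original geometric proof of L\^e and Ramanujam \cite{LR76}: use the constancy of $\mu$ to fix a uniform Milnor radius, trivialize the family of Milnor fibrations, and invoke Smale's $h$-cobordism theorem (hence the restriction to $n\geq 3$, with plane curves handled separately via equisingularity) to upgrade this to an embedded homeomorphism. That route is correct, but be aware that the $h$-cobordism step is where essentially all the work lies and your description of it is the loosest: in the standard argument one applies the theorem not to a cobordism between the Milnor fibers of $f_0$ and $f_1$, but to the piece of $f_t^{-1}(0)$ (resp.\ of the link complement) lying between the uniform sphere $S_{\eps_0}$ and a genuinely small Milnor sphere for $f_t$, so as to show that $S_{\eps_0}$ is already a Milnor sphere for every $t$; constancy of $\mu$ supplies the homology condition for an $h$-cobordism, and $n\geq 3$ supplies both simple connectivity of the link and the dimension bound. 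The paper takes a genuinely different, lattice-theoretic route: by Proposition~\ref{prop:Gab} (Gabrielov), $\mu$-equivalent singularities admit distinguished bases with identical Coxeter-Dynkin diagrams; by Theorem~\ref{thm:SLH} the diagram determines the Seifert matrix; and by Durfee's Theorem~\ref{thm:Durfee} the Seifert form classifies the fibered link up to isotopy for $n\geq 3$, so Corollary~\ref{cor:D^*} yields topological equivalence because $f^{-1}(0)$ is locally the cone over its link. Your approach is self-contained differential topology and is what one must do to prove the theorem from scratch; the paper's approach buys the stronger and cleaner statement that the discrete invariant $\calD$ alone determines the topological type, at the price of resting on Gabrielov's and Durfee's theorems, which themselves encode comparable geometric input. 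Both routes exclude $n=2$ for the reason you identify: the failure of the relevant $h$-cobordism arguments in low dimension.
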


The following proposition was proved by Gabrielov \cite[Proposition~1]{Ga74b}.
\begin{proposition}[Gabrielov] \label{prop:Gab}
For two $\mu$-equivalent singularities there exist distinguished bases whose Coxeter-Dynkin diagrams coincide.
\end{proposition}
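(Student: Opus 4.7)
The plan is to construct a continuous family of distinguished bases along the $\mu$-constant deformation $f_t$ and then invoke integrality of intersection numbers to conclude that the Coxeter-Dynkin diagram is constant in $t$.

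First, I would set up a $\mu$-parameter unfolding $F(z,t,u)$ so that for each $t$ the restriction $F(\cdot,t,\cdot)$ is a versal unfolding of $f_t$ with base $U_t\subset\CC^\mu$ depending continuously on $t$. By Theorem~\ref{thm:irr}, the bifurcation locus $\mathrm{Bif}_t\subset U_t$ is a proper real analytic subvariety, so the total space $W=\bigcup_t \{t\}\times(U_t\setminus\mathrm{Bif}_t)$ is path-connected, and one can pick a continuous selection $u:[0,1]\to\CC^\mu$ with $(t,u(t))\in W$. Then $f_{t,u(t)}$ is a Morse function for every $t$. Using the $\mu$-constancy of the family, I would next choose a single Milnor ball $B_\eps$ and a disc $\Delta\subset\CC$ valid for all $f_t$, with the $\mu$ critical values $s_1(t),\dots,s_\mu(t)$ of $f_{t,u(t)}$ depending continuously on $t$ and remaining in the interior of $\Delta$, and fix $\eta\in\partial\Delta$ avoiding every $s_i(t)$.

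Since $\eta$ is a regular value of $f_{t,u(t)}$ for all $t$, Ehresmann's theorem applied to the submersion $(z,t)\mapsto (f_{t,u(t)}(z),t)$ trivializes the family of Milnor fibres $X_{t,\eta}$ over $[0,1]$, giving a canonical identification of the Milnor lattices $M_{f_t}$ with one fixed lattice $M$. Now start with a distinguished path system $(\gamma_1(0),\dots,\gamma_\mu(0))$ for $f_{0,u(0)}$, yielding a distinguished basis $(\delta_1(0),\dots,\delta_\mu(0))$ of $M_{f_0}$, and deform the endpoints along the continuous curves $t\mapsto s_i(t)$, adjusting the paths so that they remain non-selfintersecting, meet only at $\eta$, and arrive at $\eta$ in the correct clockwise order. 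Since the curves $t\mapsto s_i(t)$ are finitely many disjoint continuous arcs in $\Delta$, such a deformation exists. The resulting vanishing cycles $\delta_i(t)\in M$ then vary continuously in $t$ and take values in a discrete lattice, hence are independent of $t$.

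Consequently, at $t=1$ the system $(\gamma_1(1),\dots,\gamma_\mu(1))$ is a distinguished path system for the morsification $f_{1,u(1)}$ of $f_1$, and its associated distinguished basis $(\delta_1(1),\dots,\delta_\mu(1))$ of $M_{f_1}$ satisfies $\langle\delta_i(1),\delta_j(1)\rangle=\langle\delta_i(0),\delta_j(0)\rangle$ for all $i,j$, so the two Coxeter-Dynkin diagrams coincide. The main obstacle is the continuous transport of the distinguished path system: one must keep all three defining conditions (simplicity, pairwise disjointness outside $\eta$, and correct clockwise ordering at $\eta$) preserved for every $t\in[0,1]$ while the critical values wander in $\Delta$ and possibly cross some of the original paths, each crossing forcing a local rerouting that must not disturb the global distinguished structure. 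A secondary technical point is the simultaneous choice of the unfolding $F$ and the Milnor data $(B_\eps,\Delta,\eta)$ uniformly in $t$, which is standard given $\mu$-constancy but requires care.
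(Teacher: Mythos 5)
The paper itself gives no proof of this proposition---it only cites Gabrielov \cite{Ga74b}, whose argument is exactly of the type you propose: local constancy of the set of Coxeter--Dynkin diagrams along a $\mu$-constant family. So your route is the standard one and its skeleton is sound, but two points need tightening. The step you rightly single out as the main obstacle---transporting the distinguished path system while the critical values move---should not be handled by ad hoc ``local reroutings'', and your stated justification (``the curves $t\mapsto s_i(t)$ are finitely many disjoint continuous arcs in $\Delta$'') is not quite right: the images of these arcs in $\Delta$ may very well intersect; only the configurations at each fixed time are disjoint. The clean fix is to observe that $t\mapsto(s_1(t),\ldots,s_\mu(t))$ is a path in the configuration space of $\mu$ distinct points of the open disc, to extend this motion via the isotopy extension theorem to an ambient isotopy $(\Phi_t)_{t\in[0,1]}$ of $\Delta$ that is the identity near $\partial\Delta$, and to set $\gamma_i(t):=\Phi_t\circ\gamma_i(0)$. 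Embeddedness, pairwise disjointness away from $\eta$, and the clockwise arrival order at $\eta$ are then preserved automatically, and the induced vanishing cycles vary continuously in the local system over the contractible base $[0,1]$, hence are constant, which is your conclusion.

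The second point concerns the choices you make uniformly in $t$: a single Milnor ball, a single disc $\Delta$ with a single base point $\eta$, and a continuous section $t\mapsto u(t)$ avoiding the bifurcation set. These are more than you need, and they are the genuinely delicate part of your set-up (a uniform Milnor radius for the whole family, and a global section of the complement of the bifurcation loci, both require arguments you only sketch). It is cleaner to prove the local statement: for every $t_0$ there is a neighborhood of $t_0$ on which $\calD(f_t)=\calD(f_{t_0})$---near $t_0$ all the above choices can be made constant in $t$, and the transport argument applies verbatim; connectedness of $[0,1]$ then gives $\calD(f_0)=\calD(f_1)$, which is stronger than the proposition. Note that this full local equality is what one should aim for: the Tyurina--Siersma theorem quoted in the paper yields only the one-sided inclusion $\calD(f_t)\subseteq\calD(f_{t_0})$ for $t$ near $t_0$, and such one-sided inclusions cannot be chained along $[0,1]$ to produce a diagram common to $f_0$ and $f_1$.
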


Using this proposition, one obtains the L\^e-Ramanujam theorem as a consequence of Corollary~\ref{cor:D^*}. Moreover, one can derive from Theorem~\ref{thm:Sz} a L\^e-Ramanujam theorem for $n=2$, see \cite{Sz89}.

Now let $f:(\CC^{n+1},0)\to(\CC,0)$ be an isolated singularity satisfying the conditions of Theorem~\ref{thm:arithmetic}. It follows from Theorem~\ref{thm:arithmetic} that the invariants $\Gamma$ and $\Lambda^*$ are completely determined by $M$. The author \cite{Eb81} has found examples of pairs of singularities (e.g.\ the bimodal singularities $Z_{17}$ and $Q_{17}$ in Arnold's notation) which have the same Coxeter-Dynkin diagrams with respect to weakly distinguished bases and the invariants $M$, $\Gamma$, and $\Lambda^*$ are the same, but the invariants $\calB^*$ and $\calD^*$ are different, the classical monodromy operators are not conjugate to each other,  and the singularities are not topologically equivalent.

We conclude the article with some open questions which were posed to the author by late Brieskorn (around 1982?). We keep the condition that $f:(\CC^{n+1},0)\to(\CC,0)$ is a singularity satisfying the conditions of Theorem~\ref{thm:arithmetic}. Let $n \equiv 2 \ \mbox{mod} \, 4$ and let $\mu$ be the Milnor number of $f$.

\begin{problem}[Brieskorn] Let $M$ be the Milnor lattice (of rank $\mu$) and $\Gamma$ be the monodromy group of $f$. Let 
\[
\Lambda := \{ v \in M \, | \, \langle v , v \rangle = -2 \}.
\]
Then $\Gamma$ acts on $\Lambda$. Are there only finitely many orbits?
\end{problem}

\begin{problem}[Brieskorn] 
Let 
\[
\calB_0 := \{ (e_1, \ldots , e_\mu) \in \Lambda^\mu \, | \, \langle e_1, \ldots , e_\mu \rangle_\ZZ=M \}.
\]
Then the group ${\rm Br}_\mu^\rtimes$ acts on $\calB_0$. Are there only finitely many orbits? Alternatively, one can consider the set 
\[
\widetilde{\calB}_0:= \{ (e_1, \ldots , e_\mu) \in (\Lambda^*)^\mu \, | \, \langle e_1, \ldots , e_\mu \rangle_\ZZ=M \}.
\]
\end{problem}

\begin{problem}[Brieskorn] \label{prob:3}
The group $\Gamma$ acts on $\calB_0$ (or $\widetilde{\calB}_0$) by 
\[
\gamma(e_1, \ldots , e_\mu)=(\gamma e_1, \ldots , \gamma e_\mu).
\]
This action commutes with the action of ${\rm Br}_\mu^\rtimes$. Are there only finitely many $\Gamma$-equivalence classes of ${\rm Br}_\mu^\rtimes$-orbits?
\end{problem}
Very little is known about these problems. The answers to these questions are trivially yes for the simple singularities, since the sets $\Lambda$, $\calB_0$, and $\widetilde{\calB}_0$ are finite in this case. We have $\Lambda=\Lambda^*$ (and hence there is only one $\Gamma$-orbit) for the simple, simple elliptic, and minimal hyperbolic singularities (for the latter ones see \cite[Proposition~5.5.1]{Eb87}). 

An element $c \in \Gamma$ for which there exists a basis $(e_1, \ldots, e_\mu) \in \calB_0$ such that $c=s_{e_1} \cdots s_{e_\mu}$ is called a {\em quasi Coxeter element}\index{quasi Coxeter element}\index{Coxeter element!quasi}.

\begin{problem}[Brieskorn] \label{prob:4}
Let $c \in \Gamma$ be a quasi Coxeter element and let
\[ \calB_{0,c} := \{ (e_1, \ldots , e_\mu) \in \calB_0 \, | \, s_{e_1} \cdots s_{e_\mu}=c \}.
\]
The set  $\calB_{0,c}$ is invariant under the action of the group ${\rm Br}_\mu^\rtimes$. What is the relation between the orbits of ${\rm Br}_\mu^\rtimes$ on $\calB_0$ and the sets $\calB_{0,c}$?
\end{problem}

For the simple singularities, the quasi Coxeter elements were determined up to conjugacy by E.~Voigt \cite{Vo85a, Vo85b} and he showed that the group ${\rm Br}_\mu^\rtimes$ acts transitively on $\calB_{0,c}$ for each quasi Coxeter element $c$. For $c$ being the classical monodromy operator, it is known for the simple (Corollary~\ref{cor:Deligne}), the simple elliptic \cite{Kl86}, and the hyperbolic singularities \cite{BWY19} that the group ${\rm Br}_\mu^\rtimes$ acts transitively on $\calB_{0,c}$ (see Sect.~\ref{sect:special} above). To the author's knowledge, this is all what is known about Problem~\ref{prob:4}.


\begin{sloppypar}

\end{sloppypar}

\printindex

\end{document}